\definecolor{darkblue}{rgb}{0,0,0.4}
\DeclareMathOperator*{\colim}{colim}
\newtheorem{theorem}{Theorem}[section] 
\newtheorem{lemma}[theorem]{Lemma}     
\newtheorem{corollary}[theorem]{Corollary}
\newtheorem{proposition}[theorem]{Proposition}
\newtheorem{remark}[theorem]{Remark}
\newtheorem{definition}[theorem]{Definition}
\newtheorem{question}[theorem]{Question}
\newtheorem{example}[theorem]{Example}
\newtheorem{alphatheorem}{Theorem}
\newcommand{\fX}{\mathfrak{X}}
\newcommand{\fQ}{\mathfrak{Q}}
\newcommand{\bbC}{\mathbb{C}}
\newcommand{\bbF}{\mathbb{F}}
\newcommand{\C}{\mathbb{C}}
\newcommand{\R}{\mathbb{R}}
\newcommand{\bbR}{\mathbb{R}}
\newcommand{\Z}{\mathbb{Z}}
\newcommand{\bbZ}{\mathbb{Z}}
\newcommand{\ignore}[1]{}
\newcommand{\leqs}{\leqslant}
\newcommand{\geqs}{\geqslant}
\newcommand{\heq}{\simeq}
\newcommand{\maps}{\longrightarrow}
\newcommand{\injects}{\hookrightarrow}
\newcommand{\isom}{\cong}
\newcommand{\cross}{\times}
\newcommand{\wt}[1]{\widetilde{#1}}
\newcommand{\homeo}{\isom}
\newcommand{\Rdef}{R^{\mathrm{def}}}
\newcommand{\Hom}{\mathrm{Hom}}
\newcommand{\GL}{\mathrm{GL}}
\newcommand{\Un}{\mathrm{U}}
\newcommand{\SP}{\mathrm{SP}}
\newcommand{\SUn}{\mathrm{SU}}
\newcommand{\GLn}{\mathrm{GL}}
\newcommand{\SLn}{\mathrm{SL}}
\newcommand{\K}{K^{\mathrm{def}}}
\newcommand{\Map}{\mathrm{Map}}
\newcommand{\Id}{\mathrm{Id}}
\newcommand{\xmaps}{\xrightarrow}
\newcommand{\srm}[1]{\stackrel{#1}{\maps}}
\newcommand{\srt}[1]{\stackrel{#1}{\to}}
\newcommand{\goesto}{\mapsto}
\newcommand{\nd}{\noindent}
\newcommand{\ol}[1]{\overline{#1}}
\def\co{\colon\thinspace}
\newcommand{\Img}{\mathrm{Im}}
\newcommand{\e}{\emph}
\newcommand{\quot}{/\!\!/}
\newcommand{\Si}{\Sigma}
\newcommand{\bZ}{\mathbb{Z}}
\newcommand{\bR}{\mathbb{R}}
\newcommand{\bC}{\mathbb{C}}
\newcommand{\fh}{\mathfrak{h}}
\newcommand{\tG}{\tilde{G}}
\newcommand{\tK}{\tilde{K}}
\newcommand{\ta}{\tilde{a}}
\newcommand{\tb}{\tilde{b}}
\newcommand{\rahl}{\rightarrow}
\newcommand{\Ker}{\mathrm{Ker}}
\newcommand{\holG}{\Hom(\pi_1(\Si),G)}
\newcommand{\holGss}{\Hom(\pi_1(\Si),D)}
\newcommand{\holK}{\Hom(\pi_1(\Si),K)}
\newcommand{\ab}{a_1,b_1,\ldots, a_\ell,b_\ell}
\newcommand{\tabab}{\prod_{i=1}^\ell[\ta_i,\tb_i]}
\title[Covering spaces of character varieties]{Covering spaces of character varieties}
\date{\today}
\author[S. Lawton]{Sean Lawton}
\address{Department of Mathematical Sciences, George Mason University,
4400 University Drive,
Fairfax, Virginia  22030, USA}
\email{slawton3@gmu.edu}
\author[D. Ramras]{Daniel Ramras\\ \\
{\tiny With an Appendix by Nan-Kuo Ho and Chiu-Chu Melissa Liu}}
\address{Daniel Ramras, Department of Mathematical Sciences, Indiana University-Purdue University Indianapolis, 402 N. Blackford, LD 270, 
Indianapolis, IN 46202, USA}
\email{dramras@math.iupui.edu}
\address{Nan-Kuo Ho,
Department of Mathematics, National Tsing Hua University and
National Center for Theoretical Sciences, Hsinchu 300, Taiwan}
\email{nankuo@math.nthu.edu.tw}
\address{Chiu-Chu Melissa Liu,
Department of Mathematics, Columbia University,
2990 Broadway, New York, NY 10027, USA}
\email{ccliu@math.columbia.edu}
\subjclass[2010]{57M10, 14D20, 32G13, 14L30}
\keywords{character variety, moduli space, covering, path components, fundamental group, universal cover, properly discontinuous action}
\thanks{The first author was partially supported by the Simons Foundation (\#245642) and the NSF-DMS (\#1309376). The second author was partially supported by the Simons Foundation (\# 279007).}
\begin{document}

\begin{abstract}
Let $\Gamma$ be a finitely generated discrete group.
Given a covering map $H\to G$ of Lie groups with $G$ either compact or complex reductive, there is an induced covering map  
$\Hom(\Gamma, H)\to \Hom(\Gamma, G)$.  We show that  when $\pi_1 (G)$ is torsion-free
and $\Gamma$ is free, free Abelian, or the fundamental group of a closed Riemann surface $M^g$, 
this map induces a covering map between the corresponding moduli spaces of representations.  
We give conditions under which this map is actually the \e{universal} covering, 
leading to new information regarding  fundamental groups of these moduli spaces.  As an application,
we show that for $g\geqs 1$, the stable moduli space $\Hom(\pi_1 M^g, \SUn)/\SUn$ is homotopy equivalent to $\bbC \textrm{P}^\infty$.

In the Appendix, Ho and Liu show that $\pi_0 \Hom( \pi_1 M^g, G)$ and $\pi_1 [G,G]$ are in bijective correspondence for all complex connected reductive Lie groups $G$.
\end{abstract}

\maketitle

\section{Introduction}

Let $\Gamma$ be a finitely generated discrete group, and let $G$ be a Lie group.  In this article, our main object of interest is the fundamental group of the moduli space of representations $$\fX_\Gamma(G)=\Hom(\Gamma, G)\quot G.$$  When $G$ is compact, this quotient can be interpreted as the point-set quotient space for the conjugation action of $G$.  For non-compact $G$, we define $\fX_\Gamma (G)$ to be the \e{polystable quotient}, which is the subspace of the point-set quotient consisting of closed points.  When $G$ is a complex reductive algebraic group, this subspace is homeomorphic to the GIT quotient, when the latter is equipped with the Euclidean topology (see \cite{FlLaAbelian} for a proof, and \cite{Do} for generalities on GIT). Whenever $G$ is the real-locus of a complex reductive algebraic group defined over $\R$ (a situation including both compact Lie groups and complex reductive algebraic groups)
the polystable quotient is a real semi-algebraic set \cite{RiSl}, and thus deformation retracts to a finite simplicial 
complex~\cite[Corollary 9.3.7 and Remark 9.3.8]{BCR}; hence its fundamental group is finitely presented.  We will also consider the point-set quotient space for the conjugation action of $G$, denoted by $\fQ_{\Gamma}(G)$.

Here is our main theorem (Theorem~\ref{prop-discont}).  Recall that a covering space is normal (or regular) if its group of deck transformations acts transitively on every fiber.
\begin{alphatheorem}
Let $G$ be either a connected reductive algebraic group over $\C$, or a compact connected Lie group, and assume that $\pi_1 (G)$ is torsion-free.   Let $p\co H\to G$ be a covering homomorphism.
Then the induced map $p_* \co \fQ_\Gamma (H) \to \fQ_\Gamma (G)$ is a normal covering map {\rm (}onto its image{\rm )} with structure group
$\Hom(\Gamma, \ker(p))$, and restricts to a normal covering map $p_* \co \fX_\Gamma (H) \to \fX_\Gamma (G)$ {\rm (}onto its image{\rm )}, with the same structure group.
\end{alphatheorem}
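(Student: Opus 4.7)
The plan is to realise $p_*$ as the orbit map of a free, properly discontinuous action by $A := \Hom(\Gamma, \ker p)$. Because $\ker p$ is discrete and central in $H$, the formula $(\phi \cdot \rho)(\gamma) := \phi(\gamma)\rho(\gamma)$ defines an action of $A$ on $\Hom(\Gamma, H)$ that commutes with $H$-conjugation and hence descends to actions on $\fQ_\Gamma(H)$ and $\fX_\Gamma(H)$. First, I would verify that the fibres of $p_*$ coincide with $A$-orbits: given $[\rho_1], [\rho_2]$ with $g\,p\rho_1\,g^{-1} = p\rho_2$ for some $g \in G$, surjectivity of $p$ supplies a lift $h \in p^{-1}(g)$, and then $\phi(\gamma) := \rho_2(\gamma)\,(h\rho_1(\gamma)h^{-1})^{-1}$ is a homomorphism into $\ker p$ (using centrality) satisfying $\rho_2 = \phi \cdot (h\rho_1 h^{-1})$, so $[\rho_2] = [\phi \cdot \rho_1]$ on both quotients.

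Next I would establish proper discontinuity. Evaluation at a finite generating set $\gamma_1,\ldots,\gamma_n$ of $\Gamma$ embeds $A$ into the discrete group $(\ker p)^n$, so $A$ itself is discrete. The action on $\Hom(\Gamma, H)$ is free by cancellation; and if $K \subset \Hom(\Gamma, H)$ is compact and $\phi \cdot K \cap K \neq \emptyset$, then each $\phi(\gamma_i)$ lies in a fixed compact subset of $H$ that meets the discrete set $\ker p$ in only finitely many points, so only finitely many $\phi$ translate $K$ into itself. To push proper discontinuity down to $\fX_\Gamma(H)$ and $\fQ_\Gamma(H)$ I would use local slice models for the $H$-action --- Luna's \'etale slice theorem in the complex reductive case and the standard slice theorem for compact group actions in the compact case. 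Since the $A$-action commutes with the $H$-action, an $H$-invariant slice at a polystable representation is $A$-invariant, and proper discontinuity on the quotient reduces to the corresponding assertion on the slice.

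The crux is freeness of the $A$-action on the quotients. Suppose $\phi \in A$ fixes $[\rho]$, so $h\rho h^{-1} = \phi \cdot \rho$ for some $h \in H$. Then $p(h) \in Z_G(p\rho(\Gamma))$, and a path in $Z_G(p\rho(\Gamma))^0$ from $e$ to $p(h)$ lifts to a path $h(t)$ in $H$ starting at $e$, along which the assignment $\gamma \mapsto h(t)\rho(\gamma)h(t)^{-1}\rho(\gamma)^{-1}$ is a continuous path in the discrete group $\ker p$ starting at $e$, hence constantly trivial. Thus $h(t) \in Z_H(\rho(\Gamma))$, so the $A$-stabiliser of $[\rho]$ is a quotient of the component group $\pi_0 Z_G(p\rho(\Gamma))$, which is finite because centralisers of subsets of $G$ are algebraic (respectively closed Lie) subgroups. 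On the other hand, $\ker p$ is a quotient of the torsion-free group $\pi_1(G)$ and so is torsion-free, whence $A = \Hom(\Gamma, \ker p) \isom \Hom(\Gamma^{\mathrm{ab}}/\mathrm{tors}, \bbZ^k)$ is free abelian; any finite-order element of $A$ is trivial, and the stabiliser must therefore be trivial. The main obstacle I anticipate is descending proper discontinuity to $\fQ_\Gamma$, which is not Hausdorff in general; I would address this by first settling $\fX_\Gamma$ and then transferring to $\fQ_\Gamma$ via the natural retraction sending an orbit to the unique closed orbit in its closure, which is compatible with both the $A$- and $H$-actions.
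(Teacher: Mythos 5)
Your reduction of the fibres of $p_*$ to orbits of $A=\Hom(\Gamma,\ker p)$ and your observation that the stabiliser of $[\rho]$ in $A$ is a quotient of $\pi_0 Z_G(p\rho(\Gamma))$ (hence finite) are both correct and match or refine what the paper does. But the step that closes the freeness argument is wrong: you claim $\ker p$ is torsion-free because it is a quotient of the torsion-free group $\pi_1(G)$. Quotients of torsion-free groups are not torsion-free in general: take $G=\Un(1)$ and $p(z)=z^n$, so $\pi_1(G)\isom\bbZ$ but $\ker p\isom\bbZ/n\bbZ$; similarly any intermediate cover $\wt{G}/n\bbZ\to \Un(2)$ has deck group $\bbZ/n\bbZ$. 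So $A$ can have torsion even under the hypothesis on $\pi_1(G)$, and "finite stabiliser inside a torsion-free group" does not yield triviality. Your argument does work verbatim for the \emph{universal} cover $q\co\wt{G}\to G$, where $\ker q\isom\pi_1(G)$ really is free abelian; the paper bridges from $q$ to an arbitrary $p$ by first embedding everything into the free-group case ($K\injects\ker(p)^r$, $\fQ_\Gamma(H)\injects H^r/H$), where representations always lift to $\wt{G}$, and then presenting $\ker(p)^r$ acting on $H^r/H$ as the quotient of the $\ker(q)^r$-action on $\wt{G}^r/\wt{G}$ by $\ker(p')^r$. Some such reduction is needed in your proof and is currently absent.

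The second gap is the descent of proper discontinuity to the quotients. The phrase "an $H$-invariant slice at a polystable representation is $A$-invariant" does not typecheck: a slice is a subset through $\rho$, and $\phi$ moves $\rho$ to a different point, so the slice cannot be $A$-invariant; more importantly, the slice theorem by itself does not rule out a nontrivial $\phi$ carrying $H$-orbits arbitrarily close to $[\rho]$ onto other $H$-orbits close to $[\rho]$ (Example~\ref{SU(2)} shows this genuinely fails without the torsion-freeness hypothesis, so any correct argument must use it here). The paper's mechanism is Proposition~\ref{univ-cover}: $\wt{G}\isom\bbF^k\cross DG$ with $\ker q$ projecting injectively onto a discrete subgroup of $\bbF^k$, so every nontrivial deck transformation translates the $\bbF^k$-coordinates by a definite amount; since conjugation does not touch those coordinates, saturated product neighborhoods $B_{\epsilon/2}(a)\cross V$ are moved off themselves in the quotient. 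Your proposal never exploits this decomposition, and without it (or a substitute) the proper-discontinuity claim on $\fQ_\Gamma(H)$ is unsupported. Two smaller omissions: you do not verify that the induced injection $\fQ_\Gamma(H)/A\to\fQ_\Gamma(G)$ is open onto its image (the paper uses Goldman's covering lemma on $\Hom$-spaces for this), and you do not show that $p_*$ actually carries $\fX_\Gamma(H)$ into $\fX_\Gamma(G)$ and that the preimage of a closed orbit consists of closed orbits, which is needed before one can "settle $\fX_\Gamma$ first."
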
 

This result is proven in Section~\ref{ModuliSection}, where it is also shown that the images of these covering maps are unions of path components inside their codomains.

We  discuss applications of this theorem to universal covers and fundamental groups of moduli spaces in Section~\ref{univ-cover-sec}.   This section focuses on the class of {\it exponent-canceling} groups.  A finitely generated group $\Gamma$ is called  exponent-canceling if the Abelianization of each of its relations is trivial; for instance free groups, free Abelian groups, right-angled Artin groups, and fundamental groups of closed Riemann surfaces are exponent-canceling.  Under this condition,  the above covering maps are in fact \e{surjective} (Proposition~\ref{surj}).

In particular, we compute these fundamental groups for free groups, free Abelian groups, and surface groups, subject to certain conditions on the Lie group $G$.  For instance, we prove:

\begin{alphatheorem}$\label{Bthm}$  Let $\Sigma$ be a compact, orientable surface {\rm (}with or without boundary{\rm )} and let $G$ be a connected complex reductive algebraic group with $\pi_1 (G)$ torsion-free.
Then 
$$\pi_1 (\fX_{\pi_1 \Sigma} (G)) \isom \pi_1 (G)^{b_1 (\Sigma)},$$
where $b_1 (\Sigma)$ is the first Betti number.

Furthermore, if $T = (S^1)^r$ and the derived subgroup $[G,G]$ is a product of groups of type $A_n$ or $C_n$, then 
$$\pi_1 (\fX_{\pi_1 T} (G)) \isom \pi_1 (G)^{b_1 (T)}.$$
\end{alphatheorem}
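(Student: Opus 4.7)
The plan is to apply Theorem~A with $H = \widetilde{G}$, the universal covering group of $G$. Since $G$ is a connected complex reductive Lie group with $\pi_1(G)$ torsion-free (and automatically abelian, as $\pi_1$ of any Lie group is abelian), $\widetilde{G}$ is again a connected complex reductive Lie group, and the covering homomorphism $p\co\widetilde{G} \to G$ has $\ker(p) = \pi_1(G)$. Theorem~A then provides a normal covering $p_*\co\fX_\Gamma(\widetilde{G}) \to \fX_\Gamma(G)$ onto its image, with deck group $\Hom(\Gamma,\pi_1(G))$. Both $\pi_1(\Sigma)$ (closed or with boundary) and $\Z^r$ are exponent-canceling, so Proposition~\ref{surj} upgrades this to a covering onto the whole of $\fX_\Gamma(G)$.

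Next, since $\pi_1(G)$ is abelian, every homomorphism $\Gamma \to \pi_1(G)$ factors through $\Gamma^{\mathrm{ab}}$. For $\Gamma = \pi_1(\Sigma)$, the abelianization is $\Z^{b_1(\Sigma)}$ (the relator $\prod[a_i,b_i]$ becomes trivial, and free groups abelianize freely), giving $\Hom(\pi_1\Sigma,\pi_1(G)) \cong \pi_1(G)^{b_1(\Sigma)}$; similarly $\Hom(\Z^r,\pi_1(G)) \cong \pi_1(G)^{b_1(T)}$. Consequently the stated isomorphism will follow at once, provided we can show that $p_*$ is the \emph{universal} covering of $\fX_\Gamma(G)$, i.e.\ that $\fX_\Gamma(\widetilde{G})$ is simply connected.

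Verifying this simple-connectivity is the main obstacle. For $\Gamma = \pi_1(\Sigma)$, it amounts to the statement that the moduli space of representations of a surface group into a simply connected complex reductive Lie group has trivial fundamental group: in the closed case one expects to invoke the non-abelian Hodge correspondence with the moduli of semistable $\widetilde{G}$-bundles on $\Sigma$ (where $\pi_1 \widetilde{G} = 0$), while for $\Sigma$ with boundary $\pi_1(\Sigma)$ is free and the assertion reduces to the simple-connectivity of the affine GIT quotient $\widetilde{G}^n \quot \widetilde{G}$. For $\Gamma = \Z^r$, the hypothesis that the factors of $[\widetilde{G},\widetilde{G}]$ be only of type $A_n$ or $C_n$ is exactly the condition under which every commuting $r$-tuple in $\widetilde{G}$ is conjugate into a common maximal torus $\widetilde{T}$; hence $\fX_{\Z^r}(\widetilde{G}) \cong \widetilde{T}^r/W$. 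The center of $\widetilde{G}$ contributes a complex affine factor (simply connected), and for types $A_n$ and $C_n$ the Weyl quotient of the remaining torus coordinates is also simply connected, so the product is simply connected.

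Once the simple-connectivity of $\fX_\Gamma(\widetilde{G})$ has been established, the surjective normal covering $p_*$ is the universal covering of $\fX_\Gamma(G)$, and $\pi_1(\fX_\Gamma(G))$ is identified with the deck group $\Hom(\Gamma,\pi_1(G)) \cong \pi_1(G)^{b_1}$, completing the proof.
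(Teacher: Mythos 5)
Your overall architecture is exactly the paper's: pass to the universal cover $\wt{G}\to G$, apply Theorem~A, use the exponent-canceling property of $\pi_1\Sigma$ and $\bbZ^r$ to get surjectivity of the covering (Proposition~\ref{surj}), identify the deck group $\Hom(\Gamma,\pi_1(G))\isom\pi_1(G)^{b_1}$, and reduce everything to showing $\fX_\Gamma(\wt{G})$ is simply connected (this is Proposition~\ref{Hom}, after the reduction $\fX_\Gamma(\wt{G})\heq\fX_\Gamma(DG)$ of Lemma~\ref{heqs}). The problem is that you have not actually established the simple connectivity, which is where all the content of the theorem lives, and your sketch of it contains both an omission and an error.

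The omission: even granting that $\Hom(\Gamma,\wt{G})$ is simply connected, it does not formally follow that its GIT/polystable quotient is; quotient maps need not be $\pi_1$-surjective. The paper supplies this step via a path-lifting property for reductive quotients (Lemma~\ref{pathlifting}, using Kempf--Ness and the existence of slices): a loop in $\fX_\Gamma(\wt{G})$ lifts to a path whose endpoints lie in a connected fiber, whence $\pi_1$-surjectivity of the quotient map. Your free-group case ("reduces to simple connectivity of $\wt{G}^n\quot\wt{G}$") is correct but is exactly this missing argument; your closed-surface case replaces the needed input --- Jun Li's theorem that $\Hom(\Gamma^g,DG)$ is simply connected for $DG$ simply connected semisimple --- with ``one expects to invoke the non-abelian Hodge correspondence,'' which is not a proof. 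The error: for $\Gamma=\bbZ^r$ it is false that every commuting tuple in a complex reductive group is conjugate into a maximal torus (two commuting unipotents in $\SLn(2,\bbC)$ are not); only the polystable tuples are, so the identification $\fX_{\bbZ^r}(\wt{G})\cong\wt{T}^r/W$ requires the Florentino--Lawton analysis, and the asserted simple connectivity of $\wt{T}^r/W$ is itself essentially equivalent to the Gómez--Pettet--Souto theorem that the paper cites (together with Kac--Smilga/Pettet--Souto for connectivity of $\Hom(\bbZ^r,\wt{G})$, which is precisely where the type $A_n$/$C_n$ hypothesis enters for $r\geqs 3$). As written, then, the proposal correctly reduces the theorem to three simple-connectivity statements but proves none of them.
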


Connected components of character varieties have been studied extensively; in the Appendix by Ho and Liu, work of Jun Li \cite{Li-surface-groups} is extended to complete the picture for complex representations of surface groups.   We note this result has recently been generalized in \cite{GaOl} in the context of Higgs bundles (also see \cite{DoPa} for related work).

To our knowledge, the present article is the first systematic study of \e{fundamental groups} of character varieties.  In joint work with Biswas~\cite{BiLaRa}, we have extended Theorem~\ref{Bthm} to the case in which $\pi_1 (G)$ has torsion (see also Biswas--Lawton~\cite{BiLa}). 

In the case of closed surfaces, Theorem~\ref{Bthm} can be viewed as an extension of results of Bradlow--Garc\'{i}a-Prada--Gothen~\cite{BGG}, who studied
homotopy groups of character varieties for \e{central extensions} of surface groups using the Morse theory of Higgs bundles.  Let $\Gamma^g$ denote the fundamental group of a closed orientable surface $\Sigma^g$ of genus $g$, with presentation
$$\Gamma^g = \langle a_1, \ldots, a_g, b_1, \ldots b_g \, : \, \prod_i [a_i, b_i] = 1\rangle.$$  
Let $\widehat{\Gamma^g}$ denote the central extension of $\Gamma^g$ by $\bbZ$, in which the central generator is set equal to $\prod_i [a_i, b_i]$.  The path components of moduli space $\fX_{\widehat{\Gamma^g}} (\GLn (n))$ are determined by the image $d$ of the central generator in $Z (\GLn (n))\cap [\GLn (n), \GLn (n)] = Z (\SLn (n)) = \bbZ/n\bbZ$, which is called the \e{degree} of the representation.  When $n$ and $d$ are coprime, \cite[Theorem 4.4 (1)]{BGG} shows that the corresponding path component of $\fX_{\widehat{\Gamma^g}} (\GLn (n))$ has fundamental group  $H_1 (\Sigma^g; \bbZ) = \pi_1 (\GLn (n))^{b_1 (\Sigma^g)}$, and our result extends this to the degree zero component (note that degree zero representations of $\widehat{\Gamma^g}$ are simply representations of $\Gamma^g$).

In the complex case, our proof of Theorem~\ref{Bthm} uses a path-lifting result for GIT quotients (Section~\ref{lifting-sec}).

 In Section~\ref{stable-sec}, we apply these results to study  \e{stable} moduli spaces of surface group representations.  In particular, we show that there is a homotopy equivalence
 $$\fX_{\pi_1 M^g} (\SUn) := \Hom(\pi_1 M^g, \SUn)/\SUn \heq \bbC \textrm{P}^\infty,$$ 
 where $M^g$ is a closed Riemann surface of genus $g\geqs 1$, and we identify a generator for 
 $\pi_2 (\fX_{\pi_1 M^g} (\SUn)) \isom \bbZ$.  Additionally, we prove $\pi_1 (\fX_{\pi_1 M^g} (\GLn)) \isom \bbZ^{2g}$.

\section*{Acknowledgments}
The authors thank Chiu-Chu Melissa Liu and Nan-Kuo Ho for helpful conversations, and for sharing the results in the Appendix with us.  Additionally, the authors thank Jos\'{e} Manuel Gomez, and the anonymous referees for helpful comments.

\section{Universal covers and fundamental groups of Lie groups}

In this section, we record some basic facts regarding the structure of the universal covering homomorphism for compact Lie groups and for complex reductive algebraic groups (which we call \e{reductive $\bbC$--groups} for short), as well as some facts regarding torsion in the fundamental group.

An affine algebraic group is called \emph{reductive} if it does not contain any non-trivial closed connected unipotent normal subgroup (see \cite{Bo,hum-book} for generalities on algebraic groups). Since we do not consider Abelian varieties in this paper, we will abbreviate the term \textit{affine algebraic group} to simply \textit{algebraic group.}  Over $\C$, each reductive algebraic group $G$ arises as the Zariski closure of a compact Lie group $K$.  In particular, as discussed in \cite{Sch1},
we may assume $K\subset\mathrm{O}(n,\R)$ is a real affine variety by the Peter-Weyl theorem, and thus $G\subset\mathrm{O}(n,\C)$ is the complex points of the real variety $K$.  Note that some of our references concern {\it linearly reductive groups}, that is, algebraic groups such that all linear representations are completely reducible.  However, over $\C$ linearly reductive groups and reductive groups are the same (see the discussion of Haboush's Theorem in \cite{Do}, or \cite[p. 189]{Procesi-book}).

By a simple Lie group, we will mean a connected Lie group with no non-trivial, connected normal subgroups.  With this definition, simple Lie groups are precisely those whose Lie algebra is simple, meaning that it has no non-trivial ideals.

\begin{lemma}$\label{central}$ Let $f\co K\to H$ be a covering homomorphism of Lie groups.  Then $\ker (f)$ is central in $K$.
\end{lemma}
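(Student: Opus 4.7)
The plan is to exploit the standard tension between discreteness of covering fibers and connectedness: a continuous action of a connected group on a discrete set must be trivial.

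First I would observe that since $f\co K\to H$ is a covering map, every fiber is discrete, so in particular $\ker(f)=f^{-1}(e_H)$ is a discrete subset of $K$. Being the kernel of a group homomorphism, $\ker(f)$ is also a normal subgroup of $K$.

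Next, fix an arbitrary element $k\in\ker(f)$ and consider the continuous conjugation map
\[
c_k\co K\longrightarrow K,\qquad c_k(x)=xkx^{-1}.
\]
Normality of $\ker(f)$ means $c_k(K)\subseteq\ker(f)$. Restricting to the identity component $K_0$, the image $c_k(K_0)$ is a connected subset of the discrete space $\ker(f)$, so it must consist of a single point; since $c_k(e)=k$, that point is $k$. Thus $xkx^{-1}=k$ for every $x\in K_0$.

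Finally, since $f$ is a covering homomorphism of Lie groups (so in particular $K$ is connected, as covering spaces here are taken connected, and in any case the statement of the lemma is only used in the paper for connected $K$), we have $K_0=K$, so $k$ commutes with every element of $K$. As $k\in\ker(f)$ was arbitrary, $\ker(f)\subseteq Z(K)$. There is no real obstacle here; the only point requiring care is explicitly invoking that covering fibers are discrete and that a continuous map from a connected space to a discrete space is constant.
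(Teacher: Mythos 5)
Your proof is correct and follows essentially the same route as the paper: the paper notes that $\ker(f)$ is discrete and normal and then cites Hofmann--Morris for the fact that a discrete normal subgroup of a connected Lie group is central, while you simply supply the standard conjugation-into-a-discrete-set argument proving that cited fact. Your explicit remark that connectedness of $K$ is needed (and is implicit in the paper's usage) is a worthwhile clarification, but it does not change the substance of the argument.
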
  Since $\ker (f)$ is discrete and normal, this follows from~\cite[Theorem 6.13]{Hofmann-Morris}.

\begin{proposition}$\label{univ-cover}$ Let $G$ be a compact, connected Lie group {\rm (}respectively, a connected reductive $\bbC$--group{\rm )}, and let $\bbF = \bbR$ {\rm (}respectively, let $\bbF = \bbC$ {\rm )}.
Then for some $k\geqs 0$ and some compact {\rm (}respectively, complex reductive algebraic{\rm )}, simple, simply connected Lie groups $G_1, \ldots, G_l$, there is a universal covering homomorphism
$$\wt{G} = \bbF^k \cross G_1 \cross \cdots \cross G_l \srm{q} G.$$
The subgroup $G_1 \cross \cdots \cross G_l  \leqs \wt{G}$ is precisely the commutator subgroup $D\wt{G}$, and the restriction of $q$ to $D\wt{G}$ is a universal covering homomorphism $q\co D\wt{G} \to DG$.

Now assume $\pi_1 (G)$ is torsion-free, and let 
$\pi\co \wt{G} \to \bbF^k$ be the projection map.
Then the restriction of $\pi$ to $\ker(q)$ is injective, and $\pi (\ker(q))$ is a discrete subgroup of $\bbF^k$.
It follows that $q\co D\wt{G}\to DG$ is an isomorphism, and hence $DG$ is simply connected.  In particular, the universal covering map $q$ has the form
$$q\co \bbF^k \cross DG \maps G,$$
and the restriction of $q$ to $DG$ is simply the inclusion $DG\injects G$.
\end{proposition}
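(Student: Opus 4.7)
The plan is to combine the standard structure theory of $\wt{G}$ with the torsion-free hypothesis so that the commutator factor $D\wt{G}$ of $\wt{G}$ maps isomorphically onto $DG$, after which the remaining claims fall out by passing to the quotient $\wt{G}/D\wt{G}$.

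First I invoke the structure theorem for the universal cover in each setting. In the compact case, the Lie algebra of $G$ splits as the direct sum of its center and its simple ideals $\fg_1 \oplus \cdots \oplus \fg_l$; exponentiating on the simply connected side gives $\wt{G} \isom \bbR^k \cross G_1 \cross \cdots \cross G_l$ with each $G_i$ compact, simple, and simply connected. The complex reductive case is analogous, using that a simply connected complex semisimple algebraic group factors as a product of simple, simply connected algebraic groups. Since each $G_i = [G_i, G_i]$ and the $\bbF^k$ factor is central, we get $D\wt{G} = G_1 \cross \cdots \cross G_l$; surjectivity of $q$ together with $q([\tilde x,\tilde y]) = [q(\tilde x), q(\tilde y)]$ then yields $q(D\wt{G}) = DG$, so $q|_{D\wt{G}}$ is a covering of $DG$ with simply connected total space, i.e., the universal cover of $DG$.

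Assume now $\pi_1 (G) = \ker(q)$ is torsion-free. The crucial observation is that $\ker(q) \cap D\wt{G}$ is exactly the deck group of the universal cover $q|_{D\wt{G}}$, hence isomorphic to $\pi_1 (DG)$. Since $DG$ is semisimple---compact semisimple in the compact case, and in the complex reductive case deformation retracting onto a compact semisimple maximal subgroup---its fundamental group is finite by Weyl's theorem on compact semisimple Lie groups. A finite subgroup of a torsion-free group is trivial, so $q|_{D\wt{G}} \co D\wt{G} \to DG$ is an injective covering, therefore a Lie group isomorphism, and $DG$ is simply connected.

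Triviality of $\ker(q) \cap D\wt{G}$ is equivalent to injectivity of $\pi|_{\ker(q)}$, since $\ker(\pi) = D\wt{G}$. To see that $\pi(\ker(q))$ is discrete in $\bbF^k$, I pass to the quotient by $D\wt{G}$: this induces a surjective homomorphism $\ol{q} \co \bbF^k = \wt{G}/D\wt{G} \to G/DG$ whose kernel is exactly $\pi(\ker(q))$. The target $G/DG$ is a connected abelian Lie group of the same $\bbF$-dimension as $\bbF^k$---a compact torus $(S^1)^k$ in the compact case, an algebraic torus $(\bbC^\ast)^k$ in the complex reductive case---so $\ol{q}$ is a covering homomorphism and its kernel is discrete. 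The closing statement that $\wt{G} \isom \bbF^k \cross DG$ with $q|_{DG}$ the inclusion is then just a reformulation of the isomorphism $q|_{D\wt{G}} \co D\wt{G} \xmaps{\isom} DG$ already established. The main obstacle is the identification $\ker(q) \cap D\wt{G} \isom \pi_1 (DG)$ together with the invocation of Weyl's finiteness theorem; without this finiteness, torsion-freeness of $\pi_1 (G)$ could not be leveraged to collapse the intersection and the whole argument would fail.
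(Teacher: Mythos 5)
Your proposal is correct, and its skeleton matches the paper's: structure theorem for $\wt{G}$, identification of $D\wt{G}$ with the semisimple factor via Goto's theorem, the lifting argument showing $q(D\wt{G}) = DG$, and then using torsion-freeness to kill $\ker(q)\cap D\wt{G}$. Two of your steps, however, take genuinely different routes. For the torsion-freeness step, the paper shows directly that an element of $\ker(q)\cap D\wt{G}$ lies in the finite group $Z(G_1)\cross\cdots\cross Z(G_l)$ (compact case) or in the finite kernel $A$ (complex case), hence is torsion; you instead identify $\ker(q)\cap D\wt{G}$ with the deck group $\pi_1(DG)$ of the universal covering $q|_{D\wt{G}}$ and invoke Weyl's finiteness theorem (passing to the maximal compact in the complex case). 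This is essentially the mechanism the paper records separately in Remark~\ref{torsion-rmk}, and it is a perfectly valid substitute. For the discreteness of $\pi(\ker(q))$, the paper computes explicitly: $q_1(x)$ lands in the finite central kernel $Z$, so the $(S^1)^k$--coordinates of $q_1(x)$ are $n$th roots of unity and $\pi(x)\in \exp^{-1}((\mu_n)^k)$, a lattice. You instead observe that $\pi(\ker(q))$ is exactly the kernel of the induced surjection $\bbF^k = \wt{G}/D\wt{G}\to G/DG$ between groups of equal dimension, hence discrete; note that the identification $\ker(\ol{q}) = \pi(\ker(q))$ does require the already-established surjectivity $q(D\wt{G}) = DG$ for the reverse inclusion, which you have. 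Your quotient argument is arguably cleaner and less computational, while the paper's explicit version has the minor advantage of exhibiting $\pi(\ker(q))$ inside a concrete lattice. Your one-line proof that $DG\subset q(D\wt{G})$ by lifting the entries of a commutator is also more direct than the paper's decomposition of elements as $q(z)q(c)$.
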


\begin{proof}
In the compact case~\cite[Theorem 6.19]{Hofmann-Morris} states that there exist
connected, compact, simple Lie groups $H_1,\dots, H_l$ and a short exact sequence
$$1\maps Z \maps (S^1)^k \cross H_1 \cross \cdots \cross H_l \maps G \maps 1$$
with $Z$ finite and central.  Let $G_i$ denote the universal cover of $H_i$.  We claim that $G_i$ is  compact, connected, and simple.  Simplicity follows from the fact that $G_i$ and $H_i$ have the same Lie algebra; connectivity of $G_i$ follows immediately from the fact that $H_i$ is connected; and $G_i$ is compact by~\cite[Theorem 0.6.10]{Bredon-transf}.

The map $q$ is now the composite of  two covering maps
$$\wt{G} = \bbR^k \cross G_1 \cross \cdots \cross G_l \srm{q_1} (S^1)^k \cross H_1 \cross \cdots \cross H_l \srm{q_2} G.$$
The map $q_1$ is the product of universal covering maps $G_i \to H_i$ and $\exp\co \bbR^k\to (S^1)^k$, and
the map $q_2$  is the quotient map for $Z$.  Since each $G_i$ is simply connected, it follows that $q=q_2 \circ q_1$ is a universal covering homomorphism.

To see that $D\wt{G} = G_1 \cross \cdots \cross G_l$, first note that
since each $G_i$ is simple, the product $G_1 \cross \cdots \cross G_l$ is semisimple, and hence equal to its commutator subgroup by Goto's Commutator Theorem (see, for instance,~\cite[Theorem 6.56]{Hofmann-Morris}).
Now
 \begin{equation} \label{D} D\wt{G}  = D(\bbF^k \cross G_1 \cross \cdots \cross G_l) = D\bbF^k\cross D (G_1 \cross \cdots \cross G_l)  = G_1 \cross \cdots \cross G_l.\end{equation}

Next, we check that the restriction of $q$ to $D\wt{G}$ gives the universal covering of $DG$.  By (\ref{D}), we know $D\wt{G}$ is simply connected,  so we just need to check that $q (D\wt{G}) = DG$.
 The containment $q(D\wt{G}) \subset DG$ is immediate.  To see that $DG\subset q(D\wt{G})$, note that by surjectivity of $q$, each element of $G$ has the form $q(z) q(c)$, where $z\in \bbF^k$ is central in $\wt{G}$ and $c\in D\wt{G}$ is a commutator.  Surjectivity of $q$ also shows that $q(z)$ is central in $G$.  Given another element $q(z')q(c') \in G$ (with $z'$ central in $\wt{G}$ and $c'\in D\wt{G}$) we have
$$[q(z)q(c), q(z')q(c')] = [q(c), q(c')] = q([c,c']) \in q(D\wt{G}),$$
as desired.

Now say $\pi_1 (G) \isom \ker (q)$ is torsion-free.
If $x\in \ker(q)$ and  $\pi(x) = 0 \in \bbR^k$ (where $\pi\co \wt{G}\to \bbR^k$ is the projection), then  
by Lemma~\ref{central}, $x$ lies in the center
$$Z ( G_1 \cross \cdots \cross G_l) = Z(G_1) \cross \cdots \cross Z(G_l).$$
Since $G_i$ is simple, the identity component of its center is trivial, meaning that the center is a discrete group.  Since each $G_i$ is also compact, we conclude that $Z(G_i)$ is finite for each $i$.  Hence $x$ is a torsion element in $\ker(q)$, so $x=1$.  This shows that the restriction of $\pi$ to $\ker(q)$ is an injective map, as claimed.
Moreover, if $x\in \ker(q)$ then $q_1 (x) \in Z = \ker(q_2)$, so projecting $q_1 (x)$ to $(S^1)^k$ gives an element whose coordinates are $n$th roots of unity, where $n=|Z|$.  
Letting $\mu_n \leqs S^1$ denote the subgroup of $n$th roots of unity, we have $\pi(x) \in \exp^{-1} ((\mu_n)^k)$, which is a discrete subgroup of $\bbR^k$.  Hence $\pi(\ker(q)) \leqs \exp^{-1} ((\mu_n)^k) \leqs \bbR^k$ is discrete as well.

Since $\ker (q)$ intersects $D\wt{G} = G_1 \cross \cdots G_l$ trivially, $q$ restricts to an isomorphism  $D\wt{G} \to DG$.  This completes the proof in the compact case.

In the complex case, there exists a short exact sequence of Lie groups
$$1\maps A \maps (\bbC^*)^k \cross G_1 \cross \cdots \cross G_l \srm{p} G\maps 1$$
where each $G_i$ is a simply connected, simple Lie group, and the kernel $A$ is a finite central subgroup (see for example~\cite[p. 376]{Procesi-book}). 

The quotient map $p$ is a covering map (since $A$ is finite) and now we obtain a universal covering 
homomorphism $q$ for $G$ as the composite
$$\bbC^k \cross  G_1 \cross \cdots \cross G_l \xmaps{\textrm{exp} \cross \Id}  (\bbC^*)^k \cross G_1 \cross \cdots \cross G_l \srm{p} G.$$ 
If $\pi_1 (G) \isom \ker (q)$ is torsion-free, and $x\in \ker(q)$ lies in $G_1 \cross \cdots \cross G_l$, then in fact $x\in\ker (p) = A$.  Since $A$ is finite, this implies $x=1$.  The remainder of the proof in the complex case is the same as in the compact case; note that it is a theorem of Pasiencier--Wang~\cite{Pasiencier-Wang} that every complex semisimple Lie group is equal to its commutator subgroup.  
\end{proof}

In this article, we will mainly be interested in Lie groups $G$ with $\pi_1 (G)$ torsion-free.  In Proposition~\ref{univ-cover} we showed that, in our cases of interest, this condition implies that $DG$ is simply connected.  In fact, the converse is also true.

\begin{proposition}$\label{torsion-free}$
Let $G$ be either a compact, connected Lie group or a connected reductive $\bbC$--group.  Then $\pi_1 (G)$ is torsion-free if and only if the commutator subgroup $DG$ is simply connected.
\end{proposition}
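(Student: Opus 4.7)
The plan is to deduce the remaining direction from Proposition~\ref{univ-cover} together with Lemma~\ref{central}; the forward direction, that $\pi_1(G)$ torsion-free implies $DG$ simply connected, is already part of Proposition~\ref{univ-cover}.

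For the converse, I would assume $DG$ is simply connected and work with the universal covering
$$q\co \wt{G} = \bbF^k \cross G_1 \cross \cdots \cross G_l \maps G$$
from Proposition~\ref{univ-cover}, where each $G_i$ is simply connected and simple. A key preliminary check is that the identification of $q|_{D\wt{G}}\co D\wt{G} \to DG$ as the universal covering of $DG$ is proved in Proposition~\ref{univ-cover} without any appeal to the torsion-free hypothesis: only the simple-connectedness of $D\wt{G}=G_1\cross\cdots\cross G_l$ and the commutator identity $q(D\wt{G}) = DG$ are used. Hence this fact is available as input here.

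Given a torsion element $x\in\ker(q)\isom\pi_1(G)$, Lemma~\ref{central} places $x$ in the center of $\wt{G}$, namely $\bbF^k \cross Z(G_1)\cross \cdots \cross Z(G_l)$. Writing $x=(v,z_1,\ldots,z_l)$, the projection $v\in\bbF^k$ is then a torsion element of a torsion-free topological group (since $\bbF=\bbR$ or $\bbC$ and $\bbF^k$ is under addition), forcing $v=0$. So $x$ lies in $\{0\}\cross G_1\cross\cdots\cross G_l = D\wt{G}$. Since $DG$ is simply connected by assumption and $q|_{D\wt{G}}$ is its universal cover, the restriction $q|_{D\wt{G}}$ is an isomorphism, whence $\ker(q)\cap D\wt{G}=\{1\}$ and $x=1$. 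This shows $\pi_1(G)$ has no nontrivial torsion.

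I do not anticipate a substantive obstacle. The only point requiring genuine care is the preliminary observation that Proposition~\ref{univ-cover}'s assertion about $q|_{D\wt{G}}$ being the universal cover of $DG$ does not tacitly use the torsion-free hypothesis; a quick reinspection of that proof confirms it does not, after which the argument reduces to the short centralization-and-projection step above.
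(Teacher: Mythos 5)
Your argument is correct and is essentially the paper's own proof, merely rearranged: both hinge on the fact (established in Proposition~\ref{univ-cover} without the torsion-free hypothesis) that $q|_{D\wt{G}}\co D\wt{G}\to DG$ is the universal cover, hence an isomorphism once $DG$ is simply connected, so that $\ker(q)$ meets $D\wt{G}$ trivially and its torsion is controlled by the torsion-free factor $\bbF^k$. The paper phrases this by noting that $\ker(q)$ then injects into $\bbF^k$ under the projection, whereas you argue element-by-element on a given torsion class (the appeal to Lemma~\ref{central} is harmless but not needed for that step); the two are interchangeable.
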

\begin{proof} We only need to show that when $DG$ is simply connected, $\pi_1 (G)$ is torsion-free, so 
say $DG$ is simply connected.    By Proposition~\ref{univ-cover}, the universal covering map $q\co \wt{G} = \bbF^k \cross D\wt{G} \to G$ restricts to a universal covering $D\wt{G} \to DG$.  Since $DG$ is simply connected, this covering must in fact be an isomorphism, so $q$ is injective on $D\wt{G}$.  Hence $\ker (q)$ injects into $\bbF^k$ under the projection $\wt{G}\to \bbF^k$, and since $\bbF^k$ is torsion-free, so is $\ker (q) \isom \pi_1 (G)$.  
\end{proof}

\begin{remark}$\label{torsion-rmk}$
If $G$ is either a compact Lie group or a complex reductive $\bbC$--group, then the inclusion $i\co DG\injects G$ actually induces an isomorphism between $\pi_1 (DG)$ and the torsion subgroup of $\pi_1 (G)$, giving another proof of Proposition~\ref{torsion-free}.  To prove this stronger statement, note that there is a natural exact sequence of groups $1\to DG\to G\srt{p} G/DG\cong Z\to 1$, where $Z$ is a complex or compact torus.  This then gives a long exact sequence in homotopy:
$$0=\pi_2(G/DG)\to\pi_1(DG)\srm{i_*} \pi_1(G)\srm{p_*} \pi_1(G/DG)=\Z^k, $$
which shows that $i_*$ is injective.
Since $\bbZ^k$ is torsion-free, the torsion subgroup of $\pi_1 (G)$ is contained in $i_* (\pi_1 (DG)) \isom \ker (p_*)$.   By Weyl's Theorem, the fundamental group of a semisimple compact Lie group is finite, and the same holds for complex semisimple reductive $\bbC$--groups, since each such group deformation retracts to its maximal compact subgroup {\rm (}which is again semisimple{\rm )}.  Hence $i_* (\pi_1 (DG))$ is contained in the torsion subgroup of $\pi_1 (G)$, completing the argument.
\end{remark}

\section{Moduli spaces of representations}\label{ModuliSection}

Let $\Gamma$ be a finitely generated discrete group and let $G$ be a Lie group.  Consider the \emph{representation space} $\Hom(\Gamma,G)$, the space of homomorphisms from $\Gamma$ to $G$.  We give this space the compact-open topology (i.e. the subspace topology inherited from the mapping space $\Map (\Gamma, G)$, where $\Gamma$ has the discrete topology).
For example, when $\Gamma=F_{r}$ is a free group on $r$ generators, the evaluation of a representation on a set of free generators provides a homeomorphism with the Cartesian product $\Hom(F_{r},G)\cong G^{r}$.

In general, by choosing generators $\gamma_{1},\cdots,\gamma_{r}$ for $\Gamma$ (for some $r$), we have a natural epimorphism $F_{r}\twoheadrightarrow\Gamma$.
This allows one to embed $\Hom(\Gamma,G)\subset\Hom(F_{r},G)\cong G^{r}$, and the Euclidean topology induced from
the manifold $G^{r}$ agrees with the compact-open topology on $\Hom(\Gamma,G)$.

The Lie group $G$ acts on $\Hom(\Gamma,G)$ by conjugation of representations; that is, $g\cdot\rho=g\rho g^{-1}$ for $g\in G$ and $\rho\in\Hom(\Gamma,G)$.  
We define $\fQ_\Gamma (G)$ to be the resulting quotient space.
Let $\Hom(\Gamma,G)^{ps}$ denote the subset of $\Hom(\Gamma,G)$ consisting of representations whose $G$-orbit is closed. Let $\fX_{\Gamma}(G) \subset \fQ_\Gamma (G)$ denote the subspace of closed orbits; that is, $\fX_{\Gamma}(G) =\Hom(\Gamma,G)^{ps}/G$.  Although it is not necessarily an algebraic set, this {\it polystable quotient} is referred to as the \emph{$G$-character variety of $\Gamma$}.  Note that when $G$ is compact, $\Hom(\Gamma,G)^{ps} = \Hom(\Gamma,G)$.

Given a homomorphism of Lie groups $p\co H\to G$, composition with $p$ defines a  map $p_* \co \Hom(\Gamma, H)\to \Hom(\Gamma, G)$.  Since $p(h\rho h^{-1}) = p(h) p_* (\rho) p(h)^{-1}$, we obtain an induced map $\fQ_\Gamma (H) \to \fQ_\Gamma (G)$ which we denote by $\ol{p}_*$, or simply $p_*$ if no confusion is likely.

Two main classes of examples are important for us, and for each the quotient topology on $\fX_{\Gamma}(G)$ will be Hausdorff (in fact, triangulable). When $G$ is a compact Lie group,  $\fX_{\Gamma}(G)= \fQ_\Gamma (G)$ is the usual orbit space (which is semi-algebraic and compact).   When $G$ is a reductive $\C$-group, each orbit closure has a unique closed sub-orbit. So, the quotient $\fX_{\Gamma}(G)$ is set-theoretically identified with the GIT quotient (see \cite{Do}):
\[\Hom(\Gamma,G)\quot G:=\mathrm{Spec}_{max}(\C[\Hom(\Gamma,G)]^{G}).\]  Since affine GIT quotients are affine varieties, they are subsets of Euclidean space and so inherit a Euclidean topology (we will not consider the Zariski topology unless explicitly mentioned).  In these terms, we have the following well-known correspondence between the GIT quotient and the polystable quotient (this fact is implicit in the work of Luna~\cite{Luna}, and an explicit proof appears in \cite[Theorem 2.1]{FlLaAbelian}).

\begin{proposition}
\label{polystablelemma} Let $G$ be a complex reductive algebraic group, and $\Gamma$ be a finitely generated group. Then $\Hom(\Gamma,G)^{ps}/G$ is homeomorphic to $\Hom(\Gamma,G)\quot G$. 
\end{proposition}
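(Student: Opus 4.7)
The plan is to produce a natural continuous bijection
$\bar F \colon \Hom(\Gamma, G)^{ps}/G \to \Hom(\Gamma, G) \quot G$
from invariants, and then upgrade it to a homeomorphism using standard GIT structure theory.

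First I would set $X = \Hom(\Gamma, G)$, realized as a closed $G$-invariant affine subvariety of $G^r \subset \C^N$ after picking $r$ generators of $\Gamma$. By the Hilbert--Nagata finiteness theorem (reductivity of $G$), the invariant ring $\C[X]^G$ is finitely generated; fix generators $f_1, \ldots, f_M$. The resulting $G$-invariant polynomial map $F = (f_1, \ldots, f_M) \colon X \to \C^M$ has image whose Zariski closure, with reduced structure, is by definition $X \quot G$, and the subspace Euclidean topology on this image is the topology under consideration. Since $F$ is continuous and $G$-invariant, it descends through the quotient map $X^{ps} \to X^{ps}/G$ to a continuous map $\bar F$ as above.

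Next I would verify that $\bar F$ is a bijection. The key input is the classical separation property for reductive group actions on affine varieties: any two disjoint $G$-invariant Zariski-closed subsets can be separated by invariant regular functions. Injectivity is then immediate, since two distinct polystable orbits are disjoint and closed. For surjectivity, every $G$-orbit closure in $X$ contains a unique minimal (hence closed) $G$-orbit, so every fiber of $F$ meets $X^{ps}$.

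The main obstacle is upgrading the continuous bijection $\bar F$ to a homeomorphism. I would invoke the Kempf--Ness theorem for this step. Choose a maximal compact subgroup $K \leqs G$ and a $K$-equivariant closed embedding of $X$ into a unitary $K$-representation (available thanks to the description $G \subset \mathrm{O}(n, \C)$ recalled earlier via Peter--Weyl). Let $Z \subset X$ be the critical set of the squared-norm function $\|\cdot\|^2$, equivalently the zero set of an associated moment map. Kempf--Ness asserts that (i) $Z$ intersects each polystable $G$-orbit in a single $K$-orbit, (ii) $Z$ meets no non-polystable orbit, and (iii) the induced map $Z/K \to X \quot G$ is a homeomorphism, the non-trivial direction of properness coming from the gradient flow of $\|\cdot\|^2$. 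The composite
$$Z/K \longrightarrow X^{ps}/G \xrightarrow{\;\bar F\;} X \quot G$$
is then a homeomorphism whose factors are both continuous bijections, so each factor is a homeomorphism. An equivalent route uses Luna's slice theorem to reduce to local étale models of the form $G \times^H S$ around each closed orbit, after which the homeomorphism statement becomes local and routine.
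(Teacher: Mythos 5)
Your argument is correct. Note that the paper does not actually prove this proposition: it is stated as a known fact, with the proof delegated to \cite[Theorem 2.1]{FlLaAbelian} (and attributed implicitly to Luna), so there is no internal proof to compare against. Your route---continuous bijection from the separation and unique-closed-orbit properties, upgraded to a homeomorphism by factoring the Kempf--Ness homeomorphism $Z/K \to X \quot G$ through $X^{ps}/G$ and observing that a composite homeomorphism with both factors continuous bijections forces each factor to be a homeomorphism---is a standard and complete way to establish the result, and it is essentially the same circle of ideas (Kempf--Ness, Schwarz, Luna) that underlies the cited reference and that the paper itself invokes later in Lemma~\ref{pathlifting}. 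Two small points of hygiene: the embedding for Kempf--Ness should be a $G$-equivariant closed embedding into a finite-dimensional $G$-module equipped with a $K$-invariant Hermitian form (obtained by averaging), not merely a $K$-equivariant one; and your surjectivity step tacitly uses that $F$ maps $X$ \emph{onto} $\mathrm{Spec}_{max}(\C[X]^G)$, which is the standard surjectivity of the affine GIT quotient map for reductive groups and should be cited as such. Neither affects the validity of the argument.
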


Thus in all cases, the spaces $\fX_{\Gamma}(G)$ will be semi-algebraic sets and thus (Hausdorff) simplicial complexes in the natural Euclidean topologies we consider.

\begin{remark}
We note that when $G$ is the set of real points of a reductive $\C$-group, then by \cite{RiSl}, the polystable quotient is a semi-algebraic space.
\end{remark}

We begin the discussion of covering spaces with a result of Goldman~\cite[Lemma 2.2]{Goldman-components}.

\begin{lemma}[Goldman] $\label{Goldman}$
Let $p \co H \to G$ be a covering homomorphism between Lie groups and let $\Gamma$ be a finitely generated group.
Then the image of the induced map
$$\Hom(\Gamma, H) \srm{p_*} \Hom(\Gamma, G)$$
is a union of path components in $ \Hom(\Gamma, G)$.  If $\Hom(\Gamma, G)$ is locally path connected, then 
when viewed as a map to its image, $p_*$ 
is a normal covering map whose group of deck transformations is $\Hom(\Gamma, \ker (p))$.
\end{lemma}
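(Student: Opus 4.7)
The plan is to split the lemma into its two assertions and handle each via unique path lifting from covering space theory.

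For the first assertion (that the image is a union of path components), I would use unique path lifting coordinate-wise. Given $\rho_0 = p_*(\tilde{\rho}_0)$ in the image and a continuous path $\{\rho_t\}_{t \in [0,1]}$ in $\Hom(\Gamma, G)$ starting at $\rho_0$, for each $\gamma \in \Gamma$ let $\tilde{\rho}_t(\gamma) \in H$ be the unique lift of the path $t \mapsto \rho_t(\gamma)$ starting at $\tilde{\rho}_0(\gamma)$. The crucial verification is that each $\tilde{\rho}_t \co \Gamma \to H$ is a homomorphism. Given $\gamma_1, \gamma_2 \in \Gamma$, the two paths $t \mapsto \tilde{\rho}_t(\gamma_1 \gamma_2)$ and $t \mapsto \tilde{\rho}_t(\gamma_1) \tilde{\rho}_t(\gamma_2)$ both lift $t \mapsto \rho_t(\gamma_1)\rho_t(\gamma_2) = \rho_t(\gamma_1\gamma_2)$ and agree at $t=0$, so they coincide for all $t$ by uniqueness of path lifting for $p \co H \to G$. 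Hence $\rho_1$ also lies in the image.

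For the second assertion I first set up the deck transformation action. The group $\Hom(\Gamma, \ker(p))$ acts on $\Hom(\Gamma, H)$ by $(\phi \cdot \tilde{\rho})(\gamma) \defn \phi(\gamma)\tilde{\rho}(\gamma)$; because $\ker(p)$ is central in $H$ by Lemma~\ref{central}, the product is again a homomorphism, and the action is clearly free. To see orbits coincide with fibers of $p_*$, given $\tilde{\rho}_1, \tilde{\rho}_2$ with $p_*(\tilde{\rho}_1) = p_*(\tilde{\rho}_2)$, the assignment $\phi(\gamma) \defn \tilde{\rho}_2(\gamma)\tilde{\rho}_1(\gamma)^{-1}$ lands in $\ker(p)$, and centrality of $\ker(p)$ makes $\phi$ a homomorphism carrying $\tilde{\rho}_1$ to $\tilde{\rho}_2$.

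To establish local triviality, I would exploit a choice of generators $\gamma_1, \ldots, \gamma_r$ of $\Gamma$ to embed $\Hom(\Gamma, H) \subset H^r$ and $\Hom(\Gamma, G) \subset G^r$. The product map $p^r \co H^r \to G^r$ is a covering map, and restriction over $\Hom(\Gamma, G)$ yields a covering $\pi \co (p^r)^{-1}(\Hom(\Gamma, G)) \to \Hom(\Gamma, G)$. The key observation is that $\Hom(\Gamma, H)$ is a union of path components of this ambient covering: evaluating any defining relator $R_\alpha$ of $\Gamma$ along a path in the preimage gives a continuous map into the discrete set $\ker(p)$, hence constant, so the condition $R_\alpha \equiv 1$ is preserved along paths. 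Now given $\rho_0$ in the image of $p_*$ (which is open by Part 1), local path connectedness of $\Hom(\Gamma, G)$ lets me choose a path-connected evenly-covered neighborhood $V \subset \mathrm{Im}(p_*)$ for $\pi$. Then $\pi^{-1}(V) = \bigsqcup_\alpha V_\alpha$ with each $V_\alpha$ path-connected and homeomorphic to $V$, and each $V_\alpha$ meeting $\Hom(\Gamma, H)$ is contained in it. This produces the local trivialization of $p_*|_{\Hom(\Gamma, H)}$, and the transitivity of the $\Hom(\Gamma, \ker(p))$-action on fibers gives normality.

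The main obstacle I anticipate is precisely this local-triviality step: bridging between the ``intrinsic'' covering $p_* \co \Hom(\Gamma, H) \to \mathrm{Im}(p_*)$ and the well-behaved ambient product covering $p^r$. The product-map trick handles this cleanly, but it relies on local path connectedness in exactly the right place, to convert the path-component description of $\Hom(\Gamma, H)$ inside the pullback covering into the open-set description required for a local trivialization.
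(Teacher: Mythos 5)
Your proof is correct and follows essentially the same route as the paper's own sketch in Remark~\ref{Goldman-rmk}: unique path lifting (applied generator-wise) for the union-of-path-components claim, and the ambient product covering $p^r\co H^r\to G^r$ together with a path-connected evenly covered neighborhood $V$ and the observation that $\Hom(\Gamma,H)$ is a union of path components of $(p^r)^{-1}(\Hom(\Gamma,G))$ for local triviality. The only cosmetic difference is that you establish the latter path-component fact directly by evaluating relators into the discrete group $\ker(p)$ rather than deducing it from unique path lifting, which is a clean way to fill in the step the paper leaves to Goldman.
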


\begin{remark}$\label{Goldman-rmk}$ Lemma~\ref{triang} shows that $\Hom(\Gamma, G)$ is locally path connected in either of the following situations{\rm: 1)} $\Gamma$ is finitely presented {\rm (}and $G$ is an arbitrary Lie group{\rm )}, or \rm{2)} $G$ covers an algebraic group {\rm (}and $\Gamma$ is an arbitrary finitely generated group{\rm )}.

Goldman's article focuses on finitely presented groups, but in fact his argument just requires $\Hom(\Gamma, G)$ to be locally path connected.  

To find an evenly covered neighborhood of a representation $\rho\in \Hom(\Gamma, G) \subset G^r$, first
take a neighborhood  $U\subset G^r$ with $\rho \in U$ such that $U$ is  evenly covered by $p^r$.  By local path connectedness, there exists a path connected neighborhood $V\subset \Hom(\Gamma, G)$ with $\rho\in V \subset U$.  Goldman shows that  $p_*$ has the unique path-lifting property\footnote{This alone is not sufficient to show that a map is a covering map {\rm (}see~\cite[Section 1.3, Exercise 6]{Hatcher}{\rm )}.}, and it follows that 
$\Hom(\Gamma, H)$ is a union of path components inside $(p^r)^{-1} (\Hom(\Gamma, G))$.  Hence $V$ is evenly covered by $p_*$.

\end{remark}

\begin{lemma}$\label{triang}$ Let $G$ be a Lie group and let $\Gamma$ be a discrete group.
If $\Gamma$ is finitely presented, then $\Hom(\Gamma, G)$ is locally path connected.
If $G$ admits a covering homomorphism onto a real algebraic group and $\Gamma$ is finitely generated, then $\Hom(\Gamma, G)$ is triangulable, and hence locally path connected.  
\end{lemma}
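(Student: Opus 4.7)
For the first statement, present $\Gamma = \langle \gamma_1,\ldots,\gamma_r \mid w_1,\ldots,w_s\rangle$. Evaluation on generators identifies
\[ \Hom(\Gamma,G) \;=\; W^{-1}(1_G,\ldots,1_G) \;\subset\; G^r,\]
where $W\co G^r\to G^s$ is the real analytic map sending $(g_1,\ldots,g_r)$ to $(w_1(g_1,\ldots,g_r),\ldots,w_s(g_1,\ldots,g_r))$ (analytic since the group operations on a Lie group are real analytic). Thus $\Hom(\Gamma,G)$ is a closed real analytic subset of the real analytic manifold $G^r$. The plan is to invoke {\L}ojasiewicz's triangulation theorem for semi-analytic subsets of a real analytic manifold, which yields local triangulability and in particular local path connectedness.

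For the second statement, let $p\co G\to G'$ be the covering homomorphism onto a real algebraic group, and fix generators $\gamma_1,\ldots,\gamma_r$ of $\Gamma$. First, I would verify that $\Hom(\Gamma,G')\subset (G')^r$ is a real algebraic variety: it is defined by the (a priori infinite) polynomial equations $w(g'_1,\ldots,g'_r)=1_{G'}$ for $w$ in $R := \ker(F_r\to\Gamma)$, but the coordinate ring of the real algebraic group $(G')^r$ is Noetherian, so finitely many of these already cut out $\Hom(\Gamma,G')$. It then follows from~\cite[Corollary 9.3.7 and Remark 9.3.8]{BCR} that $\Hom(\Gamma,G')$ is triangulable.

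Set $X := (p^r)^{-1}(\Hom(\Gamma,G'))$. The restriction of $p^r$ is a covering map $X\to\Hom(\Gamma,G')$; since each closed simplex of the (locally contractible) base is simply connected, it lifts to a disjoint union of copies in $X$, assembling into a simplicial structure on $X$. Now I would check that $\Hom(\Gamma,G)$ is a clopen subset of $X$. For each $w\in R$, the evaluation $\mathrm{ev}_w(g_1,\ldots,g_r) := w(g_1,\ldots,g_r)$ takes values in $\ker(p)$ on $X$ (since $p\circ\mathrm{ev}_w$ is trivial on $X$) and is continuous into this discrete group; as $X$ is locally connected, $\mathrm{ev}_w$ is locally constant. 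Given $x\in\Hom(\Gamma,G)$ and a small connected neighborhood $U\subset X$ of $x$, we have $\mathrm{ev}_w\equiv \mathrm{ev}_w(x)=1_G$ on $U$ for every $w\in R$, so $U\subset\Hom(\Gamma,G)$. Hence $\Hom(\Gamma,G)$ is open in $X$; it is also closed as an intersection of the level sets $\mathrm{ev}_w^{-1}(1_G)$. A clopen subset of the simplicial complex $X$ is itself a subcomplex, so $\Hom(\Gamma,G)$ is triangulable and therefore locally path connected.

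The main technical step is the sheet-wise lifting of the triangulation from $\Hom(\Gamma,G')$ to the possibly infinite cover $X$; this rests on contractibility of simplices together with basic covering space theory, but requires some bookkeeping to assemble the lifted simplices into a genuine global simplicial structure. The appeal to {\L}ojasiewicz in the first part is another non-self-contained but standard ingredient.
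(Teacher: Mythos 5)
Your proof is correct, and for the first statement it is essentially the paper's argument: realize $\Hom(\Gamma,G)$ as a real analytic subvariety of $G^r$ and invoke local triangulability of analytic sets (the paper cites Sat\={o} rather than {\L}ojasiewicz, but this is the same ingredient).

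For the second statement your route differs in one meaningful way. The paper first proves that $\Hom(\Gamma,\ol{G})$ is algebraic and triangulable, then invokes Goldman's Lemma~\ref{Goldman} (applicable because the base $\Hom(\Gamma,\ol{G})$ is already known to be locally path connected) to conclude that $f_*\co\Hom(\Gamma,G)\to\Hom(\Gamma,\ol{G})$ is a covering onto a union of path components, and lifts the triangulation. You instead bypass Goldman's lemma: you work directly inside $X=(p^r)^{-1}(\Hom(\Gamma,G'))$, which is tautologically a covering space of the base, lift the triangulation to $X$, and then show $\Hom(\Gamma,G)$ is clopen in $X$ via the locally constant $\ker(p)$--valued evaluation maps $\mathrm{ev}_w$. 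This is in effect a self-contained reproof of the covering-space content of Goldman's lemma (compare Remark~\ref{Goldman-rmk}, which notes that Goldman's argument shows $\Hom(\Gamma,H)$ is a union of path components of $(p^r)^{-1}(\Hom(\Gamma,G))$), so the two proofs rest on the same underlying mechanism; yours buys independence from the citation at the cost of the bookkeeping you flag. You also supply a detail the paper elides: the Noetherian argument that $\Hom(\Gamma,G')$ is algebraic even though $\Gamma$ is only finitely generated, which is worth making explicit since the lemma deliberately does not assume finite presentability in its second clause.
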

\begin{proof}
When $\Gamma$ is finitely presented as $\langle \gamma_1,...,\gamma_r\ |\ R_1,...,R_k\rangle$, $\Hom(\Gamma, G)$ is the locus of solutions to $R_i(g_1,...g_r)=\Id$ in $G^r$.  However, $G^r$ is an analytic manifold (see \cite{Knapp}), and thus $\Hom(\Gamma, G)$ is an analytic variety.  As shown in \cite{Sato}, analytic varieties are locally triangulable, and hence locally path connected.

For the second statement,  let $f\co G\to \ol{G}$ be a covering map, with $\ol{G}$ algebraic.  Then $\Hom(\Gamma, \ol{G})$ is algebraic, and hence triangulable.  Now Lemma~\ref{Goldman} tells us that $f_* \co \Hom(\Gamma, G) \to \Hom(\Gamma, \ol{G})$ is a covering map onto its image, and this image is a union of path components in $\Hom(\Gamma, \ol{G})$.  So the image of $f_*$ is triangulable, and any triangulation can be lifted to the covering space $\Hom(\Gamma, G)$.
\end{proof}

\begin{theorem}\label{prop-discont}
Let $G$ be either a connected reductive $\C$-group or a compact connected Lie group, and assume that $\pi_1 (G)$ is torsion-free.  Let $\Gamma$ be a finitely generated discrete group. Then for any covering homomorphism $p\co H\to G$, the induced maps
$$\ol{p}_* \co \fQ_\Gamma (H) \to \ol{p}_* ( \fQ_\Gamma (H)) \subset \fQ_\Gamma (G)$$ 
and 
$$\ol{p}_* \co \fX_\Gamma (H) \to \ol{p}_* (\fX_\Gamma (H)) \subset  \fX_\Gamma (G)$$
are normal covering maps with structure group $\Hom(\Gamma, \ker(p))$, and we have
\begin{equation} \label{img} \ol{p}_* ( \fX_\Gamma (H) ) = \ol{p}_* (\fQ_\Gamma (H)) \cap  \fX_\Gamma (G). \end{equation}
Moreover, $\ol{p}_*( \fQ_\Gamma (H))$ is a union of path components of $ \fQ_\Gamma (G)$, and $\ol{p}_*( \fX_\Gamma (G))$ is a union of path components of $ \fX_\Gamma (G)$.

\end{theorem}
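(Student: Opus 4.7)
The strategy is to descend the covering map at the level of representation varieties provided by Lemma~\ref{Goldman} to the orbit quotients, with the torsion-free hypothesis on $\pi_1(G)$ entering as freeness of the induced deck action.

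First, since $G$ is a real algebraic group and $H$ covers $G$, Lemma~\ref{triang} ensures both $\Hom(\Gamma, G)$ and $\Hom(\Gamma, H)$ are locally path connected. Hence $p_* \co \Hom(\Gamma, H) \to \Hom(\Gamma, G)$ is a normal covering onto its image---which is a union of path components of $\Hom(\Gamma, G)$---with deck group $\Hom(\Gamma, \ker p)$ acting by pointwise multiplication $(\chi \cdot \rho)(\gamma) = \chi(\gamma) \rho(\gamma)$. Since $\ker p$ is central in $H$ by Lemma~\ref{central}, this action commutes with $H$-conjugation and therefore descends to an action of $\Hom(\Gamma, \ker p)$ on $\fQ_\Gamma(H)$; multiplication by a central representation is a homeomorphism of $\Hom(\Gamma, H)$ that preserves orbit closures, so the action further restricts to $\fX_\Gamma(H)$.

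The critical step---and the one where the torsion-free hypothesis enters---is to show the descended action on $\fQ_\Gamma(H)$ is free. Suppose $\chi \cdot \rho = h \rho h^{-1}$ for some $h \in H$. Centrality of $\chi(\gamma)$ rearranges this to
\[ \chi(\gamma) \;=\; h \rho(\gamma) h^{-1} \rho(\gamma)^{-1} \;=\; [h, \rho(\gamma)] \;\in\; DH \cap \ker p . \]
By Proposition~\ref{univ-cover}, $\wt{G} = \bbF^k \cross DG$ with $\ker q \leqs \bbF^k$; writing $H = \wt{G}/N$ for a subgroup $N \leqs \ker q$ gives $H = (\bbF^k/N) \cross DG$ as Lie groups, so $DH = DG$ while $\ker p = \ker q / N$ lies inside $\bbF^k/N$. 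Therefore $\ker p \cap DH = \{1\}$, forcing $\chi$ to be the trivial homomorphism.

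Given freeness, evenly covered neighborhoods on $\fQ_\Gamma(G)$ are built in the standard way: for $[\rho] \in \ol{p}_*(\fQ_\Gamma(H))$ lift to $\rho \in \Hom(\Gamma, H)$, pick an evenly covered neighborhood $U \ni p_*(\rho)$ in $\Hom(\Gamma, G)$, and push the $G$-saturation of $U$ down to $\fQ_\Gamma(G)$; freeness keeps the sheets disjoint after passing to orbits, and the fiber over $[\rho]$ is identified with $\Hom(\Gamma, \ker p)$. That $\ol{p}_*(\fQ_\Gamma(H))$ is a union of path components of $\fQ_\Gamma(G)$ follows from openness of the quotient map: the image and its complement are images of saturated unions of path components in $\Hom(\Gamma, G)$, hence clopen in $\fQ_\Gamma(G)$, and clopen subsets of the semi-algebraic space $\fQ_\Gamma(G)$ are unions of path components. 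For the restriction to $\fX_\Gamma$, both equality (\ref{img}) and the restricted covering follow once one shows $\rho \in \Hom(\Gamma, H)^{ps}$ if and only if $p_*\rho \in \Hom(\Gamma, G)^{ps}$, which is trivial in the compact case and reduces in the reductive $\bbC$ case to preservation of orbit closedness under the central cover $p$. The main obstacle is the freeness argument above---the interaction of commutators with the torsion-free hypothesis is the heart of the matter---with a secondary technical check being preservation of polystability in the complex reductive case.
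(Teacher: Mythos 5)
Your setup (Goldman's lemma, triangulability, centrality of $\ker(p)$, descent of the $\Hom(\Gamma,\ker(p))$-action to $\fQ_\Gamma(H)$) matches the paper, but the core of your argument has a genuine gap: freeness of the descended action is not sufficient to conclude that the orbit map is a covering. A free action of a discrete group by homeomorphisms need not admit evenly covered neighborhoods (an irrational rotation of $S^1$ is the standard example), and passing from $\Hom(\Gamma,H)$, where the deck action \emph{is} properly discontinuous, to the conjugation quotient $\fQ_\Gamma(H)$ can a priori destroy proper discontinuity. Establishing it is the real heart of the theorem (Lemma~\ref{prop-disc-lemma}): one reduces to the universal cover $\wt{G}\isom\bbF^k\cross DG$, uses that $\pi(\ker(q))$ is a \emph{discrete} subgroup of $\bbF^k$ (Proposition~\ref{univ-cover}, where torsion-freeness enters), and exploits the fact that conjugation acts trivially on the $\bbF^k$-coordinate, so that small product neighborhoods stay disjoint from their nontrivial translates even after saturation by conjugation; the general $H$ is then handled by factoring through $\wt{G}$. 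Your proposal never invokes discreteness of $\pi(\ker(q))$, only a freeness statement. Relatedly, your identification of the fiber over $[\rho]$ with $\Hom(\Gamma,\ker(p))$ is asserted rather than proved: one must show that if $p_*\rho$ and $p_*\psi$ are $G$-conjugate then $[\psi]=k\cdot[\rho]$ for some $k$, which the paper does by lifting the conjugating element to $h\in H$ and checking that $\gamma\mapsto h\psi(\gamma)h^{-1}\rho(\gamma)^{-1}$ is a homomorphism into $\ker(p)$ (Lemma~\ref{embedding}).

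Two further problems. First, your justification of freeness rests on the false claim that $H\isom(\bbF^k/N)\cross DG$ with $\ker(p)\leqs\bbF^k/N$: already for $H=\wt{G}$ and $G=\Un(n)$ one has $\ker(q)=\{(m,e^{-2\pi i m/n}I):m\in\bbZ\}$, which is not contained in $\bbR\cross\{I\}$, and indeed $\Un(n)\not\isom S^1\cross\SUn(n)$ for $n\geqs 2$. What Proposition~\ref{univ-cover} actually gives is $\ker(q)\cap D\wt{G}=\{1\}$, from which $\ker(p)\cap DH=\{1\}$ does follow (if $xN\in\ker(p)\cap DH$ with $x\in\ker(q)$, write $x=d\nu$ with $d\in D\wt{G}$, $\nu\in N$; then $d=x\nu^{-1}\in\ker(q)\cap D\wt{G}=\{1\}$), so your freeness conclusion is salvageable even though your route to it is not. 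Second, equation (\ref{img}) is not a routine check: covering maps are not closed maps, so ``preservation of orbit closedness under $p$'' is precisely what requires an argument. The paper avoids working with orbits in $\Hom(\Gamma,H)$ altogether and instead argues with closed points of the quotients, using that covering maps carry closed points to closed points and that a point closed in a union of path components is closed in the ambient space.
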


In general, the base space or the total space for these coverings could be disconnected.  
We note that recent work of Sikora~\cite[Proposition  1]{Sikora-SOn} discusses the relationship between
$\fX_\Gamma (H)$ and $\fX_\Gamma (G)$ for finite covers $H\to G$ and arbitrary $\Gamma$, and Proposition 2 of the same work discusses the case where $\Gamma$ is free.

\begin{proof}  The proof will be broken down into several lemmas.

The group $K := \Hom(\Gamma, \ker (p))$ acts continuously on $\Hom(\Gamma, H)$ by multiplication: for $\phi\in\Hom (\Gamma, \ker (p))$ and $\rho\in \Hom(\Gamma, H)$, we set
$$\phi\cdot \rho (\gamma) = \phi (\gamma) \rho (\gamma).$$
Note that the order on the right does not matter, since $\phi(\gamma) \in \ker (p)$ is central (Lemma~\ref{central}); this also shows that $\phi\cdot \rho$ is a homomorphism.  Centrality of $\ker (p)$ also implies that this action descends to an action on  $\fQ_\Gamma (H)$, since conjugation is equivariant with respect to central multiplication.  (Note that the action carries closed sets to closed sets, and thus restricts to the polystable quotient $\fX_\Gamma(H)\subset \fQ_\Gamma (G)$.) 
The action becomes trivial after applying $p_*$, giving a commutative diagram
$$\xymatrix{ \fQ_\Gamma (H) \ar[r]^{\ol{p}_*} \ar[d] &\fQ_\Gamma (G) \\ \fQ_\Gamma (H)/ K. \ar@{-->}[ur]^\pi}$$

\begin{lemma} $\label{embedding}$ The induced map $\pi\co \fQ_\Gamma (H)/ K \to \fQ_\Gamma (G)$ is 
a homeomorphism onto its image.
\end{lemma}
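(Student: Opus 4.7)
The plan is to verify three properties of $\pi$ in turn: continuity, injectivity, and openness as a map onto its image. Continuity will be automatic from the construction: since $\ol{p}_* = \pi \circ \pi_K$ with $\pi_K\colon \fQ_\Gamma(H)\to \fQ_\Gamma(H)/K$ the quotient map and $\ol{p}_*$ continuous, the universal property of the quotient delivers continuity of $\pi$ at once.

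For injectivity, I would start from the assumption $\ol{p}_*([\rho_1]) = \ol{p}_*([\rho_2])$, which means $p\circ \rho_1 = g(p\circ \rho_2)g^{-1}$ for some $g\in G$. Using surjectivity of $p$, lift $g$ to some $h\in H$ and set $\phi(\gamma) = \rho_1(\gamma)\bigl(h\rho_2(\gamma)h^{-1}\bigr)^{-1}$. Then $\phi$ takes values in $\ker(p)$, which is central in $H$ by Lemma~\ref{central}. Centrality is exactly what is needed to check multiplicativity: expanding $\phi(\gamma_1\gamma_2)$, the central factor $\phi(\gamma_2)$ commutes past $h\rho_2(\gamma_1)h^{-1}$, yielding $\phi(\gamma_1)\phi(\gamma_2)$. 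So $\phi\in K$ and $\rho_1 = \phi\cdot (h\rho_2 h^{-1})$, meaning $[\rho_1]$ and $[\rho_2]$ represent the same class in $\fQ_\Gamma(H)/K$.

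For openness onto image, the key input will be Goldman's Lemma~\ref{Goldman} combined with Lemma~\ref{triang} (which applies since $H$ covers the real algebraic group $G$): together these show that $p_*\colon \Hom(\Gamma,H)\to \Hom(\Gamma,G)$ is a covering map onto its image, and that this image is a union of path components of $\Hom(\Gamma,G)$, hence open. Moreover $\Img(p_*)$ is $G$-saturated, since writing $g=p(h)$ gives $g\cdot p_*(\rho) \cdot g^{-1} = p_*(h\rho h^{-1})$. Given an open $U\subset \fQ_\Gamma(H)/K$, I would pull back along the composite quotient to an open, conjugation- and $K$-saturated set $\widetilde U\subset \Hom(\Gamma,H)$; then $p_*(\widetilde U)$ is open in $\Hom(\Gamma,G)$, and since the conjugation quotient $q_G$ is an open map, $\pi(U) = q_G(p_*(\widetilde U))$ is open in $\fQ_\Gamma(G)$ and \emph{a fortiori} open in the subspace $\pi(\fQ_\Gamma(H)/K)$.

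The main obstacle is the injectivity step, in particular checking that the pointwise quotient $\phi$ is actually a homomorphism; this is where centrality of $\ker(p)$ (and hence the hypothesis that $p$ is a \emph{covering} homomorphism) is indispensable. The topological half is largely formal once Goldman's lemma provides openness of $p_*$ onto its image and $G$-saturation of that image takes care of the interaction with the conjugation quotient.
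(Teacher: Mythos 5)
Your proposal is correct and follows essentially the same route as the paper: the injectivity argument (lifting the conjugating element, forming the pointwise quotient with values in the central kernel, and checking multiplicativity via centrality) is identical, and the openness argument rests on the same inputs, namely that Goldman's Lemma~\ref{Goldman} together with Lemma~\ref{triang} makes $p_*$ an open map onto its image and that conjugation quotients are open. The only cosmetic difference is that you chase saturated open sets explicitly (and invoke openness of $\Img(p_*)$ as a union of path components), whereas the paper factors the composite $\Hom(\Gamma,H)\to\Img(\ol{p}_*)$ and applies the elementary fact that a map is open once its precomposition with a surjection is open.
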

\begin{proof}  

To prove injectivity, we must show that if $\ol{p}_* [\rho] =  \ol{p}_*[\psi]$ for two representations $\rho, \psi\co \Gamma\to H$, then there exists an element $k\in K$ such that $k \cdot [\rho] = [\psi]$.  

Since $\ol{p}_* [\rho] = \ol{p}_* [\psi]$, there exists $g\in G$ such that for each $\gamma\in \Gamma$ we have
$$ g p(\psi (\gamma)) g^{-1} = p (\rho(\gamma)).$$
Choose an element $h \in H$ satisfying $p (h) = g$.   Then for each $\gamma\in \Gamma$, we have
$$p ( h \psi (\gamma) h^{-1}) = g p (\psi (\gamma)) g^{-1} =  p (\rho (\gamma)),$$
and we define $k(\gamma)\in \ker (p)$ by 
$$k(\gamma) = h \psi (\gamma) h^{-1} \rho (\gamma^{-1}).$$
We claim that $k\co \Gamma\to \ker(p)$ is a homomorphism.  Indeed, since $k(\gamma)\in \ker (p)$ is always central, we have
\begin{eqnarray*}
k(\gamma) k(\eta) & = \left(h \psi (\gamma) h^{-1} \rho (\gamma^{-1}) \right) \left(h \psi (\eta) h^{-1} \rho (\eta^{-1}) \right)\\
&= h \psi (\gamma) h^{-1} \left(h \psi (\eta) h^{-1} \rho (\eta^{-1}) \right)\rho (\gamma^{-1})\\
&= h \psi (\gamma \eta) h^{-1} \rho ( (\gamma\eta)^{-1})
= k(\gamma \eta).
\end{eqnarray*}
Since $k \cdot [\rho] = [\psi]$, this proves injectivity.

We must check that $\pi$ is open as a map to its image.
We use the following elementary fact:
\begin{enumerate}[($\star$)]
\item Given maps $X\srm{\alpha} Y \srm{\beta} Z$, if $\alpha$ is surjective and $\beta\circ \alpha$ is open, then $\beta$ is open as well.
\end{enumerate}
Consider the commutative diagram
$$\xymatrix{ \Hom(\Gamma, H) \ar@{->>}[r] \ar[d]^{p_*} & \fQ_\Gamma (H)/ K \ar[d]^\pi\\
\Img (p_*) \ar[r]^-f & \Img (\ol{p}_*) = \Img (p_*)/G.}
$$
Since the top map is surjective, ($\star$) tells us that in order to check that $\pi$ is open, it suffices to show that $f\circ p_*$ is open.  By Lemmas~\ref{Goldman} and~\ref{triang},  $p_*$ is a covering map (onto its image), and all covering maps are open.
Next,  $f$ is open because it is the quotient map for a group action, and it follows that $f\circ p_*$ is open.
\end{proof}

\begin{lemma}$\label{prop-disc-lemma}$ The action of $K=\Hom(\Gamma, \ker (p))$ on $\fQ_\Gamma (H)$ is properly discontinuous.
\end{lemma}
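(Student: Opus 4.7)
My approach would be to use Proposition~\ref{univ-cover} to split $H$ as a direct product of an abelian factor containing $\ker(p)$ and a simply connected semisimple factor on which $K$ acts trivially. Under this splitting, the $K$-action on $\fQ_\Gamma (H)$ becomes pure translation on the abelian factor, and proper discontinuity reduces to the standard fact that a discrete subgroup of a topological group acts properly discontinuously on itself by translation.

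The first step is to produce a Lie group decomposition
$$H \isom A \cross DG,$$
where $A$ is abelian and $\ker(p) \subset A$. Proposition~\ref{univ-cover} gives $\wt{G} = \bbF^k \cross DG$ with $DG$ simply connected, and $\ker(q) \subset \bbF^k$ discrete. Any covering $p\co H \to G$ factors through $\wt{G}$, so $H \isom \wt{G}/L$ for some subgroup $L \leqs \ker(q) \subset \bbF^k \cross \{e\}$. Since $L$ is central in $\wt{G}$ and meets $DG$ trivially, one gets $H \isom (\bbF^k/L) \cross DG$, with $\ker(p) = \ker(q)/L$ a discrete subgroup of $A := \bbF^k/L$.

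Next I would transport this splitting to the representation spaces. Because $A$ is central in $H$, the natural identification $\Hom(\Gamma, H) = \Hom(\Gamma, A) \cross \Hom(\Gamma, DG)$ intertwines $H$-conjugation with the trivial action on the first factor and ordinary $DG$-conjugation on the second, so
$$\fQ_\Gamma (H) \isom \Hom(\Gamma, A) \cross \fQ_\Gamma (DG).$$
Viewing $K = \Hom(\Gamma, \ker(p))$ inside $\Hom(\Gamma, A)$ via the inclusion $\ker(p) \subset A$, the induced $K$-action on $\fQ_\Gamma (H)$ is trivial on the $\fQ_\Gamma (DG)$ factor and is translation by $K$ on the topological abelian group $\Hom(\Gamma, A)$.

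Finally I would observe that $K$ is a discrete subgroup of $\Hom(\Gamma, A)$: evaluation at a finite generating set $\gamma_1, \ldots, \gamma_r$ for $\Gamma$ embeds $\Hom(\Gamma, A)$ as a closed subgroup of $A^r$, and $K$ lies in $(\ker p)^r$, which is discrete in $A^r$ since $\ker(p)$ is discrete in $A$. A standard argument---choose a symmetric identity neighborhood $W$ with $WW^{-1}$ meeting $K$ only at the identity---shows that a discrete subgroup of a topological group acts freely and properly discontinuously on itself by translation, and this transfers to the product $\fQ_\Gamma (H)$. The step I expect to be the main obstacle is the first one: extracting the product $H \isom A \cross DG$ genuinely uses the torsion-free hypothesis on $\pi_1(G)$, which via Proposition~\ref{univ-cover} is precisely what forces $\ker(p)$ into a central abelian factor of $H$ rather than entangling it with the semisimple part.
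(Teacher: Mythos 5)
There is a genuine gap in your first step: the decomposition $H \isom A \cross DG$ with $\ker(p) \subset A$ is false in general, because Proposition~\ref{univ-cover} does \emph{not} say that $\ker(q)$ lies in $\bbF^k \cross \{e\}$. It says only that the \emph{projection} $\pi\co \wt{G}\to\bbF^k$ restricts to an injection on $\ker(q)$ with discrete image; the elements of $\ker(q)$ typically have nontrivial components in the (finite) center of $D\wt{G}$. Concretely, for $G = \Un(n)$ one has $\wt{G} = \bbR\cross\SUn(n)$ and $\ker(q) = \{(m,\, e^{-2\pi i m/n}I) : m\in\bbZ\}$, which is not contained in $\bbR\cross\{I\}$; correspondingly $\wt{G}/\ker(q) = \Un(n)$ is not isomorphic to $S^1\cross\SUn(n)$ for $n\geqs 2$, so the deduction ``$H\isom(\bbF^k/L)\cross DG$'' fails. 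As a result, the $K$--action on $\fQ_\Gamma(H)$ is \emph{not} pure translation on an abelian factor: the element $\phi(\gamma)$ also multiplies the $DG$--component of $\rho(\gamma)$ by a central element of $DG$, so the reduction to ``a discrete subgroup acting on itself by translation'' does not go through as stated.

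The idea can be salvaged, and the repaired argument is essentially the paper's. What the torsion-free hypothesis actually buys (via Proposition~\ref{univ-cover}) is that the abelian \emph{shadow} of the action is faithful and discrete: a nontrivial $\phi\in K$ translates some $\bbF^k$--coordinate of $(\tilde g_1,\ldots,\tilde g_r)\in\wt{G}^r$ by a nonzero element of the discrete group $\pi(\ker(q))$, so neighborhoods of the form $B_{\epsilon/2}(a)\cross V$ (with $V\subset D\wt{G}$ arbitrary and $\epsilon$ the minimal norm of a nonzero element of $\pi(\ker(q))$) are moved off themselves regardless of what happens in the $D\wt{G}$--factor; since conjugation is trivial on the $\bbF^k$--coordinates, this descends to $\wt{G}^r/\wt{G}$. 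Your reduction to $\ker(p)^r$ acting on $H^r/H$ via evaluation at generators matches the paper. But you still need a separate step for a general cover $H\to G$: since $H$ itself need not split, the paper first proves the statement for $\wt{G}$ and then passes to $H^r/H \isom (\wt{G}^r/\wt{G})/\ker(p')^r$, where $p'\co\wt{G}\to H$ is the intermediate covering, using that a properly discontinuous action of $\ker(q)^r$ induces one of the quotient group $\ker(p)^r$ on the quotient space. Your proposal omits both of these points.
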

\begin{proof}  
Let $h \co F_r \to \Gamma$ be a surjection, where $F_r$ is the free group on $r$ generators.  Then precomposition with $h$ induces an injection $K\injects \Hom(F_r, \ker(p)) = \ker(p)^r$, as well as an embedding
$$\fQ_\Gamma (H) \injects \fQ_{F_r} (H) = H^r/H.$$
The multiplication actions of $K$ on $\fQ_\Gamma (H)$ and of $\ker (p)^r$ on $H^r/H$ are compatible, so it suffices to check that the action of $\ker(p)^r$ on $H^r/H$ is properly discontinuous.

We begin by considering the case where $H = \wt{G}$ is the universal cover, and the map $p=q$ is the universal covering map in Proposition~\ref{univ-cover}.
By Proposition~\ref{univ-cover}  we know that the projection $\pi\co \ker (p)\to \bbF^k$ is injective with image a discrete subgroup of $\bbF^k$.  Letting $\epsilon = \min\{ |x| : x\in \pi(\ker(p)), x\neq 0\}$, where $|x|$ is the norm of $x$,
consider an open neighborhood of the form $U:=B_{\epsilon/2} (a)\cross V$, with $V$ open in $DG$, and $B_{\epsilon/2} (a)$ the open ball of radius $\epsilon/2$ around $a\in \bbF^k$.  Then each such $U$ is translated off of itself by each non-trivial element in $\ker (p)$.  Considering products of such neighborhoods, we see that the action of $\ker (p)^r$ on 
$\wt{G}^r$ is properly discontinuous.  Saturating these open sets with respect to conjugation, and noting that conjugation is trivial on the $\bbF^k$ coordinate in each factor, we conclude that $\pi_1 (G)^r$ acts properly discontinuously on $\wt{G}^r/\wt{G}$.

Now consider the general case of $p\co H\to G$, and let $q\co \wt{G}\to G$ be the universal covering homomorphism as above.  Then there exists a unique (universal) covering map $p'\co \wt{G}\to H$ satisfying $p' (e) = e$, and this map is automatically a homomorphism.  We have shown that $\ker(q)^r$ acts properly discontinuously on $\wt{G}^r/\wt{G}$, so the subgroup $\ker(p')^r$ acts properly discontinuously as well, and thus $(\ker(q)^r)/(\ker(p')^r) = \ker(p)^r$ acts properly discontinuously on 
$$(\wt{G}^r/\wt{G})/ (\ker(p')^r).$$
Finally,
$$(\wt{G}^r/\wt{G})/ (\ker(p')^r) \homeo H^r/H,$$
by applying Lemma~\ref{embedding} in the case $\Gamma = F_r$; note that $\Hom(\Gamma, H)$ is locally path connected by Lemma~\ref{triang}.
\end{proof}

Finally, we consider the image of $\ol{p}_* \co \fQ_\Gamma (H) \to \fQ_\Gamma (G)$.

\begin{lemma}$\label{image}$ The image $\ol{p}_* ( \fQ_\Gamma (H) )$ is a union of path components inside $\fQ_\Gamma (G)$.
\end{lemma}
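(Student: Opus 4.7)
The plan is to pull the question back up to $\Hom(\Gamma, G)$, where Lemma~\ref{Goldman} already identifies $A := p_*(\Hom(\Gamma, H))$ as a union of path components, and then transfer the conclusion down to $\fQ_\Gamma(G)$ by exploiting the fact that the orbit map is open. The point is to avoid any attempt to lift paths from $\fQ_\Gamma(G)$ to $\Hom(\Gamma, G)$, which is not in general possible through a group quotient.

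First I would set $A := p_*(\Hom(\Gamma, H)) \subset \Hom(\Gamma, G)$. By Lemma~\ref{Goldman}, $A$ is a union of path components of $\Hom(\Gamma, G)$, and by Lemma~\ref{triang} the space $\Hom(\Gamma, G)$ is locally path connected (in the present setting $G$ is compact Lie or reductive $\bbC$, both algebraic), so path components are open and hence both $A$ and its complement $A^c$ are open.

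Next I would verify that $A$ is $G$-invariant under conjugation: given $\rho = p\circ \sigma \in A$ and $g\in G$, lift $g$ to any $h\in H$ with $p(h)=g$ and observe that $g\rho g^{-1} = p\circ (h\sigma h^{-1}) \in A$. Together with the defining surjection $A \twoheadrightarrow \ol{p}_*(\fQ_\Gamma(H))$ this yields the exact saturation $q_G^{-1}(\ol{p}_*(\fQ_\Gamma(H))) = A$, where $q_G \co \Hom(\Gamma, G) \to \fQ_\Gamma(G)$ is the orbit map. The decisive input is that $q_G$ is an \emph{open} map: for any open $U\subset \Hom(\Gamma,G)$ the saturation $q_G^{-1}(q_G(U)) = \bigcup_{g\in G} g\cdot U$ is a union of open sets. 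Applying this to $A$ and to $A^c$ shows that $q_G(A)$ and $q_G(A^c)$ are both open in $\fQ_\Gamma(G)$; by the $G$-invariance just established they are disjoint and together cover $\fQ_\Gamma(G)$, so $\ol{p}_*(\fQ_\Gamma(H)) = q_G(A)$ is clopen.

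Finally, to upgrade from clopen to a union of path components, I would take an arbitrary path $\alpha\co [0,1]\to \fQ_\Gamma(G)$ with $\alpha(0) \in \ol{p}_*(\fQ_\Gamma(H))$. The sets $\alpha^{-1}(q_G(A))$ and $\alpha^{-1}(q_G(A^c))$ are disjoint open subsets of $[0,1]$ whose union is $[0,1]$; connectedness of $[0,1]$ forces the second to be empty, so $\alpha$ stays in $\ol{p}_*(\fQ_\Gamma(H))$ throughout. This exhibits $\ol{p}_*(\fQ_\Gamma(H))$ as a union of path components, as required. The main obstacle one must plan around is the failure of path-lifting for the orbit map; the clopen-plus-connectedness-of-$[0,1]$ argument bypasses it, and no path-lifting from $\fQ_\Gamma(G)$ is ever used.
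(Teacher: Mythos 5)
Your proof is correct and follows essentially the same route as the paper's: both rest on Goldman's lemma plus local path-connectedness to produce a clopen, $G$-saturated subset of $\Hom(\Gamma,G)$ whose image in $\fQ_\Gamma(G)$ is therefore clopen, and then use connectedness of $[0,1]$ to conclude that paths cannot leave it. The only cosmetic difference is that the paper decomposes $\Hom(\Gamma,G)$ into all of its path components and observes that the quotient splits topologically into the pieces $P_i/G$, whereas you work directly with $A=\Img(p_*)$, verifying its $G$-invariance by lifting the conjugating element; the two arguments amount to the same thing.
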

\begin{proof} Since $\Hom(\Gamma, G)$ is triangulable, it is the disjoint union, topologically, of its path components $\{P_i\}_i$, and 
since $G$ is (path) connected, the action of  $G$ on $\Hom(\Gamma, G)$ preserves path components.  Hence the quotient space $\fQ_\Gamma (G)$ is the disjoint union, topologically, of $\{P_i/G\}_i$.  Now if $[\rho] = \ol{p}_* ([\psi])$ and $[\rho]$ is connected by a path in $\fQ_\Gamma (G)$ to $[\rho']$, then $\rho$ and $\rho'$ lie in the same path component $P_i$ of $\Hom(\Gamma, G)$.  By Lemma~\ref{Goldman}, the image of $p_*$ is a union of path components inside $\Hom(\Gamma, G)$, so $\rho\in \Img (p_*)$ implies $\rho'\in \Img (p_*)$ as well.  
\end{proof}

This completes the proof of Theorem~\ref{prop-discont} for the  map $\ol{p}_*\co\fQ_\Gamma (G) \to \fQ_\Gamma (H)$.

Now we consider the restriction of $\ol{p}_*$ to $\fX_\Gamma (H)$.  Since $\fX_\Gamma (H)$ is invariant under the group $K$ of deck transformations, the restriction of $\ol{p}_*$ to $\fX_\Gamma (H)$ is still a normal covering map with structure group $K$.   We need to prove (\ref{img}).

The moduli spaces $\fQ_\Gamma (G)$ and $\fQ_\Gamma (H)$ do not have good separation properties when $G$ and $H$ are non-compact, so we need some technical lemmas regarding closed points.  

\begin{lemma}$\label{closed-pts}$ Let $p\co X\to Y$ be a covering map between arbitrary topological spaces.  If $x\in X$ is a closed point, then $p(x)\in Y$ is also a closed point.
\end{lemma}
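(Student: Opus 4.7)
The plan is to show that the complement $Y \setminus \{p(x)\}$ is open by constructing, for each point $y \in Y$ with $y \neq p(x)$, an open neighborhood in $Y$ which avoids $p(x)$. Write $y_0 := p(x)$.

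Given such a $y$, I would begin by choosing an evenly covered open neighborhood $U$ of $y$, writing $p^{-1}(U) = \bigsqcup_\alpha U_\alpha$ with each $p|_{U_\alpha}\co U_\alpha \to U$ a homeomorphism. The argument then splits into two cases. If $y_0 \notin U$, then $U$ itself is the desired neighborhood of $y$ avoiding $y_0$, and there is nothing more to do.

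The more substantive case is when $y_0 \in U$. Then $x \in p^{-1}(U)$, so $x$ lies in a unique sheet $U_0$. I would exploit the hypothesis that $\{x\}$ is closed in $X$ together with the fact that $U_0$ is open in $X$ to conclude that $U_0 \setminus \{x\}$ is open in $U_0$, and hence in $X$. Transporting this across the homeomorphism $p|_{U_0}\co U_0 \to U$ shows that $U \setminus \{y_0\} = p(U_0 \setminus \{x\})$ is open in $U$, and therefore open in $Y$. Since $y \in U$ and $y \neq y_0$, this set is the required neighborhood of $y$.

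There is no real obstacle here; the only subtlety is recognizing that one cannot excise $y_0$ from $Y$ directly, but must first restrict to an evenly covered neighborhood, pick out the specific sheet containing $x$, and then use that a homeomorphism sends closed points to closed points. Note that nothing in the argument requires $p$ to be surjective, and it applies to covering maps between arbitrary topological spaces.
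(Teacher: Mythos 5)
Your proof is correct and is essentially the same argument as the paper's: both localize to an evenly covered neighborhood $U$ of a test point $y$, use the homeomorphism from the sheet containing $x$ onto $U$ to see that $\{p(x)\}$ is closed in $U$, and conclude. The only difference is cosmetic framing — you show the complement of $\{p(x)\}$ is open (hence the case split on whether $p(x)\in U$), while the paper shows any $y$ in the closure of $\{p(x)\}$ must equal $p(x)$.
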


Note that in general, covering maps are not closed maps; for instance the usual covering $\exp\co \bbR\to S^1$ sends the closed set $\{n+1/n \,:\, n= 2, \ldots\}$ to a set with $1\in S^1$ as a limit point.

\begin{proof}  
Let $y\in Y$ be a point in the closure of $p(x)$, and let $U\subset Y$ be an open neighborhood of $y$ over which $p$ is trivial.   Since $y$ is in the closure of $p(x)$, we have $p(x) \in U$, and we claim that $p(x)$ is closed in the relative topology on $U$.  Triviality of $p$ over $U$ implies that $x\in p^{-1} (U)$ lies in a subset $U_1\subset p^{-1} (U)$ for which $p\co U_1\to U$ is a homeomorphism.  Since $x$ is closed in $X$, it is also closed in the relative topology on $U_1$, and since $p\co U_1\to U$ is a homeomorphism, we find that $p(x)$ is closed in the relative topology on $U$, as claimed.  
Now we can write $\{p(x)\} = C\cap U$ for some closed set $C\subset Y$, and since $y$ is in the closure of $p(x)$ we have $y\in C$.  But $y\in U$ by choice of $U$, so in fact $y\in C\cap U$ and hence $y = p(x)$. 
\end{proof}

\begin{lemma}$\label{closed}$ Let $X$ be an arbitrary topological space, and let $P \subset X$ be a union of path components of $X$.  If $x\in P$ is closed as a subset of $P$, then $x$ is also closed as a subset of $X$.
\end{lemma}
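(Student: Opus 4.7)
The plan is to reduce the claim to showing that $\overline{\{x\}}^X \subset P$, after which the result follows formally: by the definition of the subspace topology, $\overline{\{x\}}^X \cap P = \overline{\{x\}}^P$, and the hypothesis that $\{x\}$ is closed in $P$ gives $\overline{\{x\}}^P = \{x\}$, so $\overline{\{x\}}^X = \{x\}$, meaning $\{x\}$ is closed in $X$.

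Since $P$ contains the entire path component of $x$, the containment $\overline{\{x\}}^X \subset P$ will follow from the purely point-set fact that $\overline{\{x\}}^X$ sits inside the path component of $x$ in $X$. To prove this fact I would fix an arbitrary $y \in \overline{\{x\}}^X$ and construct an explicit path $f\co [0,1]\to X$ from $x$ to $y$ by the formula $f(t) = x$ for $t \in [0,1)$ and $f(1) = y$. The only non-trivial verification is continuity at $t=1$: for an open neighborhood $U$ of $y$, the hypothesis $y \in \overline{\{x\}}^X$ forces every open neighborhood of $y$ to meet $\{x\}$, so $x \in U$, and hence $f$ sends all of $[0,1]$ into $U$. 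At every point of $[0,1)$ continuity is immediate since $f$ is locally constant there.

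The main (and essentially only) subtlety is remembering the ad-hoc path $f$ that realizes the passage from $x$ to $y$; no separation axioms, local path connectedness, or algebraic structure on $X$ are needed. Once this path is in hand the remainder of the argument is just a one-line manipulation with the subspace topology, and the lemma follows.
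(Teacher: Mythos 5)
Your proof is correct and is essentially the same as the paper's: both establish that the closure of a singleton is path connected by constructing the path that is constant at $x$ on $[0,1)$ and jumps to $y$ at $t=1$, verify continuity using the fact that every open neighborhood of $y \in \overline{\{x\}}$ contains $x$, and then deduce $\overline{\{x\}}^X \subset P$, from which closedness in $X$ follows by elementary subspace-topology bookkeeping.
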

\begin{proof}  For any point $y\in X$, the closure $\ol{y}$ of $\{y\}$ in $X$ is path connected.  Indeed, say $z\in\ol{y}$, and consider  the function $f\co [0,1]\to \{y, z\}$ defined by $f(t) = y$ for $t<1$ and $f(1) = z$.
The set $\{z\}$ is \e{not} open in $\ol{y}$, so $f$ is a continuous map.  

Now say $x\in P$ is closed as a subset of $P$, that is, $\{x\} = C \cap P$ for some closed set $C\subset X$.  
From the above, we have $\ol{x} \subset P$, and $\ol{x} \subset C$ by definition, so $\ol{x} \subset P\cap C = \{x\}$.  Hence $x$ is closed in $X$.
\end{proof}

Lemma~\ref{closed-pts} shows that all points in $\ol{p}_* ( \fX_\Gamma (H))$ are closed inside $\ol{p}_* ( \fQ_\Gamma (H))$, and Lemmas~\ref{image} and~\ref{closed} show that in fact these points are closed in $\fQ_\Gamma (G)$.  Thus $\ol{p}_* ( \fX_\Gamma (H)) \subset \fX_\Gamma (G)$, and to prove (\ref{img}) it remains to check that
$$ \ol{p}_* (\fQ_\Gamma (H)) \cap  \fX_\Gamma (G) \subset \ol{p}_* ( \fX_\Gamma (H)).$$
Say $[\rho] \in  \ol{p}_* (\fQ_\Gamma (H)) \cap  \fX_\Gamma (G) $.  
Then $[\rho]$ is a closed point in the quotient $\fQ_\Gamma (G)$, so its fiber $(\ol{p}_*)^{-1} ([\rho])  \subset \fQ_\Gamma (H)$ is a closed subset.  But $\ol{p}_*$ is a covering map, so this inverse image has the discrete topology, and hence each point in $(\ol{p}_*)^{-1} ([\rho])$ is actually closed in $\fQ_\Gamma (H)$.   This proves  (\ref{img}).

Finally, we must show that $\ol{p}_* ( \fX_\Gamma (H))$ is a union of path components inside $\fX_\Gamma (G)$.  If $[\rho] \in \ol{p}_* ( \fX_\Gamma (H))$ and $[\psi] \in  \fX_\Gamma (G)$ is connected to $[\rho]$ by a path, then
$[\psi]\in \ol{p}_* ( \fQ_\Gamma (H))$ by Lemma~\ref{image}.  So $[\psi] = p_* [\psi']$ for some $[\psi']$, and we just need to check that $[\psi']$ is closed.  But we have just seen that the pre-image of a closed point consists only of closed points.
This completes the proof of Proposition~\ref{prop-discont}. 
\end{proof}

\begin{corollary}  $\label{pi}$
Let $G$ be either a connected reductive $\C$-group or a connected compact Lie group, and assume that $\pi_1 (G)$ is torsion-free.  Let $H \srt{p} G$ be a covering homomorphism. 
Then the maps
 $$\fQ_{\Gamma} (H) \srm{q_*} \fQ_\Gamma (G) \textrm{ and } 
 \fX_\Gamma (H)\srm{q_*} \fX_\Gamma (G)$$
 induce isomorphisms on homotopy groups $\pi_k$ for all $k\geqs 2$ {\rm (}and all compatible basepoints{\rm )}.
 \end{corollary}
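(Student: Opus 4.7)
The plan is to deduce this directly from Theorem~\ref{prop-discont} using two standard observations from algebraic topology. First, any covering map $p\co \tilde{X}\to X$ induces isomorphisms $p_*\co \pi_k(\tilde{X},\tilde{x})\to\pi_k(X,p(\tilde{x}))$ for all $k\geqs 2$ and all basepoints $\tilde{x}$; this is a classical consequence of the homotopy lifting property, since a based map $(S^k,*)\to (X,p(\tilde{x}))$ with $k\geqs 2$ is nullhomotopic exactly when its unique lift to $(\tilde{X},\tilde{x})$ is nullhomotopic. Second, if $P\subset X$ is a union of path components, then the inclusion $P\injects X$ induces an isomorphism on $\pi_k$ based at any point of $P$, for every $k\geqs 0$.

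By Theorem~\ref{prop-discont}, the map $\ol{p}_*\co \fQ_\Gamma(H) \to \ol{p}_*(\fQ_\Gamma(H))\subset \fQ_\Gamma(G)$ is a covering map, and its image is a union of path components of $\fQ_\Gamma(G)$. Fix a basepoint $[\rho]\in \fQ_\Gamma(H)$ with image $\ol{p}_*[\rho]\in \fQ_\Gamma(G)$. The first observation gives an isomorphism
$$(\ol{p}_*)_*\co \pi_k(\fQ_\Gamma(H),[\rho]) \srt{\isom} \pi_k(\ol{p}_*(\fQ_\Gamma(H)), \ol{p}_*[\rho])$$
for all $k\geqs 2$, and the second gives an isomorphism
$$\pi_k(\ol{p}_*(\fQ_\Gamma(H)), \ol{p}_*[\rho]) \srt{\isom} \pi_k(\fQ_\Gamma(G), \ol{p}_*[\rho])$$
for all $k\geqs 0$. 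Composing these two yields the desired isomorphism for $\fQ_\Gamma$.

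The argument for $\fX_\Gamma$ is identical: Theorem~\ref{prop-discont} furnishes the parallel statements that $\ol{p}_*\co \fX_\Gamma(H)\to \ol{p}_*(\fX_\Gamma(H))$ is a covering map and that its image is a union of path components of $\fX_\Gamma(G)$, so the same two-step reasoning applies verbatim.

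No substantive obstacle is expected here; the corollary is a formal consequence of Theorem~\ref{prop-discont} together with the standard homotopy-theoretic behavior of covering maps and of inclusions of unions of path components.
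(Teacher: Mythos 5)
Your proof is correct and is precisely the reasoning the paper leaves implicit: the corollary is stated without proof as an immediate consequence of Theorem~\ref{prop-discont}, via exactly the two standard facts you invoke (covering maps induce isomorphisms on $\pi_k$ for $k\geqs 2$, and inclusions of unions of path components induce isomorphisms on all homotopy groups at basepoints in the image).
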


\begin{question}
Do Theorem~\ref{prop-discont} and Corollary~\ref{pi} hold for coverings of connected {\it real} reductive algebraic groups {\rm (}or even more general connected Lie groups{\rm )} with $\pi_1 (DG)$ trivial?
\end{question}

 We end this section with an example  to show that Theorem~\ref{prop-discont}  does not extend to all covering maps between compact, connected Lie groups.  This example will also be important in Section~\ref{stable-sec}.

\begin{example}$\label{SU(2)}$
The moduli space $\fX_{\bbZ^2} (\SUn(2))$ is homeomorphic to $S^2$.  This can be seen from a variety of perspectives.  
For instance, the homeomorphism $(\SUn(2)\cross \SUn(2))/\SUn(2) \isom B^3$ {\rm (}where $\SUn(2)$ acts by conjugation in each coordinate and $B^3$ is the unit ball in $\bbR^3)$ carries the subspace $\fX_{\bbZ^2} (\SUn(2))$ to the boundary sphere {\rm (}see \cite{MoSh}, \cite{JW}, \cite{brCo}, \cite{FlLaFree}{\rm )} 
Alternatively,~\cite[Proposition 6.5]{Adem-Cohen-Gomez} uses the Abel--Jacobi map to show that $\fX_{\bbZ^2} (\SUn(2)) \isom \bbC \textrm{P}^{1}$.  

The projection map  $\SUn(2) \to \SUn(2)/\{\pm I\} = \mathrm{P}\SUn(2)$ is a covering homomorphism with structure group $\bbZ/2\bbZ$.  
A direct generalization of Proposition~\ref{prop-discont} would say that the induced map
$\fX_{\bbZ^2} (\SUn(2)) \to \fX_{\bbZ^2} (\mathrm{P}\SUn(2))$ is a covering with structure group $\Hom(\bbZ^2, \bbZ/2\bbZ) = (\bbZ/2\bbZ)^2$, but this is impossible; in fact there is no free action of $(\bbZ/2\bbZ)^2$ on $S^2$, since the quotient space for such an action would be a closed surface with fundamental group $(\bbZ/2\bbZ)^2$.  We note that in fact $\fX_{\bbZ^2} (\mathrm{P}\SUn(2)) \isom S^2$, as can be checked by direct computation.  
\end{example}

\section{Universal covers and fundamental groups of moduli spaces}$\label{univ-cover-sec}$

In this section we show that in certain cases, the covering maps constructed in the previous sections are in fact universal covering maps, leading to new results on fundamental groups.  Throughout this section, $\wt{G}\srt{q} G$ will denote the universal covering homomorphism for the Lie group $G$, and $DG\leqs G$ will denote the commutator subgroup.

In this section we will focus on \e{exponent-canceling} groups.

\begin{definition}  Given a set $S$, let $F_S$ denote the free group on $S$.  We say that a word $R\in F_S$ is exponent-canceling if $R$ maps to the identity in the free Abelian group on $S$ {\rm (}in other words, $R$ lies in the commutator subgroup of $F_S${\rm )}.   

Let $\Gamma$ be a finitely generated discrete group.  If $\Gamma$ admits a presentation 
$$\langle \gamma_1, \ldots, \gamma_r \,|\, \{R_i\}_i\rangle$$
in which each word $R_i$ is exponent-canceling, then we say that $\Gamma$ is exponent-canceling.
We will refer to a generating set in an exponent-canceling group $\Gamma$ as \e{standard} if there is a presentation of $\Gamma$ with these generators in which all relations are exponent-canceling.  The number $r$ of generators in a standard presentation will be called the \e{rank} of $\Gamma${\rm;} note the Abelianization of $\Gamma$ is 
always of the form $\bbZ^r$.
\end{definition} 

Examples of exponent-canceling groups include   free groups, free Abelian groups, right-angled Artin groups, and fundamental groups of closed Riemann surfaces, as well as the universal central extensions of surface groups considered in~\cite{BGG}.
We note that the class of exponent-canceling groups is closed under free and direct products.

\begin{proposition}$\label{surj}$ If $\Gamma$ is an exponent-canceling group of rank $r$, then the covering maps considered in Theorem~\ref{prop-discont} are always surjective, and the structure group $\Hom(\Gamma, \ker(p))$ is simply $\ker (p)^r$.
\end{proposition}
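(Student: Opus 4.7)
My plan is to split the statement into two pieces and dispatch the easier one first. The identification of the structure group is a formal calculation: by Lemma~\ref{central}, $\ker(p)$ is central in $H$ and hence abelian, so every homomorphism $\Gamma\to\ker(p)$ factors through the abelianization $\Gamma^{\mathrm{ab}}$. Because $\Gamma$ is exponent-canceling of rank $r$, one has $\Gamma^{\mathrm{ab}}\cong\bbZ^r$, giving $\Hom(\Gamma,\ker(p))\cong\Hom(\bbZ^r,\ker(p))\cong\ker(p)^r$ as required.

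The substantive claim is surjectivity. Once I establish that $p_*\co\Hom(\Gamma,H)\to\Hom(\Gamma,G)$ is surjective, surjectivity of $\ol{p}_*$ on $\fQ_\Gamma$ follows immediately by passing to quotients, and surjectivity on $\fX_\Gamma$ then follows from the identity $\ol{p}_*(\fX_\Gamma(H)) = \ol{p}_*(\fQ_\Gamma(H))\cap\fX_\Gamma(G)$ proved in Theorem~\ref{prop-discont}. Moreover, every covering $p\co H\to G$ factors through the universal cover as $\wt{G}\srm{p'}H\srm{p}G$, so it suffices to lift any $\rho\co\Gamma\to G$ all the way up to $\wt{G}$ and then post-compose with $p'$.

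The main step exploits the product decomposition supplied by Proposition~\ref{univ-cover}: $\wt{G} = \bbF^k\cross DG$, with the restriction $q|_{DG}$ equal to the inclusion $DG\hookrightarrow G$. I fix a standard presentation $\langle\gamma_1,\dots,\gamma_r\mid\{R_i\}\rangle$ of $\Gamma$ and, using surjectivity of $q$, choose arbitrary lifts $\tilde{h}_j = (v_j,d_j)\in\bbF^k\cross DG$ of $\rho(\gamma_j)$. I then evaluate each relation $R_i$ on these lifts coordinatewise. The $\bbF^k$-coordinate of $R_i(\tilde{h}_1,\ldots,\tilde{h}_r)$ is the image of $R_i$ in the abelianization of $F_r$ applied to $(v_1,\ldots,v_r)$, which vanishes because $R_i$ is exponent-canceling. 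The $DG$-coordinate is $R_i(d_1,\ldots,d_r)$, and applying the inclusion $q|_{DG}$ yields $R_i(\rho(\gamma_1),\ldots,\rho(\gamma_r)) = e$; injectivity of this inclusion then forces $R_i(d_1,\ldots,d_r) = e$ in $DG$ as well. Thus the $\tilde{h}_j$ already satisfy every relation, and $\gamma_j\mapsto\tilde{h}_j$ extends to the desired lift $\tilde{\rho}\co\Gamma\to\wt{G}$.

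The main conceptual point (rather than a technical obstacle) is to recognize that the obstruction to lifting a representation is concentrated entirely in the abelian factor $\bbF^k$ of $\wt{G}$ and is governed by the image of the relations in $\Gamma^{\mathrm{ab}}$; the exponent-canceling hypothesis is exactly what annihilates this obstruction. Once Proposition~\ref{univ-cover} is in hand, the whole of surjectivity reduces to this short coordinatewise verification.
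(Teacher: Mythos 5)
Your proposal is correct and follows essentially the same route as the paper: identify the structure group via the abelianization, reduce surjectivity to the universal cover $\wt{G}=\bbF^k\cross DG$ using Proposition~\ref{univ-cover}, lift the images of standard generators arbitrarily, and observe that the exponent-canceling hypothesis kills the relator in the central $\bbF^k$ factor while pushing down by $q$ forces the relator to vanish in the $DG$ factor. The only cosmetic difference is that the paper verifies the relation first in $G$ (using centrality of the $q(\tilde t_i)$) and then coordinatewise in $\wt{G}$, whereas you run the same computation in the opposite order.
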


\begin{proof} The isomorphism $\Hom(\Gamma, \ker(p))\isom \ker (p)^r$ follows from the facts that  $\ker (p)$ is Abelian and the Abelianization of $\Gamma$ is $\bbZ^r$.

Now we consider surjectivity.  Let $G$ be as in Theorem~\ref{prop-discont}.  Since the universal covering homomorphism $q\co \wt{G}\to G$ factors through every covering $p\co H\to G$, it suffices to check surjectivity of $\ol{q}_*$.  (Note that by Equation (\ref{img}) in Theorem~\ref{prop-discont}, surjectivity on the full moduli spaces implies surjectivity on the polystable quotients.)

     Take any element $[\rho]\in \fQ_\Gamma(G)$.  Then $[\rho]$ lifts to $\rho=(g_1, \ldots, g_r)\in \Hom(\Gamma, G)$, and we can lift each $g_i$ to $\tilde{g}_i\in \wt{G}$.  By Proposition~\ref{univ-cover}, for each $i$ we may write $\tilde{g}_i = (\tilde{t}_i, g_i')\in \bbF^k \cross DG$ (where $\bbF = \bbR$ or $\bbC$ depending on whether $G$ is compact or complex).
 Now $g_i = q(\tilde{t}_i) q(g'_i) = q(\tilde{t}_i) g'_i$, and $t_i := q(\tilde{t}_i)$ is central in $G$.  For each relation $R(\gamma_1,\ldots ,\gamma_r)$ in $\Gamma$, we have $R(g_1,\ldots ,g_r)=e$ where $e$ is the identity.  Thus, 
$$e=R(t_1g_1',\ldots ,t_rg_r')=R(t_1, \ldots, t_r) R(g_1',\ldots, g_r') = R(g_1',\ldots, g_r')$$ 
since $t_1,\ldots ,t_r$ are central in $G$ and $R$ is exponent-canceling.  Similarly, 
$$R(\tilde{g}_1, \ldots, \tilde{g}_r)=R((\tilde{t_1},g_1'),\ldots ,(\tilde{t_r},g_r'))=(R(\tilde{t}_1,\ldots ,\tilde{t}_r),R(g_1', \ldots, g_r'))=e.$$  
Therefore, $\tilde{\rho}=(\tilde{g}_1, \ldots,\tilde{g}_r)$ satisfies the relations of $\Gamma$ and hence $\tilde{\rho} \in \Hom(\Gamma, \wt{G})$.  Since $q_* (\tilde{\rho}) = \rho$, we conclude that $q_*$ is surjective, and surjectivity of $\ol{q}_*$ follows.  
 \end{proof}

\begin{lemma}$\label{heqs}$
Let $G$ be either a connected compact Lie group, or a connected reductive $\bbC$--group.  Assume that $\pi_1 (G)$ is torsion-free
and $\Gamma$ is a finitely generated group.  Then there are homotopy equivalences 
\begin{equation}\label{heq}\Hom(\Gamma, \wt{G}) \heq \Hom (\Gamma, DG) \textrm{ and } \fX_{\Gamma}(\wt{G}) \heq  \fX_{\Gamma}(DG).\end{equation}
\end{lemma}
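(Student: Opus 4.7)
The plan is to leverage Proposition~\ref{univ-cover} directly: in the torsion-free setting it gives $\wt{G} = \bbF^k \cross DG$ with $\bbF^k$ central, so that the universal covering map $q$ restricts to the inclusion $DG\injects G$. This produces a natural product decomposition
\[\Hom(\Gamma, \wt{G}) \isom \Hom(\Gamma, \bbF^k) \cross \Hom(\Gamma, DG).\]

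First I would show the first factor is contractible. Since $\bbF^k$ is abelian, $\Hom(\Gamma, \bbF^k) \isom \Hom(\Gamma^{\mathrm{ab}}, \bbF^k)$, and since $\bbF^k$ is torsion-free and divisible, the torsion part of $\Gamma^{\mathrm{ab}}$ maps trivially, so this space is isomorphic to $(\bbF^k)^r$ where $r$ is the free rank of $\Gamma^{\mathrm{ab}}$. This is a real or complex Euclidean space, hence contractible. Projecting away the $\bbF^k$ factor yields the first homotopy equivalence in~(\ref{heq}).

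For the second equivalence, I would use that $\bbF^k$ is central in $\wt{G}$, so conjugation by any $(a,d)\in \bbF^k \cross DG$ fixes the first coordinate of $(\phi(\gamma), \psi(\gamma))\in \bbF^k \cross DG$ and acts on the second via conjugation by $d$ alone. Thus the $\wt{G}$-action on $\Hom(\Gamma, \wt{G})$ is the product of the trivial action on $\Hom(\Gamma, \bbF^k)$ with the conjugation action of $DG$ on $\Hom(\Gamma, DG)$ (the $\bbF^k$-factor of $\wt{G}$ acts trivially on both sides). Passing to orbit spaces gives
\[\fQ_\Gamma(\wt{G}) \isom \Hom(\Gamma, \bbF^k) \cross \fQ_\Gamma(DG),\]
and contractibility of the first factor gives the analogous homotopy equivalence at the level of $\fQ_\Gamma$.

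For the polystable quotient $\fX_\Gamma$, the key point is that an orbit of $(\phi,\psi)$ under $\wt{G}$ is precisely $\{\phi\} \cross (DG \cdot \psi)$, so closedness of this orbit in the product is equivalent to closedness of $DG\cdot \psi$ in $\Hom(\Gamma, DG)$. Hence $\Hom(\Gamma, \wt{G})^{ps} = \Hom(\Gamma, \bbF^k) \cross \Hom(\Gamma, DG)^{ps}$ as a subspace, and taking the quotient gives $\fX_\Gamma(\wt{G}) \isom \Hom(\Gamma, \bbF^k) \cross \fX_\Gamma(DG)$. Projecting to the second factor yields the desired homotopy equivalence. I expect no serious obstacles here; the one subtle point, verifying that closed orbits in the product correspond exactly to closed orbits in the $DG$-factor, is routine because the $\bbF^k$-coordinate is preserved by the action and the projection to that factor is continuous.
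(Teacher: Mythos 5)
Your proposal is correct and follows essentially the same route as the paper: both use the decomposition $\wt{G}\isom \bbF^k\cross DG$ from Proposition~\ref{univ-cover}, the identification $\Hom(\Gamma,\bbF^k)\isom \bbF^{kr}$ (contractible), and centrality of $\bbF^k$ to split the conjugation action and hence the quotients, with the polystable condition restricting factorwise. Your extra care in checking that closed orbits in the product correspond to closed $DG$-orbits just fills in a detail the paper leaves implicit.
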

\begin{proof} By Proposition~\ref{univ-cover}, $\wt{G} \isom \bbF^k \cross DG$ for some $k\geqs 0$, and hence
$$\Hom (\Gamma, \wt{G}) \isom \Hom(\Gamma, \bbF^k)\cross \Hom(\Gamma, DG)  \isom \bbF^{kr} \cross  \Hom(\Gamma, DG) \heq  \Hom(\Gamma, DG),$$ 
where $r$ is the rank of the Abelianization of $\Gamma$.  Since $\bbF^{k}$ is central in $\wt{G}$, the conjugation action of $\wt{G}$ reduces to that of $DG$, so we have $\fQ_{\Gamma}(\wt{G})\cong \bbF^{rk}\times \fQ_{\Gamma}(DG)$.
This homeomorphism restricts to a homeomorphism $\fX_{\Gamma}(\wt{G})\cong \bbF^{rk}\times \fX_{\Gamma}(DG)$ between the subsets of closed points, and $\bbF^{rk}\times \fX_{\Gamma}(DG) \heq  \fX_{\Gamma}(DG)$.
\end{proof}

\begin{proposition}$\label{Hom}$
Let $G$ be either a connected compact Lie group, or a connected reductive $\bbC$--group.  Assume that $\pi_1 (G)$ is torsion-free
and $\Gamma$ is exponent-canceling of rank $r$.   If $\fX_\Gamma (DG)$ is simply connected, then the map
$$\fX_{\Gamma} (\wt{G}) \srm{q_*} \fX_\Gamma (G)$$
is a universal covering map.  Consequently, $\pi_1 (\fX_\Gamma (G)) \isom \pi_1 (G)^r$.
\end{proposition}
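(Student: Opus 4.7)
The plan is to assemble the pieces already established in the paper. The first step is to invoke Theorem~\ref{prop-discont}, which tells me that $q_* \co \fX_\Gamma(\wt{G}) \to \fX_\Gamma(G)$ is a normal covering map onto its image, with structure group $\Hom(\Gamma, \ker(q))$. Since $\Gamma$ is exponent-canceling of rank $r$, Proposition~\ref{surj} identifies this structure group with $\ker(q)^r$ and guarantees that $q_*$ is in fact surjective. Because $q\co \wt{G}\to G$ is the universal covering homomorphism, $\ker(q) \isom \pi_1(G)$, and the structure group becomes $\pi_1(G)^r$.

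The second step is to verify that the total space $\fX_\Gamma(\wt{G})$ is simply connected. This is where Lemma~\ref{heqs} does the work: it gives a homotopy equivalence $\fX_\Gamma(\wt{G}) \heq \fX_\Gamma(DG)$, so the standing hypothesis that $\fX_\Gamma(DG)$ is simply connected immediately propagates to $\fX_\Gamma(\wt{G})$. In particular $\fX_\Gamma(\wt{G})$ is path-connected, hence so is its image $\fX_\Gamma(G)$ under the continuous map $q_*$.

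The final step is the standard fact from covering space theory: a normal covering map whose total space is simply connected (and hence path-connected) is a universal covering map, and the deck transformation group is canonically isomorphic to the fundamental group of the base. Applied here, this identifies $\pi_1(\fX_\Gamma(G))$ with the structure group $\pi_1(G)^r$.

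In terms of difficulty, there is really no substantive obstacle left once the machinery of Sections~\ref{ModuliSection} and~\ref{univ-cover-sec} is in hand; the proof is essentially a bookkeeping exercise. The only minor thing I would be careful about is confirming that the statement makes sense at the level of connected components—specifically, that $\fX_\Gamma(\wt{G})$ being simply connected is a genuine global statement (not merely a statement about one path component) and that the covering really is surjective onto a path-connected base. Both points are handled by the surjectivity in Proposition~\ref{surj} together with Lemma~\ref{heqs}.
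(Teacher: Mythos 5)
Your proposal is correct and follows essentially the same route as the paper's own (very brief) proof: Theorem~\ref{prop-discont} for the normal covering with structure group $\pi_1(G)^r$, Proposition~\ref{surj} for surjectivity, and Lemma~\ref{heqs} to transfer simple connectivity from $\fX_\Gamma(DG)$ to $\fX_\Gamma(\wt{G})$. Your explicit attention to surjectivity and path-connectedness is a point the paper leaves implicit, but there is no substantive difference in approach.
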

\begin{proof} By Theorem~\ref{prop-discont}, we know that $q_*$ is a covering map with structure group $\pi_1 (G)^r$, and since $\fX_{\Gamma} (\wt{G})\heq \fX_{\Gamma}(DG)$ by Lemma~\ref{heqs}, the result follows.
\end{proof}

As we will explain below, $\fX_\Gamma (DG)$ is simply connected whenever the representation space
$\Hom(\Gamma, DG)$ is simply connected (see Section~\ref{lifting-sec}).

\begin{remark} Since we have assumed that $\pi_1 (G)$ is torsion-free, it is  isomorphic to $\bbZ^{k}$ for some $k\geqs 0$.  Thus the group $\pi_1 (\fX_\Gamma (G)) \isom \pi_1 (G)^r$ in Proposition~\ref{Hom} is free Abelian as well.
We have $G=\textrm{Rad}(G)DG$ {\rm (}see for example~\cite[Theorem 1.29]{milneRG}{\rm )}, and the projection from $G$ to the torus $G/DG = \textrm{Rad}(G)/(\textrm{Rad}(G)\cap DG)$ induces an isomorphism on $\pi_1$.  Since $\textrm{Rad}(G)\cap DG$ is central it is finite, so we find that $k = \dim_\bbF (\textrm{Rad}(G)/(\textrm{Rad}(G)\cap DG)) = \dim_\bbF (\textrm{Rad}(G))$ {\rm (}where $\bbF = \bbR$ or $\bbC$ depending on whether $G$ is real or complex{\rm )}.
The rank of $\pi_1 (G)$ is also the same as the number $k$ in the decomposition $\wt{G} \isom \bbF^k \cross DG$ {\rm (}Proposition~\ref{univ-cover}{\rm )}
since $q\co \bbF^k \cross DG\to G$ restricts to a covering $\bbF^k \to \textrm{Rad}(G)$.
\end{remark}
 
\subsection{Free groups}

\begin{corollary}$\label{free-univ-cover}$
Let $\Gamma = F_r$ be the free group of rank $r$.  Let $G$ be either a connected compact Lie group, or a connected reductive $\bbC$--group.  If $\pi_1 (G)$ is torsion-free, then the map $\fX_\Gamma (\wt{G}) \srm{q_*} \fX_\Gamma (G)$ is the universal covering map.  In particular, $\pi_1  (\fX_\Gamma (G)) \isom \pi_1 (G)^r.$
\end{corollary}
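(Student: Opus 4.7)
The plan is to deduce this directly from Proposition~\ref{Hom}, whose hypotheses are almost immediate to check once the general machinery of Section~\ref{univ-cover-sec} is in place. First I would observe that the free group $F_r$ is exponent-canceling of rank $r$: it has an empty set of relations (each of which is vacuously in the commutator subgroup of $F_r$), and its Abelianization is $\bbZ^r$. Thus Proposition~\ref{Hom} applies as soon as we verify that $\fX_{F_r}(DG)$ is simply connected.

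To verify the simple connectivity hypothesis, I would first use Proposition~\ref{univ-cover}: since $\pi_1(G)$ is torsion-free, $DG$ is simply connected and the universal covering homomorphism has the form $q\co \bbF^k\cross DG\to G$. Evaluating on a free basis gives a homeomorphism $\Hom(F_r, DG)\isom DG^r$, which is a finite product of simply connected spaces and therefore simply connected. It then remains to pass from simple connectivity of the representation variety $\Hom(F_r, DG)$ to simple connectivity of the moduli space $\fX_{F_r}(DG)$; this is precisely the path-lifting statement advertised in the remark immediately after Proposition~\ref{Hom}, and will be provided by the results of Section~\ref{lifting-sec}. Once this passage is available, Proposition~\ref{Hom} yields at once that $\fX_{F_r}(\wt G)\srm{q_*}\fX_{F_r}(G)$ is the universal cover with deck group $\pi_1(G)^r$.

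The main obstacle is the step from $\pi_1(\Hom(F_r, DG))=1$ to $\pi_1(\fX_{F_r}(DG))=1$. The quotient map $\Hom(F_r, DG)^{ps}\to \fX_{F_r}(DG)$ is the orbit map for the conjugation action of the connected group $DG$, but this action is far from free (e.g.\ the whole center fixes every representation), so one cannot invoke a fibration argument naively. In the compact case one can hope to exploit that any loop in $\fX_{F_r}(DG)$ can be lifted to a path in $\Hom(F_r, DG)$ whose endpoints lie in a single $DG$-orbit, then close the lift up using a path in the connected group $DG$; in the complex case one needs the GIT path-lifting lemma of Section~\ref{lifting-sec} to replace a loop in the polystable quotient by a loop in the pre-image, after which the same closing-up argument goes through using connectedness of $DG$. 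Either way, surjectivity on $\pi_1$ of the quotient map, combined with $\pi_1(\Hom(F_r, DG))=1$, forces $\pi_1(\fX_{F_r}(DG))=1$ as required.

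With these three ingredients in hand—exponent-cancellation for $F_r$, the product decomposition $\Hom(F_r, DG)\cong DG^r$ with $DG$ simply connected, and the path-lifting step—the corollary is an immediate specialization of Proposition~\ref{Hom}, and the identification $\pi_1(\fX_{F_r}(G))\isom \pi_1(G)^r$ follows from the structure-group computation in Theorem~\ref{prop-discont} together with Proposition~\ref{surj}.
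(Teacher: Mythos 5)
Your proposal is correct, and its skeleton matches the paper's: both reduce the corollary to Proposition~\ref{Hom} by verifying that $\fX_{F_r}(DG)$ is simply connected, using Proposition~\ref{univ-cover} to get $\pi_1(DG)=1$ from the torsion-freeness of $\pi_1(G)$. Where you diverge is in how that simple connectivity is established. The paper simply cites \cite{FlLaFree} for the fact that $\fX_{F_r}(DG)$ is simply connected whenever $DG$ is, whereas you derive it directly: $\Hom(F_r,DG)\isom DG^r$ is simply connected, and the path-lifting results of Section~\ref{lifting-sec} (Lemma~\ref{pathlifting} in the reductive case, Bredon's theorem in the compact case) let you push simple connectivity down to the quotient via the lift-a-loop-and-close-it-up-in-the-fiber argument. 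This is exactly the alternative route the paper itself sanctions at the end of Section~\ref{lifting-sec}, where it remarks that the path-lifting proof ``can be adapted to the free group case as well''; your version is more self-contained, at the cost of having to justify path-connectedness of the fibers of the quotient map. On that last point, one small caveat: in the complex reductive case the fiber of $\Hom(F_r,DG)\to \Hom(F_r,DG)\quot DG$ over a polystable class $[\rho]$ is not literally the single orbit $DG/\Stab(\rho)$ but the union of all orbits whose closures contain that closed orbit; this union is still path-connected (each orbit is connected since $DG$ is, and degenerates to the closed orbit along a one-parameter subgroup), so the argument goes through, but it is worth saying---the paper elides the same point.
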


\begin{proof} 
It suffices, in light of Proposition~\ref{Hom}, to prove that $\fX_{F_r}(DG)$ is simply connected.   Since $\pi_1(G)$ is torsion-free, $\pi_1(DG)=1$ (Proposition~\ref{univ-cover}).  In \cite{FlLaFree}, it is shown (using methods similar to those below) that for both the complex reductive and compact cases, $\fX_{F_r}(DG)$ is simply connected whenever $DG$ is simply connected. 
\end{proof}

\begin{remark}
In \cite{BiLa}, without the assumption that $\pi_1(G)$ is torsion-free, part of this result is extended.  In particular, it is shown that $\pi_1  (\fX_\Gamma (G))\isom \pi_1 (G/DG)^r$ when $G$ is either a connected reductive $\bbC$--groups or a connected compact Lie group {\rm (}this is consistent with our result given Remark~\ref{torsion-rmk}{\rm )}.
\end{remark}

By Casimiro et. al.~\cite[Theorem 4.7]{CFLO}, the character varieties $\fX_{F_r} (G)$ and  $\fX_{F_r} (K)$ are homotopy equivalent whenever $G$ is a real $K$--reductive algebraic group (see \cite[Definition 2.1]{CFLO}).
Here $K\leqs G$ is a maximal compact subgroup.  Since $\pi_1 (G) \isom \pi_1 (K)$, we obtain the following consequence of Corollary~\ref{free-univ-cover}.

\begin{corollary}$\label{real-red-free}$ Let $G$ be a connected real reductive Lie group with $\pi_1 (G)$ torsion-free.
Then $\pi_1  (\fX_{F_r} (G)) \isom \pi_1 (G)^r$.
\end{corollary}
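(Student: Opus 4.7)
The plan is to reduce the real reductive case directly to the compact case already handled in Corollary~\ref{free-univ-cover}. Since the statement only concerns $\pi_1$, any homotopy equivalence between $\fX_{F_r}(G)$ and a space whose fundamental group we already know will suffice.

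First, I would invoke Casimiro--Florentino--Lawton--Oliveira~\cite[Theorem 4.7]{CFLO}, as suggested in the remark preceding the statement, to obtain a homotopy equivalence
\[
\fX_{F_r}(G) \heq \fX_{F_r}(K),
\]
where $K \leqs G$ is a maximal compact subgroup. I would note that the hypothesis "real $K$-reductive" from \cite[Definition 2.1]{CFLO} is satisfied for any connected real reductive Lie group in the sense used here, and that the relevant homotopy equivalence is natural enough that it induces an isomorphism on $\pi_1$.

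Next, I would observe that for a connected real reductive Lie group $G$, the inclusion $K \injects G$ of a maximal compact subgroup is a homotopy equivalence (via the Cartan decomposition $G \homeo K \cross \bbR^N$), so in particular $\pi_1(K) \isom \pi_1(G)$. Consequently the hypothesis that $\pi_1(G)$ is torsion-free transfers to $\pi_1(K)$. Since $K$ is a connected compact Lie group with torsion-free fundamental group, Corollary~\ref{free-univ-cover} applies and yields
\[
\pi_1\bigl(\fX_{F_r}(K)\bigr) \isom \pi_1(K)^r.
\]
Combining this with the homotopy equivalence above gives $\pi_1(\fX_{F_r}(G)) \isom \pi_1(K)^r \isom \pi_1(G)^r$, as desired.

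There is essentially no obstacle here: the argument is a two-line chain of isomorphisms, with the heavy lifting done by \cite[Theorem 4.7]{CFLO} (which imports the real reductive problem into the compact setting) and by Corollary~\ref{free-univ-cover} (which handles the compact setting via the universal covering $q_*\co \fX_{F_r}(\wt{K}) \to \fX_{F_r}(K)$). The only point that warrants a brief sentence of care is confirming that $\pi_1(G)$ torsion-free is equivalent to $\pi_1(K)$ torsion-free, which follows from the standard deformation retraction of a real reductive Lie group onto its maximal compact subgroup.
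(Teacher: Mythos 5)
Your proposal is correct and follows exactly the paper's own argument: the paper deduces this corollary from \cite[Theorem 4.7]{CFLO} (giving $\fX_{F_r}(G)\heq\fX_{F_r}(K)$ for a maximal compact $K\leqs G$), the isomorphism $\pi_1(G)\isom\pi_1(K)$, and Corollary~\ref{free-univ-cover} applied to $K$. Your added remarks about transferring the torsion-free hypothesis via the Cartan decomposition are just making explicit what the paper leaves implicit.
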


\subsection{Free Abelian groups}$\label{Abelian-sec}$

We say that a semisimple Lie group is \e{orthogonal-free} if it is a direct product of simply connected groups of type $A_n$ or type $C_n$.

\begin{corollary}$\label{free-ab-univ-cover}$
Let $\Gamma = \bbZ^r$ be the free Abelian group of rank $r$.  Let $G$ be either a connected compact Lie group, or a connected reductive $\bbC$--group.  Assume either $(a)$ that $r\geqs 3$ and $DG$ is orthogonal-free, or  $(b)$ that $r=1,2$ and $\pi_1 (G)$ is torsion-free.  Then 
the map $\fX_\Gamma (\wt{G}) \srm{q_*} \fX_\Gamma (G)$ is a  universal covering map.  In particular, $\pi_1 (\fX_\Gamma(G)) \isom \pi_1 (G)^r.$
\end{corollary}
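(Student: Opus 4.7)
The plan is to verify the hypotheses of Proposition~\ref{Hom} with $\Gamma = \bbZ^r$, which is exponent-canceling of rank $r$. Note that $\pi_1(G)$ is torsion-free in both cases: by assumption in~(b), and by Proposition~\ref{torsion-free} in~(a), since the orthogonal-free hypothesis forces $DG$ to be simply connected. Thus it suffices to show that $\fX_{\bbZ^r}(DG)$ is simply connected, noting that $DG$ is simply connected (by hypothesis in~(a), and by Proposition~\ref{univ-cover} in~(b)).

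The first step is to identify $\fX_{\bbZ^r}(DG)$ with $T^r/W$, where $T\leqs DG$ is a maximal torus and $W = N(T)/T$ is the Weyl group. In the complex reductive case, a polystable $\rho\co\bbZ^r\to DG$ has Zariski-closed, abelian, reductive image, which is necessarily a torus, and is therefore $DG$-conjugate into $T$. In the compact case every representation is polystable, and one must show directly that every commuting $r$-tuple in $DG$ is simultaneously $DG$-conjugate into $T$: for $r=1,2$ this is Borel's theorem for simply connected compact groups; for $r\geqs 3$ this is the classical theorem of Borel--Friedman--Morgan, valid precisely when $DG$ is a product of simply connected groups of type $A_n$ or $C_n$ --- i.e., the orthogonal-free hypothesis. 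In all cases, the natural map $T^r/W \to \fX_{\bbZ^r}(DG)$ is a homeomorphism.

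The second step is to show $T^r/W$ is simply connected via Armstrong's theorem. Write $T = \ft/\Lambda$ with $\Lambda = \ker(\exp\co\ft\to T)$; since $DG$ is simply connected, $\Lambda$ coincides with the coroot lattice $Q^\vee$. Then $T^r/W \isom \ft^r/(\Lambda^r\rtimes W)$, with $W$ acting diagonally on the simply connected space $\ft^r$. Armstrong's theorem identifies $\pi_1(T^r/W)$ with the quotient of $\Lambda^r\rtimes W$ by the normal subgroup $E$ generated by elements with nonempty fixed set. For each simple root $\alpha$ and each $(k_1,\ldots,k_r)\in\bbZ^r$, the element $(k_1\alpha^\vee,\ldots,k_r\alpha^\vee,s_\alpha)$ fixes any point $(v_1,\ldots,v_r)\in\ft^r$ with $\langle\alpha,v_i\rangle = k_i$; as $\alpha$ ranges over simple roots and the $k_i$ over $\bbZ$, such elements generate $\Lambda^r\rtimes W$, since simple coroots generate $Q^\vee$ and simple reflections generate $W$. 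Hence $E = \Lambda^r\rtimes W$ and $\pi_1(T^r/W)=1$. The main obstacle is the identification $\fX_{\bbZ^r}(DG)\isom T^r/W$ in the compact case for $r\geqs 3$: here the orthogonal-free hypothesis is essential because $\Hom(\bbZ^r,DG)$ acquires extra components of ``almost commuting'' tuples outside types $A$ and $C$ (Kac, Borel--Friedman--Morgan), so the moduli space would not be exhausted by tuples lying in $T$.
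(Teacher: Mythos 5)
Your approach is genuinely different from the paper's: you construct an explicit model $\fX_{\bbZ^r}(DG)\isom T^r/W$ and then apply Armstrong's theorem to the affine action of $\Lambda^r\rtimes W$ on $\ft^r$, whereas the paper proceeds via path-lifting for GIT/compact quotients (Lemma~\ref{pathlifting} and its compact analogue) together with the simple-connectivity of $\Hom(\bbZ^r,DG)$ established by Gomez--Pettet--Souto and Pettet--Souto, feeding into Proposition~\ref{Hom} exactly as you do. The Armstrong computation you give is correct: setting $k_i = \delta_{ij}$ produces the translation by $\alpha^\vee$ in the $j$-th coordinate after multiplying by $s_\alpha$, so elements with fixed points do generate $\Lambda^r\rtimes W$.

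However, there is a real gap in your first step, in the complex reductive case. You assert that a polystable $\rho\co\bbZ^r\to DG$ has ``Zariski-closed, abelian, reductive image, which is necessarily a torus.'' This reasoning is incorrect: the Zariski closure of an abelian group of semisimple elements is a reductive abelian algebraic group, but such a group need not be connected -- it is in general a torus times a finite abelian group, and the finite part can fail to lie in any maximal torus of $DG$. Indeed, for $r\geqs 3$ and $DG$ not orthogonal-free (e.g., $DG = \mathrm{Spin}(7,\bbC)$), there exist commuting triples of semisimple elements generating a non-toral elementary abelian $2$-subgroup; these give polystable representations not conjugate into $T$. So the statement you are relying on is false in general, and a proof must invoke the orthogonal-free hypothesis (or $r\leqs 2$), which your complex-case argument does not do. You are aware of this issue in the compact case -- where you correctly cite Borel and Borel--Friedman--Morgan/Kac--Smilga -- but the complex case needs the same input, for instance via the Pettet--Souto deformation retraction of $\Hom(\bbZ^r,DG)$ onto $\Hom(\bbZ^r,DK)$, or a direct appeal to the algebraic-group analogues of those results. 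A secondary point worth flagging: injectivity of $T^r/W\to\fX_{\bbZ^r}(DG)$, i.e.\ that $G$-conjugacy of tuples in $T^r$ implies $W$-conjugacy, also requires an argument (it rests on connectedness of iterated centralizers, which is where simple-connectivity of $DG$ -- and in types $A$, $C$ the simple-connectivity of derived subgroups of centralizers -- enters), and should not be treated as automatic.
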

\begin{proof}
As in the case of free groups, 
by Proposition~\ref{Hom} it  suffices to prove that $\fX_\Gamma (DG)$ is simply connected.
We remind the reader that in the compact cases $\fQ_\Gamma(G)=\fX_\Gamma(G)$ since compact orbits are always closed.  
We begin by noting that if $DG$ is orthogonal-free, it is a product of simply connected groups, and hence simply connected itself.  Thus by Proposition~\ref{torsion-free}, $\pi_1 (G)$ is torsion-free in this case as well.

It is known that $\Hom(\bbZ^r, H)$ is path-connected for any $r\geqs 3$ and $H$ orthogonal-free, or $r=2$ and $H$ simply connected, or $r=1$.  The compact case is due to  Kac--Smilga~\cite{Kac-Smilga} (for $r > 3$, see \cite[Theorem D1]{Kac-Smilga}).  Another proof in the compact, orthogonal-free case is given in~\cite[Corollary 2.4]{Adem-Cohen}, and the reductive case is due to Pettet--Souto~\cite{PeSo}.  Therefore, $\fQ_{\bbZ^r} (DG)$ is path-connected in the  cases of interest.  
By results in Florentino--Lawton~\cite{FlLaAbelian}, $\fX_{\bbZ^r} (DG)$ is also path-connected in these cases 
(these results apply since $G$ is assumed connected).
As explained in Corollary 5.20 of \cite{FlLaAbelian}, simple-connectivity of $\fX_{\bbZ^r} (DG)$ now follows from 
work of Gomez--Pettet--Souto~\cite{GPS} in the compact case, and the same holds in the complex case by Theorem 1.1 in \cite{FlLaAbelian}.

\end{proof}

\begin{remark} As explained in \cite{FlLaAbelian}, using results from \cite{Kac-Smilga}, the above cases are the  only cases where the  $\fX_{\bbZ^r} (G)$ is connected when G is semisimple and $r\geqs 3$.
\end{remark}

As in the previous section, we can extend our computation of fundamental groups to the real reductive case, using the deformation retraction in Florentino--Lawton~\cite[Theorem 1.1]{FlLaAbelian}.

\begin{corollary}\label{real-red-abelian} Let $G$ be a real reductive Lie group with  maximal compact subgroup $K \leqs G$, and assume either that  $DK$ is orthogonal-free, or that $r=1,2$ and $\pi_1 (K)$ is torsion-free. Then 
$\pi_1 (\fX_{\bbZ^r} (G)) \isom \pi_1 (G)^r$.
\end{corollary}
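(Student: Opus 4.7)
The plan is to bootstrap from the compact case handled in Corollary~\ref{free-ab-univ-cover}, using the deformation retraction of Florentino--Lawton~\cite[Theorem 1.1]{FlLaAbelian}. That theorem provides a strong deformation retraction $\fX_{\bbZ^r}(G)\simeq \fX_{\bbZ^r}(K)$ whenever $G$ is a real reductive Lie group with maximal compact subgroup $K$, and in particular it induces an isomorphism $\pi_1(\fX_{\bbZ^r}(G)) \isom \pi_1(\fX_{\bbZ^r}(K))$.

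Next I apply Corollary~\ref{free-ab-univ-cover} to the compact connected Lie group $K$. If $DK$ is orthogonal-free then by definition it is a product of simply connected simple factors, hence itself simply connected, and Proposition~\ref{torsion-free} then forces $\pi_1(K)$ to be torsion-free; so in every branch of the hypothesis either case (a) of Corollary~\ref{free-ab-univ-cover} (when $r\geqs 3$ with $DK$ orthogonal-free) or case (b) (when $r=1,2$) applies to $K$, yielding $\pi_1(\fX_{\bbZ^r}(K)) \isom \pi_1(K)^r$. Finally, the Cartan decomposition shows that the inclusion $K\injects G$ is a homotopy equivalence, so $\pi_1(K) \isom \pi_1(G)$, and concatenating the three isomorphisms gives the claim.

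There is no substantive obstacle here: the argument is exactly the Abelian analogue of the passage from Corollary~\ref{free-univ-cover} to Corollary~\ref{real-red-free} in the free-group case, with \cite[Theorem 1.1]{FlLaAbelian} playing the role that \cite[Theorem 4.7]{CFLO} plays there. The only bookkeeping point is verifying that the orthogonal-free hypothesis transfers cleanly to cover both the $r\geqs 3$ and $r=1,2$ subcases of the compact corollary, which is exactly what Proposition~\ref{torsion-free} supplies.
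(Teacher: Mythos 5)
Your argument is exactly the paper's: the corollary is deduced from Corollary~\ref{free-ab-univ-cover} applied to $K$ via the deformation retraction $\fX_{\bbZ^r}(G)\heq \fX_{\bbZ^r}(K)$ of \cite[Theorem 1.1]{FlLaAbelian} together with $\pi_1(K)\isom\pi_1(G)$, and your observation that orthogonal-freeness of $DK$ forces $\pi_1(K)$ torsion-free (so both subcases of the compact corollary are available) matches the reduction already made in the proof of Corollary~\ref{free-ab-univ-cover}. No issues.
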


\subsection{Path-lifting for reductive actions}$\label{lifting-sec}$

It is possible to prove the previous results without using the deformation retraction results \cite[Theorem 1.1]{FlLaFree} and \cite[Theorem 1.1]{FlLaAbelian}.
In Bredon~\cite[Theorem II.6.2]{Bredon-transf} it is shown that for any Hausdorff space $X$ with a compact Lie group $K$ acting, the natural map $X\to X/K$ has the path-lifting property.  In \cite{KPR} this is generalized to affine varieties with complex reductive group actions, and in \cite{BiLaRa} it is generalized to real reductive group actions.  We provide a proof here to make the article more self-contained; this will be used in the next section to study surface groups.

\begin{lemma}\label{pathlifting}
Let $X$ be a complex affine variety, and let $G$ be a connected reductive $\C$-group.  If $G$ acts rationally on $X$, then the map $X\to X\quot G$ has the path-lifting property.
\end{lemma}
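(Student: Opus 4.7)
The plan is to reduce to Bredon's path-lifting theorem \cite{Bredon-transf} for the action of a maximal compact subgroup $K\leqs G$ on the Kempf--Ness set, using some standard GIT technology. I would begin by $G$-equivariantly embedding $X$ as a closed subvariety of a finite-dimensional rational $G$-module $V$, then fixing a maximal compact subgroup $K\leqs G$ together with a $K$-invariant Hermitian norm $\|\cdot\|$ on $V$. The Kempf--Ness set
$$KN(X)=\{x\in X:\|x\|\leqs\|g\cdot x\|\text{ for all }g\in G\}$$
lies in the polystable locus, meets each closed $G$-orbit in a single $K$-orbit, and fits into a homeomorphism $KN(X)/K\cong X\quot G$ induced by the inclusion $KN(X)\hookrightarrow X$. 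Furthermore, Neeman's theorem on the topology of quotient varieties supplies a continuous, $K$-equivariant, $\pi$-fiber-preserving strong deformation retraction $\rho\co X\times[0,1]\to X$ of $X$ onto $KN(X)$: $\rho(x,0)=x$, $\rho(x,1)\in KN(X)\cap\overline{G\cdot x}$, and $\pi(\rho(x,s))=\pi(x)$ for all $x$ and $s$.

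Given $\gamma\co[0,1]\to X\quot G$ and $x_0\in\pi^{-1}(\gamma(0))$, I would set $y_0:=\rho(x_0,1)\in KN(X)\cap\pi^{-1}(\gamma(0))$ and apply Bredon's theorem to the action of the compact Lie group $K$ on the Hausdorff space $KN(X)$, producing a continuous lift $\tilde\gamma_K\co[0,1]\to KN(X)$ of $\gamma$ with $\tilde\gamma_K(0)=y_0$ and $\pi\circ\tilde\gamma_K=\gamma$. This gives a lift of $\gamma$ to $X$, but one starting at $y_0$ rather than at the prescribed basepoint $x_0$.

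The main obstacle is the final step: modifying $\tilde\gamma_K$ near $t=0$ so that the resulting path begins at $x_0$ without altering the parametrization of $\gamma$. When $x_0$ lies on a non-closed orbit, no single element of $G$ carries $y_0$ to $x_0$, and the retraction path $s\mapsto\rho(x_0,1-s)$ connects $y_0$ to $x_0$ entirely within $\pi^{-1}(\gamma(0))$ only at the cost of reparametrizing $\gamma$. I would resolve this by using the fiber-preserving map $\rho$ to deform $\tilde\gamma_K$ pointwise along the $\pi$-fibers for small $t$, appealing to Luna's \'etale slice theorem in a $G$-invariant neighborhood of the closed orbit $G\cdot y_0$ to arrange that the deformation can be done continuously with initial value $x_0$. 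This local-to-global assembly, together with the $G$-equivariance of $\rho$, is the heart of the argument and follows the strategy of \cite{KPR}.
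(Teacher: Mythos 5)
Your first two paragraphs reproduce the paper's own argument: reduce to the Kempf--Ness set $V=KN(X)$, observe $V/K\cong X\quot G$, apply Bredon's path-lifting theorem to the $K$-action on $V$, and compose with the inclusion $V\hookrightarrow X$. One point the paper makes that you skip over: Bredon's Theorem II.6.2 is proved under the hypothesis that the action admits slices, and the paper verifies this by noting that $V$ is a real algebraic set and citing the Montgomery--Yang slice theorem \cite{MY}. Your appeal to ``the Hausdorff space $KN(X)$'' by itself is not a valid invocation of Bredon; you should add the slice verification.

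Your third paragraph, which takes up most of your effort, addresses a question the paper does not raise: arranging the lift to start at a prescribed $x_0\in X$ that may lie outside $V$. The paper only asserts, and only uses, the weaker fact that every path in $X\quot G$ admits \emph{some} lift to $X$ (with starting point taken in $V$). This is all that is needed in Section~\ref{lifting-sec}, where a loop in $\fX_\Gamma(DG)$ is lifted to a path in $\Hom(\Gamma,DG)$ whose two endpoints lie in a single path-connected fiber, and path-connectedness of that fiber then gives $\pi_1$-surjectivity. You correctly notice that the stronger basepointed statement does not follow formally from the reduction to $V$ when $x_0\notin V$; your proposed fix via Neeman's fiber-preserving retraction together with Luna's slice theorem is a reasonable strategy (and in the spirit of Kraft--Petrie--Randall \cite{KPR}), but it is left as a sketch of ``the heart of the argument'' rather than carried out. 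For the lemma as actually used in this paper, you can stop after the Bredon lift and composition with $V\hookrightarrow X$ and avoid that extra complication entirely.
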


\begin{proof}
By Kempf-Ness~\cite{KN}, there exists a real algebraic subset $V\subset X$ such that $X\quot G$ is homeomorphic to $V/K$, where $K$ is a maximal compact subgroup of $G$ (see also \cite{Sch1}).  Moreover, the natural diagram $$\xymatrix{V \ar@{->>}[r] \ar@{^{(}->>}[d] & V/K\ar[d]^{\cong} \\ X \ar[r] & X\quot G}$$ commutes. So we need only verify that $V\to V/K$ has the path-lifting property.  However, since $V$ is algebraic, it satisfies the conditions in \cite{MY} for there to be a slice at each point.  And as shown in \cite[page 91]{Bredon-transf}, this implies that there exist a lift of each path in $V/K$ to $V$.
\end{proof}

The main results in \cite{PeSo} and \cite{GPS} imply that $\Hom(\bbZ^r,G)$ is simply connected whenever $G$ is simply connected {\rm (}compact or complex reductive{\rm )}.  Therefore, by Lemma~\ref{pathlifting} and its compact analogue in \cite{Bredon-transf}, $\fX_{\Z^r}(DG)$ is simply connected in the cases considered in Corollary~\ref{free-ab-univ-cover} since $DG$ is path-connected {\rm (}note that the conjugation action of $G$ is rational since $G$ is an algebraic group{\rm )}.  Here is the proof:  we can lift any loop based at $[\rho]$ in the quotient to a path with its ends in the path-connected fiber $DG/\mathrm{Stab}([\rho])$; thus the quotient map is $\pi_1$-surjective, and the total space is simply connected. This gives a different proof of Corollary~\ref{free-ab-univ-cover} {\rm (}which can be adapted  to the free group case as well{\rm )}.

\subsection{Surface groups}$\label{surface-sec}$

Let $M^g$ be the closed orientable surface of genus $g\geq 1$.  In this section we focus on surface groups $\Gamma^g := \pi_1 M^g$, which are exponent-canceling of rank $2g$.  Note that the $g=1$ case was handled in the last subsection.

In \cite{Li-surface-groups}, it is shown that when $G$ is a connected semisimple $\C$-group and $g>1$, there is a bijection 
$\pi_0(\Hom(\Gamma^g)) = \pi_1 (G)$, and moreover, when $\pi_1(G)=1$ it is shown that $\Hom(\Gamma^g,G)$ is simply connected.  Therefore, by Lemma~\ref{pathlifting} and \cite[Lemma 4.11]{FlLaAbelian}, we conclude:

\begin{theorem}\label{wowhowdopeoplenotknowthis}
$\fX_{\Gamma^g}(G)$ is  simply connected for $g > 1$ and $G$ a simply connected semisimple $\C$-group.
\end{theorem}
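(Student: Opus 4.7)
The plan is to combine the simple connectedness of the representation variety $\Hom(\Gamma^g,G)$ (provided by Li's theorem, cited above) with the path-lifting property of the GIT quotient map, via the standard transfer of simple connectedness through a quotient with path-connected fibers.

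First I would observe that $\Hom(\Gamma^g,G)$ is a complex affine variety, cut out inside $G^{2g}$ by the single commutator relation $\prod_{i=1}^{g}[a_i,b_i]=1$, and that $G$ acts rationally on it by simultaneous conjugation. Since $\pi_1(G)=1$, Li's theorem gives that $\Hom(\Gamma^g,G)$ is simply connected. Lemma~\ref{pathlifting} then applies directly and yields the path-lifting property for $\pi\co\Hom(\Gamma^g,G)\maps\fX_{\Gamma^g}(G)=\Hom(\Gamma^g,G)\quot G$.

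With these ingredients in place, I would fix a polystable basepoint $\rho$ so that $[\rho]$ serves as a basepoint in $\fX_{\Gamma^g}(G)$. Given a loop $\alpha\co[0,1]\to\fX_{\Gamma^g}(G)$ at $[\rho]$, path-lifting gives a path $\tilde\alpha$ in $\Hom(\Gamma^g,G)$ with $\tilde\alpha(0)=\rho$ and $\pi\circ\tilde\alpha=\alpha$. Its endpoint $\tilde\alpha(1)$ lies in the fiber $\pi^{-1}([\rho])$. Invoking \cite[Lemma 4.11]{FlLaAbelian}, this fiber is path-connected: every point of it specializes (via a one-parameter subgroup, by Kempf--Ness / Hilbert--Mumford) to a point of the unique closed orbit $G\cdot\rho$ it contains, and that orbit is itself path-connected because $G$ is connected. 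Hence I can choose a path $\beta$ from $\tilde\alpha(1)$ back to $\rho$ entirely inside $\pi^{-1}([\rho])$.

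The concatenation $\tilde\alpha\ast\beta$ is then a loop in $\Hom(\Gamma^g,G)$ based at $\rho$; by Li's simple-connectedness result it is null-homotopic. Applying $\pi$, and noting that $\pi\circ\beta$ is the constant loop at $[\rho]$, one obtains a null-homotopy of $\alpha$ itself, so $\pi_1(\fX_{\Gamma^g}(G))=1$. The point requiring the most care is the path-connectedness of the GIT-fiber over a polystable point — it is not immediate because this fiber can be strictly larger than the closed orbit and non-polystable representations must be joined to $\rho$ — and this is precisely where \cite[Lemma 4.11]{FlLaAbelian} is invoked.
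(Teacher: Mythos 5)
Your argument is exactly the one the paper uses: simple connectedness of $\Hom(\Gamma^g,G)$ from Li's theorem, the path-lifting property of the GIT quotient map from Lemma~\ref{pathlifting}, and path-connectedness of the fiber over $[\rho]$ from \cite[Lemma 4.11]{FlLaAbelian}, combined to show $\pi$ is $\pi_1$-surjective from a simply connected space. You have simply written out the loop-lifting and endpoint-closing step more explicitly than the paper's one-sentence remark at the end of Section~\ref{lifting-sec}.
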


In \cite{Ho-Liu-ctd-comp-II, A-B}, it is shown that 
\begin{equation} \label{HL}  \pi_0(\Hom(\Gamma^g,G))\cong \pi_0(\fX_{\Gamma^g}(G))\cong \pi_1(DG)\end{equation}
for any $g>0$ and any compact connected Lie group.  In Appendix~\ref{reductivecomponents}, Ho and Liu generalize this same result to the case when $G$ is a connected reductive $\C$-group.

\begin{corollary}\label{univ-cover-surface-reductive}
Let $G$ be a connected reductive $\C$-group, and assume $\pi_1(G)$ is torsion-free.  Then $\fX_{\Gamma^g}(\wt{G})\srm{q_*}\fX_{\Gamma^g}(G)$ is the universal cover.  In particular, $\pi_1(\fX_{\Gamma^g}(G))\isom \pi_1(G)^{2g}$. 
\end{corollary}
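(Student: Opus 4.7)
The plan is to apply Proposition~\ref{Hom} with $\Gamma = \Gamma^g$. Since $\Gamma^g$ is exponent-canceling of rank $2g$ (the standard surface presentation has a single commutator relation), once we verify that $\fX_{\Gamma^g}(DG)$ is simply connected, the proposition immediately delivers both conclusions: the map $\fX_{\Gamma^g}(\wt{G})\srm{q_*}\fX_{\Gamma^g}(G)$ is the universal covering, and $\pi_1(\fX_{\Gamma^g}(G))\isom \pi_1(G)^{2g}$. So the entire argument reduces to establishing simple connectivity of $\fX_{\Gamma^g}(DG)$.

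I would split into two cases. The case $g=1$ is the torus case $\Gamma^1 = \bbZ^2$; here the rank is $r=2$ and $\pi_1(G)$ is torsion-free by hypothesis, so Corollary~\ref{free-ab-univ-cover}(b) applies directly and gives the conclusion with no further work.

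For $g>1$, I would leverage Theorem~\ref{wowhowdopeoplenotknowthis}. The key point is that the torsion-free hypothesis on $\pi_1(G)$ upgrades $DG$ to a connected, simply connected semisimple $\C$-group: by Proposition~\ref{univ-cover}, $\pi_1(G)$ torsion-free forces $DG$ to be simply connected, and as the derived subgroup of a connected reductive $\C$-group, $DG$ is automatically a connected semisimple $\C$-group. Thus Theorem~\ref{wowhowdopeoplenotknowthis} applies with $G$ there replaced by $DG$ and yields the required simple connectivity of $\fX_{\Gamma^g}(DG)$.

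The main obstacles have already been overcome elsewhere in the paper: the component-count $\pi_0(\fX_{\Gamma^g}(G))\cong \pi_1(DG)$ from Ho--Liu's appendix and Jun Li's work, Li's simple-connectivity result for $\Hom(\Gamma^g,G)$ when $G$ is simply connected semisimple, and the path-lifting Lemma~\ref{pathlifting} that promotes this to simple connectivity of the quotient $\fX_{\Gamma^g}(DG)$. Given these, the corollary itself is just an assembly argument: identify the exponent-canceling rank, verify the hypothesis of Proposition~\ref{Hom} via Theorem~\ref{wowhowdopeoplenotknowthis}, and invoke the proposition.
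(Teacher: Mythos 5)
Your proposal is correct and follows exactly the paper's own argument: reduce to simple connectivity of $\fX_{\Gamma^g}(DG)$ via Proposition~\ref{Hom}, handle $g=1$ by the free Abelian case, and for $g\geq 2$ note that $\pi_1(G)$ torsion-free forces $DG$ simply connected (Proposition~\ref{univ-cover}) so that Theorem~\ref{wowhowdopeoplenotknowthis} applies. No gaps.
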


\begin{proof}
The case $g=1$ was handled in Section~\ref{Abelian-sec}, so we assume $g\geq 2$.  As before, it suffices to check that $\fX_{\Gamma^g}(DG)$ is simply connected.  By Proposition~\ref{univ-cover}, we know that $\pi_1 (DG) = 1$, so   $\fX_{\Gamma^g}(DG)$ is  simply connected by Theorem~\ref{wowhowdopeoplenotknowthis}.  
\end{proof}

\begin{corollary}$\label{univ-cover-surface}$  Let $G=\Un(n)$.  The covering map 
$\fX_{\Gamma^g} (\wt{G}) \srm{q_*} \fX_{\Gamma^g}(G)$ is the universal covering map.  Consequently,
$\fX_{\Gamma^g} (\SUn(n))$ is simply connected, and $\pi_1\left(\fX_{\Gamma^g} (\Un(n))\right)\cong \Z^{2g}.$
\end{corollary}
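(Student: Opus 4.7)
The plan is to apply Proposition~\ref{Hom} with $G=\Un(n)$ and $\Gamma=\Gamma^g$. The hypotheses to check are that $\Gamma^g$ is exponent-canceling (which it is, of rank $2g$, via the standard presentation whose single relation $\prod_i[a_i,b_i]$ Abelianizes to $0$), that $\Un(n)$ is a connected compact Lie group with $\pi_1(\Un(n))=\Z$ torsion-free (both immediate), and that $\fX_{\Gamma^g}(D\Un(n))=\fX_{\Gamma^g}(\SUn(n))$ is simply connected. All but the last are automatic, so establishing the last is the crux.

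For $g=1$, we have $\Gamma^1=\Z^2$ and $\SUn(n)$ is of type $A_{n-1}$ and simply connected, hence orthogonal-free; thus Corollary~\ref{free-ab-univ-cover} (case $r=2$, using that $\pi_1(\SUn(n))=1$ is torsion-free) already yields $\pi_1(\fX_{\Z^2}(\SUn(n)))\isom\pi_1(\SUn(n))^2=1$, together with path-connectedness inherited from that of $\Hom(\Z^2,\SUn(n))$. For $g\geqs 2$, the approach is to import simple-connectivity from the complex case: by Theorem~\ref{wowhowdopeoplenotknowthis}, $\fX_{\Gamma^g}(\SLn(n,\C))$ is simply connected, since $\SLn(n,\C)$ is a simply connected semisimple $\C$-group. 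Combining this with a Kempf--Ness/polar-decomposition type deformation retraction $\fX_{\Gamma^g}(\SLn(n,\C))\to\fX_{\Gamma^g}(\SUn(n))$ onto the compact character variety (of Florentino--Lawton--Oliveira type, using that $\SUn(n)$ is the maximal compact subgroup of $\SLn(n,\C)$) transfers simple-connectivity to $\fX_{\Gamma^g}(\SUn(n))$.

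With $\fX_{\Gamma^g}(\SUn(n))$ known to be simply connected, Proposition~\ref{Hom} delivers both conclusions at once: the map $q_*\co\fX_{\Gamma^g}(\wt{\Un(n)})\to\fX_{\Gamma^g}(\Un(n))$ is the universal covering map, and $\pi_1(\fX_{\Gamma^g}(\Un(n)))\isom\pi_1(\Un(n))^{2g}=\Z^{2g}$. The stated consequence that $\fX_{\Gamma^g}(\SUn(n))$ is simply connected is precisely the simple-connectivity of the total space, since by Proposition~\ref{univ-cover} we have $\wt{\Un(n)}\isom\R\cross\SUn(n)$ and hence $\fX_{\Gamma^g}(\wt{\Un(n)})\heq\R^{2g}\cross\fX_{\Gamma^g}(\SUn(n))$ by Lemma~\ref{heqs}.

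The main obstacle is justifying the compact/complex comparison in the $g\geqs 2$ case: the Florentino--Lawton deformation retraction results cited in the excerpt (\cite{FlLaFree}, \cite{FlLaAbelian}) are only for free and free Abelian $\Gamma$, so a surface-group analogue is needed. One may either appeal to a polar-decomposition argument adapted to surface group relations, or bypass the comparison entirely by invoking classical gauge-theoretic inputs (Atiyah--Bott, Daskalopoulos--Uhlenbeck, Jeffrey--Weitsman) that directly ensure simple-connectivity of the moduli space of flat $\SUn(n)$-connections on $M^g$.
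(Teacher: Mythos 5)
Your reduction to the simple connectivity of $\fX_{\Gamma^g}(\SUn(n))$ is the right skeleton, and the $g=1$ case via Corollary~\ref{free-ab-univ-cover} is fine, but the argument for $g\geqs 2$ has a genuine gap at exactly the step you flag as ``the main obstacle.'' The deformation retraction $\fX_{\Gamma^g}(\SLn(n,\C))\to\fX_{\Gamma^g}(\SUn(n))$ you want does not exist: the Kempf--Ness/polar-decomposition retractions of Florentino--Lawton(--Oliveira) are special to free and free Abelian groups, and for surface groups of genus $g\geqs 2$ the complex and compact character varieties are \emph{never} homotopy equivalent --- the paper itself remarks on this, citing Biswas--Florentino, and it is already visible in the Poincar\'e polynomial computations of Hitchin and Atiyah--Bott. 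So simple connectivity cannot be imported from Theorem~\ref{wowhowdopeoplenotknowthis} by this mechanism, and no polar-decomposition argument adapted to surface-group relations can produce such a retraction. Your fallback of ``invoking classical gauge-theoretic inputs'' is not a proof as stated: the space in question is the full singular quotient $\Hom(\Gamma^g,\SUn(n))/\SUn(n)$ (degree zero, reducibles included), and its simple connectivity is not an off-the-shelf consequence of Atiyah--Bott or Daskalopoulos--Uhlenbeck.

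The paper's actual route works at the level of representation spaces rather than character varieties. Since $\Hom(\Gamma^g,\bbR\cross\SUn(n))$ is path-connected (by the component count of Atiyah--Bott/Ho--Liu together with Lemma~\ref{heqs}), Goldman's lemma makes $q_*\co\Hom(\Gamma^g,\bbR\cross\SUn(n))\to\Hom(\Gamma^g,\Un(n))$ a surjective covering with structure group $\Hom(\Gamma^g,\bbZ)\isom\bbZ^{2g}$, giving a short exact sequence
$$1\maps\pi_1\bigl(\Hom(\Gamma^g,\bbR\cross\SUn(n))\bigr)\maps\pi_1\bigl(\Hom(\Gamma^g,\Un(n))\bigr)\maps\bbZ^{2g}\maps 1.$$
The key external input is the Ho--Liu--Ramras result that the determinant map induces an isomorphism $\pi_1(\Hom(\Gamma^g,\Un(n)))\isom\bbZ^{2g}$; since any surjection $\bbZ^{2g}\to\bbZ^{2g}$ is an isomorphism, the kernel term vanishes and $\Hom(\Gamma^g,\SUn(n))$ is simply connected by Lemma~\ref{heqs}. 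Simple connectivity then descends to $\fX_{\Gamma^g}(\SUn(n))$ by the path-lifting property of compact group quotients (the fibers being connected orbits), and Proposition~\ref{Hom} finishes as you say. To repair your proof you would need to replace the complex-to-compact comparison with this determinant-map input, or with some other direct computation of $\pi_1\Hom(\Gamma^g,\SUn(n))$.
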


\begin{proof}  We have $\wt{G} \isom \bbR\cross \SUn(n)$.  
Since $DG = \SUn(n)$ is connected, it follows from \cite{Ho-Liu-ctd-comp-II} (see  (\ref{HL})) that $\Hom(\Gamma^g, \SUn(n))$ and $\Hom(\Gamma^g,\Un(n))$, and consequently $\fX_{\Gamma^g} (\SUn(n))$ and $\fX_{\Gamma^g} (\Un(n))$, are path-connected.  
By  Lemma~\ref{heqs}, we conclude that $\fX_{\Gamma^g} (\bbR\cross \SUn(n))$ is path-connected as well.

Lemma~\ref{Goldman} tells us that $q_*\co \Hom(\Gamma^g, \bbR\cross \SUn(n)) \to \Hom (\Gamma^g, \Un(n))$ is a covering map whose image is a union of path components; since the base space is in fact path-connected, we find that $q_*$ is surjective.  Again, by Lemma~\ref{Goldman}, the structure group of this covering is $\Hom(\Gamma, \ker(\pi)) = \Hom(\Gamma^g, \bbZ) \isom \bbZ^{2g}$.   
This gives a short exact sequence of groups
$$1\maps \pi_1 (\Hom(\Gamma^g, \bbR\cross \SUn(n))) \maps \pi_1 (\Hom (\Gamma^g, \Un(n))) \srm{p} \bbZ^{2g}\maps 1.$$
In \cite[Proposition 5.1]{Ho-Liu-Ramras} it was shown that the determinant map 
$$\det \co \Hom(\Gamma^g, \Un(n)) \maps \Hom(\Gamma^g, U(1)) \isom (S^1)^{2g}$$ 
induces an isomorphism on $\pi_1$, so $ \pi_1 (\Hom (\Gamma^g, \Un(n))) \isom \bbZ^{2g}$.
Every surjection between free Abelian groups of the same rank is an isomorphism, so the map $p$ in the above exact sequence is an isomorphism and $\ker (p) = \pi_1 (\Hom(\Gamma^g, \bbR\cross \SUn(n)))$ is trivial.
By Lemma~\ref{heqs}, it follows that $ \Hom(\Gamma^g, \SUn(n))$ is simply connected.  Using path-lifting as in Section~\ref{lifting-sec} (or \cite[Corollary II.6.3]{Bredon-transf}) we conclude that $\fX_{\Gamma^g} (\SUn (n))$ is simply connected as well. 
Proposition~\ref{Hom} completes the proof.
\end{proof}

\begin{remark}
Biswas and Lawton also obtain Corollary~\ref{univ-cover-surface} and the $\GL(n,\C)$ case of Corollary~\ref{univ-cover-surface-reductive} in \cite{BiLa}.
\end{remark}

It is natural to ask whether Corollary~\ref{univ-cover-surface} extends to other connected compact Lie groups.  We offer a partial result in this direction.

\begin{corollary}  Let $G$ be a connected compact  Lie group with
$\pi_1 (G)$   torsion-free.  Then $\fX_{\Gamma^g} (\wt{G})$ is path connected, and there is a  short exact sequence of groups
$$1\maps \pi_1 (\fX_{\Gamma^g} (\wt{G})) \maps \pi_1 ( \fX_{\Gamma^g} (G)) \maps \pi_1 (G)^{2g}\maps 1.$$
\end{corollary}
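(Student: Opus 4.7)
The plan is to apply Theorem~\ref{prop-discont} together with Proposition~\ref{surj} to the universal covering homomorphism $q\co \wt{G}\to G$, and then invoke the standard short exact sequence of homotopy groups associated to a normal covering with path-connected total space.

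First I would establish path-connectedness of $\fX_{\Gamma^g}(\wt{G})$. Since $\pi_1(G)$ is torsion-free, Proposition~\ref{univ-cover} gives $\wt{G}\isom \bbR^k\cross DG$ with $DG$ a compact, simply connected, semisimple Lie group. Lemma~\ref{heqs} then provides a homotopy equivalence $\fX_{\Gamma^g}(\wt{G})\heq \fX_{\Gamma^g}(DG)$. Because $DG$ is semisimple we have $D(DG)=DG$, which is simply connected, so the Ho--Liu bijection~(\ref{HL}) yields $\pi_0(\fX_{\Gamma^g}(DG))\isom \pi_1(DG)=1$; hence $\fX_{\Gamma^g}(\wt{G})$ is path-connected.

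Next, since $\Gamma^g$ is exponent-canceling of rank $2g$, Proposition~\ref{surj} shows that $q_*\co \fX_{\Gamma^g}(\wt{G})\to \fX_{\Gamma^g}(G)$ is surjective with structure group $\Hom(\Gamma^g,\ker q)\isom \pi_1(G)^{2g}$, and Theorem~\ref{prop-discont} tells us that this is a normal covering map (the torsion-free hypothesis on $\pi_1(G)$ is precisely what is needed there). Finally, since the moduli spaces are semi-algebraic, they are triangulable and hence locally path-connected, so standard covering space theory applies: a normal covering with path-connected, locally path-connected total space and deck group $\pi_1(G)^{2g}$ produces the exact sequence
$$1\maps \pi_1(\fX_{\Gamma^g}(\wt{G}))\maps \pi_1(\fX_{\Gamma^g}(G))\maps \pi_1(G)^{2g}\maps 1,$$
for any choice of compatible basepoints.

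The only nontrivial step is the verification of path-connectedness of the total space, which must be routed through the homotopy equivalence of Lemma~\ref{heqs} to reduce to the semisimple simply connected case, where the Ho--Liu bijection closes the argument. All other steps are direct consequences of the machinery already in place (Theorem~\ref{prop-discont}, Proposition~\ref{surj}, Proposition~\ref{univ-cover}, and Lemma~\ref{heqs}); in particular, no new representation-variety input is required beyond the $\pi_0$-computation, which is why the corollary stops at a short exact sequence rather than an identification of $\pi_1$ of the total space.
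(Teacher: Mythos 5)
Your proposal is correct and follows essentially the same route as the paper: reduce path-connectedness of $\fX_{\Gamma^g}(\wt{G})$ to that of $\fX_{\Gamma^g}(DG)$ via Proposition~\ref{univ-cover} and Lemma~\ref{heqs}, settle it with the Ho--Liu bijection~(\ref{HL}) since $DG$ is simply connected, and then read off the exact sequence from the normal covering of Theorem~\ref{prop-discont} (with surjectivity from Proposition~\ref{surj}). The only difference is that you make the appeal to Proposition~\ref{surj} explicit, which the paper leaves implicit.
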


\begin{proof} 
By Theorem~\ref{prop-discont}, we have the covering map $\fX_{\Gamma^g} (\wt{G}) \to \fX_{\Gamma^g} (G)$ with structure group $\pi_1 (G)^{2g}$, so it suffices to check that $\fX_{\Gamma^g} (\wt{G})$ is path-connected.  By Proposition~\ref{Hom}, we just need to check that $\fX_{\Gamma^g} (DG)$ is path-connected, but this follows from (\ref{HL}) since $DG$ is simply connected. 
\end{proof}

\begin{remark}
Let $G$ be a connected reductive $\C$-group, and $K$ a maximal compact subgroup.  In \cite{FlLaAbelian} and \cite{FlLaFree} it is shown that $\fX_{\Gamma}(G)$ and $\fX_{\Gamma}(K)$ are homotopic when $\Gamma$ is a finitely generated free group or a finitely generated Abelian group.  However, examples when $\fX_{\Gamma^g}(G)$ and $\fX_{\Gamma^g}(K)$ are not homotopic can be ascertained from Poincar\'e polynomial computations {\rm (}see \cite{Hitchin}, \cite{A-B}, \cite{DaWeWi}$);$ and in \cite{BiFl} it is shown they are never homotopic for $g\geq 2$.  This paper, however,  provides evidence that they are $\pi_k$-isomorphic for $k=0,1$.
\end{remark}

We end this section with some interesting questions.

\begin{question}
To what extent can we remove the assumption that $\pi_1(G)$ is torsion-free?  Note that because of Example~\ref{SU(2)}, a direct generalization of Proposition~\ref{prop-discont} is not possible.
\end{question}

\begin{question}\label{q2}
Can we generalize Corollary~\ref{univ-cover-surface} to all connected compact Lie groups whose derived subgroup is simply connected?
\end{question}

We note that Question~\ref{q2} can in fact be addressed using the methods in~\cite{BiLaRa}.

\begin{question} As discussed in the introduction, Theorem~\ref{univ-cover-surface-reductive} {\rm (}for $G = \GLn (n)${\rm )} extends~\cite[Theorem 4.4(1)]{BGG} to the degree zero component of $\fX_{\widehat{\Gamma^g}} (\GLn (n))$.
Can our methods  be used to compute fundamental groups for all components of this space?  Note that $\widehat{\Gamma^g}$ is exponent-canceling.  
\end{question}

\begin{question}What can be said when $\Gamma$ is a non-orientable surface group?  Note that since exponent-canceling groups have torsion-free Abelianizations, non-orientable surface groups are not exponent-canceling. 
\end{question}

\section{Stable moduli spaces of surface group representations}$\label{stable-sec}$

We now use Corollaries~\ref{univ-cover-surface-reductive} and~\ref{univ-cover-surface} to study the \e{stable moduli spaces} of surface group representations
$$\fX_{\Gamma^g} (\SUn) := \colim_{n\to \infty} \fX_{\Gamma^g} (\SUn(n)) \textrm{ and } \fX_{\Gamma^g} (\GLn) := \colim_{n\to \infty} \fX_{\Gamma^g} (\GLn(n)).$$
The colimits are taken with respect to the inclusions sending $A$ to $A \oplus [1]$ (the block sum of $A$ with the $1\cross 1$ identity matrix).

The unitary stable moduli space $\fX_{\Gamma^g} (U)$ is defined similarly, and
its homotopy type was calculated by the second author using results from Yang--Mills theory and stable homotopy theory~\cite{Ramras-stable-moduli}.

\begin{theorem}[Ramras] \label{U(n)-thm} There is a homotopy equivalence
$$\fX_{\Gamma^g} (\Un) \heq (S^1)^{2g} \cross \bbC \textrm{P}^\infty.$$
Hence $\pi_1 (\fX_{\Gamma^g} (\Un) ) \isom \bbZ^{2g}$, $\pi_2 (\fX_{\Gamma^g} (\Un) )\isom \bbZ$, and $\pi_*(\fX_{\Gamma^g} (\Un) )  = 0$ for $*\neq 1, 2$.
\end{theorem}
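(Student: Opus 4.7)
The plan is to peel off an $(S^1)^{2g}$ torus factor using the covering-space machinery of this paper, stabilize in $n$, and identify the residual stable $\SUn$-moduli space with $\bbC\textrm{P}^\infty$ via Yang--Mills theory. First, Corollary~\ref{univ-cover-surface} together with Lemma~\ref{heqs} identifies the universal cover of $\fX_{\Gamma^g}(\Un(n))$ homeomorphically with $\bbR^{2g}\cross\fX_{\Gamma^g}(\SUn(n))$. The deck group $K=\Hom(\Gamma^g,\bbZ)\cong\bbZ^{2g}$ acts by translation on the $\bbR^{2g}$-factor and, through the canonical map $\bbZ\to\mu_n=Z(\SUn(n))$, by central twisting on the $\SUn(n)$-factor, exhibiting $\fX_{\Gamma^g}(\Un(n))$ as a (possibly twisted) $\fX_{\Gamma^g}(\SUn(n))$-bundle over $(S^1)^{2g}$.

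Second, I would pass to the colimit in $n$. The central subgroups $\mu_n\subset\SUn(n)$ have trivial image in $\SUn$: under the block inclusion $A\mapsto A\oplus[1]$, the element $\lambda I_n$ is sent to $\mathrm{diag}(\lambda,\ldots,\lambda,1)$, which is non-central in $\SUn(n+1)$ unless $\lambda=1$. Hence the central-twisting monodromy becomes null-homotopic in the stable limit, yielding
$$\fX_{\Gamma^g}(\Un)\heq(S^1)^{2g}\cross\fX_{\Gamma^g}(\SUn).$$

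The main obstacle is the final step, $\fX_{\Gamma^g}(\SUn)\heq\bbC\textrm{P}^\infty$. Corollary~\ref{univ-cover-surface} already yields simple connectivity at each finite $n$, and~(\ref{HL}) yields path-connectedness. To compute $\pi_k$ for $k\geq 2$ in the stable range I would invoke the Atiyah--Bott picture: each $\fX_{\Gamma^g}(\SUn(n))$ is homotopy equivalent to the Yang--Mills moduli of flat $\SUn(n)$-connections on $M^g$, and Morse theory for the Yang--Mills functional on the full space of connections shows that in the stable range --- where unstable codimensions tend to infinity with $n$ --- this moduli is equivalent to the based mapping space $\bMap(M^g,B\SUn(n))$. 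Taking the colimit and computing via the standard $2$-dimensional CW structure on $M^g$ (one $0$-cell, $2g$ $1$-cells, and one $2$-cell) produces $\bbC\textrm{P}^\infty$. Making the Morse-theoretic stabilization rigorous --- controlling unstable codimensions uniformly in $n$ and verifying that the character-variety colimit agrees with the gauge-theoretic colimit --- is the technical heart of the argument; a first-Chern-class construction on a tautological family over $\fX_{\Gamma^g}(\SUn(n))$ then detects the $\pi_2$-generator concretely, pinning down the orientation of the $\bbC\textrm{P}^\infty$ factor.
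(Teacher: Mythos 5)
This theorem is not proved in the present paper at all: it is imported verbatim from \cite{Ramras-stable-moduli}, so the comparison is with the argument there (and with the closely analogous proof of Theorem~\ref{stable-moduli-thm} in Section~\ref{stable-sec}). Your sketch contains two genuine gaps, and the second, as written, computes the wrong space. First, the splitting $\fX_{\Gamma^g}(\Un)\heq (S^1)^{2g}\cross \fX_{\Gamma^g}(\SUn)$ does not follow from your monodromy discussion. The observation that $\mu_n$ is not carried into $Z(\SUn(n+1))$ by the block inclusion only shows that the twisted-bundle description fails to be compatible with stabilization; it is not an argument that the twisting dies stably. More importantly, even with trivial monodromy a fibration over $(S^1)^{2g}$ with fiber $K(\bbZ,2)$ is classified by a $k$-invariant in $H^3((S^1)^{2g};\bbZ)\cong \bbZ^{\binom{2g}{3}}$, which is nonzero for $g\geq 2$; so triviality requires a genuine splitting argument. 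In \cite{Ramras-stable-moduli} the product decomposition falls out of a spectrum-level splitting (see below) rather than from any bundle-theoretic consideration.

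Second, and more seriously: Yang--Mills Morse theory in the stable range identifies the \emph{homotopy} quotient $\Hom(\Gamma^g,\Un(n))_{h\Un(n)}$ (the Borel construction on the space of flat connections) with a mapping space, not the honest quotient $\fX_{\Gamma^g}(\SUn(n))$; this is exactly the distinction between the spectra $\K(\Gamma)$ and $\Rdef(\Gamma)$ appearing in the proof of Theorem~\ref{stable-moduli-thm}. Moreover $\bMap(M^g,B\SUn)$ is not $\bbC\textrm{P}^\infty$: the cofibration $\bigvee^{2g}S^1\to M^g\to S^2$ gives a fibration $\Omega^2B\SUn\heq BU\to \bMap(M^g,B\SUn)\to \SUn^{2g}$, so this mapping space has nontrivial homotopy in infinitely many degrees (its homotopy groups are essentially the $2$-periodic $K$-groups of $M^g$), e.g.\ $\pi_3\neq 0$, whereas $\fX_{\Gamma^g}(\SUn)$ has $\pi_3=0$. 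The ingredient your sketch is missing is precisely the passage from the homotopy quotient to the genuine quotient, namely Lawson's Bott cofiber sequence $\Sigma^2\K(\Gamma)\srm{\beta}\K(\Gamma)\to\Rdef(\Gamma)$ from \cite{Lawson-simul}. Combined with the Yang--Mills/Atiyah--Segal computation $\K(\Gamma^g)\heq \ku\vee(\Sigma \ku)^{\vee 2g}\vee\Sigma^2\ku$, cofibering off the Bott map replaces $\ku$ by $H\bbZ$ and yields $\Rdef(\Gamma^g)\heq H\bbZ\vee(\Sigma H\bbZ)^{\vee 2g}\vee\Sigma^2 H\bbZ$, whose zeroth space is $\bbZ\cross(S^1)^{2g}\cross\bbC\textrm{P}^\infty$. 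This one step simultaneously kills the higher $K$-theoretic homotopy that your mapping-space computation would leave behind and produces, for free, the product decomposition you were trying to establish by hand in your first step.
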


In fact, the same article also shows that the maps
$$(S^1)^{2g} \isom \fX_{\Gamma^g} (\Un(1)) \maps \fX_{\Gamma^g} (\Un) \maps \fX_{\Gamma^g} (\Un(1)) \isom (S^1)^{2g}  $$
induced by the inclusions $\Un(1)\to \Un(n)$ and the determinant maps $\Un(n) \to \Un(1)$ (respectively) each induce an isomorphism on fundamental groups.  We can now extend this result.

\begin{proposition} The functor $\pi_1 \left( \fX_{\Gamma^g} (-)\right)$ applied to the diagram
$$\xymatrix{ \Un(1) \ar@{^{(}->}[r] \ar@{^{(}->}[d] &\Un (n) \ar@{^{(}->}[r] \ar@{^{(}->}[d] &\Un (n+1)  \ar@{^{(}->}[d] \ar[r]^{\textrm{det}} &\Un(1)  \ar@{^{(}->}[d] \\
 \GLn(1) \ar@{^{(}->}[r]  &\GLn (n) \ar@{^{(}->}[r]  & \GLn (n+1)   \ar[r]^{\textrm{det}} &\GLn(1)}
$$
results in a diagram of isomorphisms between free Abelian groups of rank $2g$.  Consequently, $\pi_1 \left(\fX_{\Gamma^g} (\GLn)\right) \isom \bbZ^{2g}$.
\end{proposition}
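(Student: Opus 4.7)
The plan is to identify every entry of the diagram, after applying the functor $\pi_1(\fX_{\Gamma^g}(-))$, as a copy of $\bbZ^{2g}$, and then use the fact that for every $m\geqs 1$ the composition $\Un(1)\injects\Un(m)\srm{\det}\Un(1)$ (and its $\GLn$ analogue) equals the identity to force every arrow in the diagram to be an isomorphism. The statement for the stable moduli space then follows by passing to the colimit.

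First I would identify the fundamental groups. For $n\geqs 1$, Corollaries~\ref{univ-cover-surface} and~\ref{univ-cover-surface-reductive} give $\pi_1(\fX_{\Gamma^g}(\Un(n)))\isom \pi_1(\fX_{\Gamma^g}(\GLn(n)))\isom \bbZ^{2g}$ (here $\pi_1(\GLn(n))\isom\bbZ$ is torsion-free, so Corollary~\ref{univ-cover-surface-reductive} applies). For $n=1$, since $\Un(1)$ and $\GLn(1)$ are Abelian we have $\fX_{\Gamma^g}(\Un(1))\isom(S^1)^{2g}$ and $\fX_{\Gamma^g}(\GLn(1))\isom(\bbC^*)^{2g}$, both with $\pi_1\isom\bbZ^{2g}$, and both outer vertical inclusions realize the standard homotopy equivalence $(S^1)^{2g}\hookrightarrow(\bbC^*)^{2g}$, hence are $\pi_1$-isomorphisms.

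Next, from $\det(A\oplus I_{m-1})=A$ the composition $\Un(1)\injects\Un(m)\srm{\det}\Un(1)$ is the identity, so after applying $\pi_1(\fX_{\Gamma^g}(-))$ the determinant factor is a surjection of $\bbZ^{2g}$ onto itself and therefore an isomorphism, and the inclusion factor is its inverse. Taking $m=n$ and $m=n+1$ in both rows identifies the determinants and the inclusions $\Un(1)\injects\Un(n)$ and $\Un(1)\injects\Un(n+1)$ (and their $\GLn$ counterparts) as isomorphisms on $\pi_1(\fX_{\Gamma^g}(-))$; the displayed middle arrow $\Un(n)\injects\Un(n+1)$ is then an isomorphism by cancellation from the equality of $\Un(1)\to\Un(n+1)$ with $\Un(1)\to\Un(n)\to\Un(n+1)$, and similarly on the $\GLn$ row. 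For the vertical arrows I would chase the right-hand square: the composition
$$\pi_1(\fX_{\Gamma^g}(\Un(n+1)))\srm{\det_*}\pi_1(\fX_{\Gamma^g}(\Un(1)))\to\pi_1(\fX_{\Gamma^g}(\GLn(1)))$$
is a product of two isomorphisms by the above, and by commutativity equals
$$\pi_1(\fX_{\Gamma^g}(\Un(n+1)))\to\pi_1(\fX_{\Gamma^g}(\GLn(n+1)))\srm{\det_*}\pi_1(\fX_{\Gamma^g}(\GLn(1))),$$
whose second factor is an isomorphism; hence the vertical map is one too. The analogous argument applied to the remaining commutative squares handles the other vertical maps.

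Finally, $\fX_{\Gamma^g}(\GLn)=\colim_n\fX_{\Gamma^g}(\GLn(n))$ is a sequential colimit of closed inclusions of triangulable spaces, so $\pi_1$ commutes with the colimit, and since every transition map is an isomorphism of $\bbZ^{2g}$ we obtain $\pi_1(\fX_{\Gamma^g}(\GLn))\isom\bbZ^{2g}$. The main technical subtlety is confirming that $\pi_1$ commutes with this particular telescope, which should follow from the semi-algebraic/triangulable structure of the finite-level moduli spaces recorded in Section~\ref{ModuliSection}.
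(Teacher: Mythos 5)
Your proposal is correct and follows essentially the same route as the paper: identify every $\pi_1(\fX_{\Gamma^g}(-))$ as $\bbZ^{2g}$ via Corollaries~\ref{univ-cover-surface} and~\ref{univ-cover-surface-reductive}, use that $\det$ splits the inclusions of $\Un(1)$ and $\GLn(1)$ together with the fact that every surjective endomorphism of $\bbZ^{2g}$ is an isomorphism, and then pass to the colimit using that the stabilization maps are embeddings (triangulable, with images as subcomplexes). Your write-up merely makes explicit the diagram chases for the middle horizontal and vertical arrows that the paper leaves implicit.
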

\begin{proof} By Corollaries~\ref{univ-cover-surface-reductive} and~\ref{univ-cover-surface}, the fundamental groups of all of these character varieties are free Abelian of rank $2g$, and the determinant maps split the inclusions $\GLn(1) \injects \GLn(m)$ and $\Un(1) \injects \Un(m)$. 
Since every surjection $\bbZ^{2g}\to \bbZ^{2g}$ is an isomorphism, the result follows.  For the final statement, note that the maps 
$\fX_{\Gamma^g} (\GLn (n))\to \fX_{\Gamma^g} (\GLn (n+1))$ are embeddings of Hausdorff spaces, 
so $\pi_1 ( \fX_{\Gamma^g} (\GLn) ) \isom \colim_n \pi_1 ( \fX_{\Gamma^g} (\GLn (n)) )$.
\end{proof}

Using Corollary~\ref{univ-cover-surface}, we can also describe a generator of $\pi_2 (\fX_{\Gamma^g} (U)) \isom \bbZ$.  We will need to use the degree one ``pinch" maps
$p_g \co M^g \to M^1 = S^1\cross S^1$.  
These maps are characterized up to homotopy by their behavior on $\pi_1$, and in terms of the standard presentation $\Gamma^g = \langle a_1, b_1 \ldots, a_g, b_g \,|\, [a_1, b_1] \cdots [a_g, b_g] = e\rangle$, 
we have 
$(p_g)_* (a_1) = a_1$, $(p_g)_* (b_1) = b_1$, and $(p_g)_* (a_i) = (p_g)_* (b_i) = e$ for $i>1$ (on the right-hand side of these equations, $a_1$ and $b_1$ are the generators of $\Gamma^1 \isom \bbZ^2$).

\begin{theorem}$\label{stable-moduli-thm}$
There is a homotopy equivalence
$$\fX_{\Gamma^g} (\SUn) \heq \bbC \textrm{P}^\infty.$$
Moreover, the natural maps
$$\fX_{\bbZ^2} (\SUn(2)) \maps \fX_{\bbZ^2} (\SUn) \xmaps{p_g^*}  \fX_{\Gamma^g} (\SUn) \maps \fX_{\Gamma^g} (\Un)$$
each induce an isomorphism on $\pi_2$.
{\rm (}The first  and last maps are induced by the inclusions $\SUn(2)\injects \SUn(n)$ and $\SUn(n)\injects \Un(n)$, respectively.{\rm )}
\end{theorem}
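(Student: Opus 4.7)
The plan is to split the theorem into two parts: first establish the homotopy equivalence $\fX_{\Gamma^g}(\SUn) \heq \bbC\textrm{P}^\infty$, and then verify the three $\pi_2$-isomorphism assertions.

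\emph{Homotopy equivalence.} By Corollary~\ref{univ-cover-surface} (together with Lemma~\ref{heqs}), for each $n$ the natural map $\fX_{\Gamma^g}(\SUn(n))\to\fX_{\Gamma^g}(\Un(n))$ represents, up to homotopy, the universal cover of its target; in particular each $\fX_{\Gamma^g}(\SUn(n))$ is simply connected and the map induces isomorphisms on $\pi_k$ for $k\geqs 2$. Passing to the colimit over $n$, compactness of loops and spheres in CW-complexes shows that $\fX_{\Gamma^g}(\SUn)$ is simply connected and that $\fX_{\Gamma^g}(\SUn)\to\fX_{\Gamma^g}(\Un)$ remains a $\pi_k$-isomorphism for $k\geqs 2$. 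Combining this with Theorem~\ref{U(n)-thm} ($\pi_2\fX_{\Gamma^g}(\Un)\isom\bbZ$, higher homotopy groups vanishing), the space $\fX_{\Gamma^g}(\SUn)$ has the homotopy groups of a $K(\bbZ,2)$, so Whitehead's theorem (on the CW-homotopy type coming from semi-algebraicity) yields $\fX_{\Gamma^g}(\SUn)\heq \bbC\textrm{P}^\infty$.

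\emph{The $\pi_2$-isomorphisms.} For each of the three maps in the chain, I would form the natural commutative square comparing the $\SUn$-version to its $\Un$-analog, whose vertical arrows are the inclusions $\fX_{-}(\SUn)\hookrightarrow\fX_{-}(\Un)$ induced by $\SUn(n)\hookrightarrow\Un(n)$. By the first step these verticals are themselves universal-covering inclusions, hence $\pi_k$-isomorphisms for $k\geqs 2$, so the top ($\SUn$) arrow is a $\pi_2$-isomorphism iff its bottom ($\Un$) analog is. The rightmost arrow $\fX_{\Gamma^g}(\SUn)\to\fX_{\Gamma^g}(\Un)$ is itself a vertical, hence immediate from Step 1. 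For the leftmost arrow, the $\Un$-analog is the stabilization $\mathrm{SP}^2(T^2)\to\mathrm{SP}^\infty(T^2)$ (using $\fX_{\bbZ^2}(\Un(n))\isom\mathrm{SP}^n(T^2)$ via simultaneous diagonalization); this is a $\pi_2$-isomorphism because $\mathrm{SP}^2(T^2)$ is a $\bbC\textrm{P}^1$-bundle over $T^2$ whose fiber inclusion generates $\pi_2$, and by Dold--Thom this fiber maps to a generator of $\pi_2\mathrm{SP}^\infty(T^2)\isom H_2(T^2)=\bbZ$.

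\emph{Main obstacle.} The delicate case is the middle $\Un$-arrow $p_g^*\co\fX_{\bbZ^2}(\Un)\to\fX_{\Gamma^g}(\Un)$. The plan is to invoke Ramras's explicit description~\cite{Ramras-stable-moduli} of the generator of $\pi_2\fX_{\Gamma^g}(\Un)\isom\bbZ$ as a ``bubble'' arising from an $\SUn(2)$-commuting pair pinched onto the first handle of $M^g$---that is, as the image of the full composite $\fX_{\bbZ^2}(\SUn(2))\isom S^2\to \fX_{\bbZ^2}(\SUn)\xmaps{p_g^*}\fX_{\Gamma^g}(\SUn)\to\fX_{\Gamma^g}(\Un)$. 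Once this composite is identified as a $\pi_2$-isomorphism, the fact that each intermediate arrow is a homomorphism $\bbZ\to\bbZ$ and the product is $\pm 1$ forces every factor to be $\pm 1$, i.e.\ a $\pi_2$-isomorphism. An alternative route is to use the K-theoretic identifications $\pi_2\fX_{\bbZ^2}(\Un)\isom\wt{K}^0(T^2)\isom\bbZ$ and $\pi_2\fX_{\Gamma^g}(\Un)\isom\wt{K}^0(M^g)\isom\bbZ$ from Ramras's Yang--Mills model, under which $p_g^*$ becomes K-theoretic pullback along the degree-one map $p_g$, hence an isomorphism.
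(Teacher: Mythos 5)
Your treatment of the homotopy equivalence and of the outer two maps is essentially sound and largely parallels the paper. The first part (colimits of homotopy groups, $\pi_k$-isomorphisms for $k\geqs 2$ from Corollary~\ref{pi}, Theorem~\ref{U(n)-thm}, and a $K(\bbZ,2)$ recognition) is exactly the paper's argument, and the rightmost map follows immediately from it. For the leftmost map your route differs mildly: the paper compares the finite stabilizations $\SP^2(T^2)\to\SP^n(T^2)$ fiberwise over the Jacobian, where Lemma~\ref{AJ} identifies the map on Abel--Jacobi fibers with the linear inclusion $\bbC\mathrm{P}^1\hookrightarrow\bbC\mathrm{P}^{n-1}$ and $\pi_{>1}(T^2)=0$ finishes it; you instead pass directly to $\SP^\infty(T^2)$ and invoke Dold--Thom. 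That is a legitimate alternative, but your key claim --- that the Abel--Jacobi fiber $\bbC\mathrm{P}^1\subset\SP^2(T^2)$ maps to a generator of $\pi_2\SP^\infty(T^2)\isom H_2(T^2)$ --- is asserted rather than proved, and it is precisely the point the paper's fiberwise comparison is designed to avoid.

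The genuine gap is the middle map. Your primary plan --- to quote from the stable moduli paper an ``explicit description of the generator of $\pi_2\fX_{\Gamma^g}(\Un)$ as a pinched $\SUn(2)$ bubble'' --- is circular: no such description is available in that reference, and producing exactly this description of the generator is the point of the present theorem (the paper notes immediately after the statement that these maps ``provide explicit generators''). Your fallback K-theoretic route points in the right direction but elides the actual work. There is no direct natural isomorphism $\pi_2\fX_{\Gamma^g}(\Un)\isom \wt{K}^0(M^g)$; rather $\pi_2\Rdef(\Gamma)=\pi_2\fX_\Gamma(\Un)$ sits in Lawson's exact sequence $\pi_0\K(\Gamma)\srm{\beta_*}\pi_2\K(\Gamma)\to\pi_2\Rdef(\Gamma)\to 0$, so one must {\rm(}i{\rm)} show $p_g^*$ is an isomorphism on $\pi_0\K$ (the paper splits through the trivial group), {\rm(}ii{\rm)} show it is an isomorphism on $\pi_2\K\isom\pi_2(\Hom(-,\Un)_{h\Un})$, which requires splitting off $\pi_2(B\Un)$ using the section of $X_{h\Un}\to B\Un$ given by the trivial representation and then comparing $\pi_2\Hom(\Gamma,\Un)$ with $\wt{K}^{-2}(B\Gamma)$ via the theorem that $B\co\Hom(\Gamma^g,\Un)\to\Map_*^0(B\Gamma^g,B\Un)$ is a $\pi_*$-isomorphism in positive degrees, and {\rm(}iii{\rm)} apply the five lemma. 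Only after all of this does the problem reduce to the degree-one statement $\wt{K}^0(M^1)\srm{\isom}\wt{K}^0(M^g)$; as written, your ``hence an isomorphism'' does not follow.
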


Since $\fX_{\Gamma^g} (\SUn(2))$ is homeomorphic to $S^2$ (Example~\ref{SU(2)}), the above maps provide explicit generators for $\pi_2 (\fX_{\Gamma^g} (\SUn))$ and $\pi_2 (\fX_{\Gamma^g} (\Un))$.

We will need a lemma in the proof, and first we fix some notation.  Let $\SP^n (X)$ denote the $n$th symmetric product of the space $X$, namely the quotient of $X^n$ by the natural permutation action of the symmetric group $\Sigma_n$.  Given a basepoint $x\in X$, there are stabilization maps $\SP^n (X) \to \SP^{n+1} (X)$ that send $[(x_1, \ldots, x_n)]$ to $[(x_1, \ldots, x_n, x)]$.  When $X= M^g$ (equipped with a complex structure), the space $\SP^n (M^g)$ is a smooth manifold, which we view as the space of effective divisors on $M^g$ of total degree $n$.

Let  $J(M^g) \isom (S^1)^{2g}$ denote the Jacobian of $M^g$. 
The Abel--Jacobi map can be viewed as a smooth mapping $\mu = \mu_n \co \SP^n (M^g) \to J(M^g)$ (see~\cite{ACGH}, for instance).  Its definition relies on a choice of basepoint $m\in M^g$, and the corresponding stabilization map $\SP^n (M^g) \to \SP^{n+1} (M^g)$ covers the identity map on $J(M^g)$.
The following result is presumably well-known, but we have not found a discussion of these stabilization maps in the literature.

\begin{lemma}$\label{AJ}$ Assuming $n\geqs 2g$,
the Abel--Jacobi map
$\mu\co \SP^n (M^g) \to J(M^g)$ is a fiber bundle with fibers homeomorphic to $\bbC \textrm{P}^{n-g}$.  
The stabilization maps $\SP^n (M^g) \to \SP^{n+1} (M^g)$ restrict to the usual inclusions $\bbC \textrm{P}^{n-g} \injects \bbC \textrm{P}^{n+1-g}$.
\end{lemma}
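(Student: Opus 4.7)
The plan is to exhibit $\mu_n\colon \SP^n(M^g) \to J(M^g)$ as the projectivization of a holomorphic vector bundle on $J(M^g)$. First I would factor $\mu_n$ through the Picard variety as $\SP^n(M^g) \to \mathrm{Pic}^n(M^g) \to J(M^g)$, where the first map sends $D$ to $\mathcal{O}(D)$ and the second is the translation $L \mapsto L\otimes \mathcal{O}(-n[m])$ determined by the basepoint $m$. The fiber over $L\in \mathrm{Pic}^n(M^g)$ is the complete linear system $|L| = \mathbb{P}(H^0(M^g,L))$. Since $n\geqs 2g > 2g-2$, Serre duality gives $H^1(M^g,L) = 0$ for every $L$ of degree $n$, and Riemann--Roch then yields $h^0(M^g,L) = n-g+1$ independent of $L$. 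In particular each fiber is homeomorphic to $\bbC \textrm{P}^{n-g}$.

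To upgrade this to a locally trivial bundle, I would fix a Poincar\'e line bundle $\mathcal{P}$ on $M^g\times \mathrm{Pic}^n(M^g)$ and form the direct image $V := (\pi_2)_* \mathcal{P}$. Cohomology and base change, combined with the uniform vanishing $H^1(M^g,L) = 0$, show that $V$ is a holomorphic vector bundle of rank $n-g+1$ whose fiber at $L$ is $H^0(M^g,L)$. The tautological identification $\SP^n(M^g) \isom \mathbb{P}(V)$ over $\mathrm{Pic}^n(M^g)$ then realizes $\mu_n$ as a projective bundle, and in particular a locally trivial fiber bundle over $J(M^g)$.

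For the stabilization statement, the map $s\colon \SP^n(M^g) \to \SP^{n+1}(M^g)$ sends $D$ to $D+[m]$, corresponding at the level of line bundles to $L\mapsto L\otimes \mathcal{O}([m])$. Under the identifications $\mathrm{Pic}^j(M^g) \isom J(M^g)$ given by $L\mapsto L\otimes \mathcal{O}(-j[m])$, stabilization satisfies $\mu_{n+1}\circ s = \mu_n$, so $s$ is a morphism of fiber bundles over $J(M^g)$. On a fiber over $L$, $s$ becomes the map $\mathbb{P}(H^0(L)) \to \mathbb{P}(H^0(L\otimes \mathcal{O}([m])))$ induced by multiplication by the canonical section $s_m\in H^0(\mathcal{O}([m]))$ vanishing at $m$. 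Twisting the short exact sequence
$$0 \longrightarrow \mathcal{O} \xmaps{\cdot s_m} \mathcal{O}([m]) \longrightarrow \mathcal{O}([m])|_m \longrightarrow 0$$
by $L$ and using $H^1(M^g,L) = 0$ identifies $H^0(L)$ with the hyperplane $\ker(\mathrm{ev}_m) \subset H^0(L\otimes \mathcal{O}([m]))$. Projectivizing yields the standard inclusion of $\bbC \textrm{P}^{n-g}$ as a hyperplane in $\bbC \textrm{P}^{n+1-g}$, as required.

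The main technical hurdle I anticipate is the cohomology-and-base-change step, which is needed to promote the pointwise dimension count to the statement that $V$ is locally free of rank $n-g+1$ (rather than merely coherent with generically constant rank). A secondary issue is the bookkeeping with the basepoint-dependent identifications $\mathrm{Pic}^j \isom J(M^g)$, which must be chosen consistently in $j$ so that the equality $\mu_{n+1}\circ s = \mu_n$ holds on the nose and the fiberwise map is the literal hyperplane embedding rather than just a linear embedding up to automorphism of $\bbC \textrm{P}^{n+1-g}$.
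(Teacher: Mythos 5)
Your argument is correct, and it is an essentially equivalent dictionary translation of what the paper does, written in the language of line bundles and Poincar\'e bundles rather than in the classical language of divisors and meromorphic functions. The paper identifies the fiber over $\mu([c_1,\ldots,c_n])$ with $\mathbb{P}(V_n)$, where $V_n \subset \mathcal{M}(M^g)$ is the space of meromorphic functions whose pole divisor is bounded by $[c_1,\ldots,c_n]$; this $V_n$ is precisely your Riemann--Roch space $H^0(M^g, L)$ for $L = \mathcal{O}([c_1,\ldots,c_n])$, and the paper's inclusion $V_n \subset V_{n+1}$ is your multiplication by $s_m$. The one place the approaches genuinely diverge is local triviality: the paper outsources this to Schwarzenberger and Mattuck, whereas you derive it from cohomology-and-base-change applied to a Poincar\'e bundle on $M^g \times \mathrm{Pic}^n(M^g)$, so you obtain a proof that is more self-contained (and shows the bundle is actually a holomorphic projective bundle, not merely a topological fiber bundle) at the cost of invoking heavier machinery. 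Your bookkeeping concerns about the basepoint-dependent identifications $\mathrm{Pic}^j \isom J(M^g)$ are legitimate but unproblematic: with the translations you specify, $\mu_{n+1}\circ s = \mu_n$ holds exactly, and for the statement of the lemma it is enough that the fiberwise map be a linear hyperplane embedding, since any two such embeddings differ by an automorphism of $\bbC\textrm{P}^{n+1-g}$ and are in particular homeomorphic to the ``usual'' inclusion.
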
  
\begin{proof}   

The ideas involved are classical, and we refer to~\cite{ACGH} for background and details.  As pointed out in~\cite[Section 6]{Schwarzenberger}, the fact that $\mu$ is a fiber bundle follows from the general results in~\cite{Mattuck-picard}.

To describe the fibers of $\mu$, let $\mathcal{M} (M^g)$ denote the complex vector space of meromorphic functions $f\co M^g\to \bbC$.
  Abel's Theorem states that the fiber $\mu^{-1} (x)$ of $\mu$ over a point $x=\mu([c_1, \ldots, c_n])$ is the set
$$ \{[d_1, \ldots, d_n] \in \SP^n (M^g) \,:\,  [d_1, \ldots, d_n]  = [c_1, \ldots, c_n] + (f) \textrm{ for some } f\in \mathcal{M} (M^g)\}.$$
Here $(f)$ is the divisor consisting of the zeros and poles of $f$, and the addition is taken in the set of all divisors on $M^g$.
Letting $V_n\subset  \mathcal{M} (M^g)$ denote the subset of functions $f$ such that $ [c_1, \ldots, c_n] + (f)\in \SP^n (M^g)$, we see that $V_n$ is  the subspace of functions whose poles, with multiplicity, form a sub(multi)set of $[c_1, \ldots, c_n]$.  Moreover, the map $V_n\to \SP^n (M^g)$, $f\goesto  [c_1, \ldots, c_n] + (f)$ induces a homeomorphism $\mathbb{P}(V_n) \srm{\isom} \mu^{-1} (x)$, where $\mathbb{P}(V_n)$ denotes the complex projective space associated to $V_n$ (injectivity of this map comes from the fact that if two meromorphic functions have the same zeros and poles, their ratio is an entire function on the compact Riemann surface $M^g$, and must be constant).  The stabilization map on this fiber agrees with the map $\mathbb{P}(V_n) \to \mathbb{P}(V_{n+1})$ induced by the linear inclusion $V_n \subset V_{n+1}$.

The dimension of $V_n$ can be calculated using the Riemann--Roch theorem.
\end{proof}

\nd \emph{Proof of Theorem~\ref{stable-moduli-thm}.}
 The inclusion maps $\Un(n-1) \injects \Un(n)$ and $\SUn(n-1) \injects \SUn(n)$ all induce injective maps between the corresponding moduli spaces.  In fact, there exist triangulations of $\fX_{\Gamma^g} (\SUn(n))$ and $\fX_{\Gamma^g} (\Un(n))$ with
the images of $\fX_{\Gamma^g} (\SUn(n-1))$ and $\fX_{\Gamma^g} (\Un(n-1))$ as subcomplexes (see, for instance, the proof 
of~\cite[Lemma 5.7]{Ramras-stable-moduli}).
 It follows that there are isomorphisms
$$\pi_*  (\fX_{\Gamma^g} (\SUn)) \isom \colim_{n\to \infty} \pi_* ( \fX_{\Gamma^g} (\SUn(n))) \textrm{ and } 
\pi_*  (\fX_{\Gamma^g} (\Un)) \isom  \colim_{n\to \infty} \pi_* ( \fX_{\Gamma^g} (\Un(n))),$$
and moreover the map
\begin{equation}\label{SU} \pi_* ( \fX_{\Gamma^g} (\SUn) ) \maps \pi_* ( \fX_{\Gamma^g} (\Un) )\end{equation}
induced by the inclusions $\SUn(n)\injects \Un(n)$ is the colimit of the maps 
\begin{equation}\label{SU(n)}\pi_* ( \fX_{\Gamma^g} (\SUn(n))) \maps \pi_*  (\fX_{\Gamma^g} (\Un(n))).\end{equation}
By Corollary~\ref{univ-cover-surface}, we know that $\pi_0 ( \fX_{\Gamma^g} (\SUn(n))) = \pi_1  (\fX_{\Gamma^g} (\SUn(n))) = 0$, and
the maps (\ref{SU(n)}) are isomorphisms for $*\geqs 2$ by Corollary~\ref{pi}.  Passing to the colimits, we find that $\pi_0  (\fX_{\Gamma^g} (\SUn) ) = \pi_1  (\fX_{\Gamma^g} (\SUn) ) = 0$ and that the map (\ref{SU}) is an isomorphism for $*\geqs 2$.  It now follows from Theorem~\ref{U(n)-thm} that the homotopy groups of $\fX_{\Gamma^g} (\SUn)$ are trivial except in dimension 2, where $\pi_2 ( \fX_{\Gamma^g} (\SUn)) \isom \bbZ$.  The argument in~\cite[Lemma 5.7]{Ramras-stable-moduli} shows that $\fX_{\Gamma^g} (\SUn)$ has the homotopy type of a CW complex, so it is an Eilenberg--MacLane space of the form $K(\bbZ, 2)$, as is $\bbC \textrm{P}^\infty$.  Consequently, there exists a homotopy equivalence $\fX_{\Gamma^g} (\SUn)\heq \bbC \textrm{P}^\infty$.

To complete the proof, it suffices to show that the maps
$$\fX_{\bbZ^2} (\SUn(2))\srm{i} \fX_{\bbZ^2} (\SUn) \textrm{ and }  \fX_{\bbZ^2} (\SUn) \srm{p_g^*}  \fX_{\Gamma^g} (\SUn)$$
induce isomorphisms on $\pi_2$.    

First consider the map $i$.
There is a corresponding map $\fX_{\bbZ^2} (\Un(2)) \srm{j} \fX_{\bbZ^2} (\Un)$, and $i$ is the restriction of $j$ to the 
 fibers of the determinant maps $\fX_{\bbZ^2} (\Un(k)) \to \fX_{\bbZ^2} (\Un(1))$.  Under the homeomorphism $\fX_{\bbZ^2} (\Un (n))\srm{\isom} \SP^n (S^1\cross S^1)$ given by 
recording eigenvalues with respect to a simultaneous eigenbasis (see \cite[p. 1084]{Ramras-stable-moduli}, for instance),  the determinant map agrees with the map
$m \co \SP^n (S^1\cross S^1) \to S^1 \cross S^1$ induced by multiplication in the Abelian group $S^1\cross S^1$.  So we will just work with the map $m$.  Note that $m$ is a submersion (since for any Lie group $A$, the multiplication map $A\cross A\to A$ is a submersion) and hence a fiber bundle by Ehresmann's Fibration Theorem.  
The stabilization map $j$ corresponds to the map $\SP^n (S^1\cross S^1) \to \SP^{n+1} (S^1\cross S^1)$ sending
$(a_1, \ldots, a_n)$ to $(a_1, \ldots, a_n, e)$, where $e = (1,1)$ is the identity in $S^1\cross S^1$.  
(In the proof of \cite[Proposition 6.5]{Adem-Cohen-Gomez} it is stated that   $m$ can be identified with the Abel--Jacobi map $\mu$, but we do not need to use this.)
Since $\pi_i (S^1\cross S^1) = 0$ for $*>1$, the resulting diagram of long exact sequence in homotopy identifies the map
$\pi_2 ( \fX_{\bbZ^2} (\SUn(2))) \srm{i_*} \pi_2 (\fX_{\bbZ^2} (\SUn))$ with the map $\pi_2 ( \fX_{\bbZ^2} (\Un(2)) )\srm{j_*} \pi_2 (\fX_{\bbZ^2} (\Un))$.  Similarly, using the Abel--Jacobi maps $\mu_2$ and $\mu_n$ (which, by Lemma~\ref{AJ}, are fibrations with fibers $\bbC \textrm{P}^{1}$ and $\bbC \textrm{P}^{n-1}$, respectively) we obtain an identification of $j_*$ with the map
$\pi_2 (\bbC \textrm{P}^{1}) \to \pi_2 (\bbC \textrm{P}^{\infty})$ induced by the usual inclusion of $\bbC \textrm{P}^{1}\subset \bbC \textrm{P}^{\infty}$.  Since $\bbC \textrm{P}^1$ is the 3--skeleton of $\bbC \textrm{P}^{\infty}$, this map is an isomorphism.  Hence $i_*$ is an isomorphism, as desired.

To complete the proof, we will show that the pinch maps induce isomorphisms
 $\pi_2 (\fX_{\bbZ^2} (\SUn)) \xmaps{(p_g^*)_*}  \pi_2 (\fX_{\Gamma^g} (\SUn))$.  
 This will follow from the corresponding statement  with $\Un$ replaced by $\SUn$, since we have a commutative diagram
$$\xymatrix{ \pi_2 (\fX_{\bbZ^2} (\SUn)) \ar[r]^{(p_g^*)_*} \ar[d]^\isom & \pi_2 (\fX_{\Gamma^g} (\SUn))  \ar[d]^\isom\\
		 \pi_2 (\fX_{\bbZ^2} (\Un)) \ar[r]^{(p_g^*)_*}  & \pi_2 (\fX_{\Gamma^g} (\Un)),}
$$		 
where the isomorphisms come from (\ref{SU(n)}) (note that $\bbZ^2 = \Gamma^1$).
To analyze the map $\fX_{\bbZ^2} (\Un) \srm{p_g^*} \fX_{\Gamma^g} (\Un)$, we will use  facts from stable representation theory~\cite{Lawson-simul, Ramras-excision, Ramras-stable-moduli}.  These results are most easily stated for groups $\Gamma$ whose unitary representation spaces $\Hom(\Gamma, \Un(n))$ are path-connected; note that this condition holds for the trivial group $\{e\}$ and (as discussed in Section~\ref{surface-sec})   
for all  $\Gamma^g$ (including $\bbZ^2 = \Gamma^1$).  
(This condition immediately implies the ``stably group-like" condition discussed in \cite{Ramras-excision, Ramras-stable-moduli}.)
For such groups, the zeroth spaces of the spectra
 $\K (\Gamma)$ and $\Rdef (\Gamma)$ considered in the above references are naturally homotopy equivalent to 
 $\bbZ\cross \Hom(\Gamma, \Un)_{h\Un}$ and $\bbZ\cross \fX_{\Un} (\Gamma)$ (respectively), where 
 $$\Hom(\Gamma, \Un)_{h\Un} := \colim_{n\to\infty} E\Un(n)\cross_{\Un(n)} \Hom(\Gamma^g, \Un(n)),$$
is the colimit  of the \e{homotopy orbit spaces} for the conjugation action of $\Un(n)$ on $\Hom(\Gamma^g, \Un(n))$.
 These homotopy equivalences follow from~\cite[Corollary 4.4]{Ramras-excision} in the case of the spectrum $\K (\Gamma)$ (see also the proof of~\cite[Theorem 5.1]{Ramras-surface}), and from~\cite[Proposition 6.2]{Ramras-surface} in the case of the spectrum $\Rdef (\Gamma)$.  Since  $\K (\Gamma)$ and $\Rdef (\Gamma)$ are connective $\Omega$--spectra, their homotopy groups are naturally isomorphic to the homotopy groups of their zeroth spaces (and in particular, their negative homotopy groups are trivial).  Note that $\pi_0 (\bbZ \cross \Hom(\Gamma, \Un)_{h\Un})$ has a natural addition given by the usual addition in $\bbZ$ and the block sum operation on unitary matrices, and the isomorphism respects this structure \cite[Theorem 3.6]{Ramras-excision}.

 For each finitely generated group $\Gamma$, Lawson constructed a natural long exact sequence ending with
$$\pi_{0} \K (\Gamma) \srm{\beta_*} \pi_{2} \K (\Gamma) \srm{\pi_*} \pi_2 \Rdef (\Gamma) \maps \pi_{-1} \K (\Gamma) = 0.$$
This is the long exact sequence in homotopy resulting from Lawson~\cite[Corollary 4]{Lawson-simul}; note that Lawson uses the notation $R[\Gamma]$ in place of $\Rdef (\Gamma)$.  (Here $\beta_*$ is the Bott map, raising degree by 2, and $\pi_*$ is induced by the quotient maps $\Hom(\Gamma, \Un (n))\to \fX_\Gamma (\Un(n))$.)
The pinch map induces a map between the Lawson exact sequences for $\bbZ^2$ and $\Gamma^g$, and by the 5--lemma it will suffice to show that the induced maps $\pi_{i} \K (\bbZ^2) \xmaps{(p_g^*)_*} \pi_{i} \K (\Gamma^g)$ are isomorphisms for $i = 0, 2$.  

First we consider $\pi_0$. Both $\pi_{0} \K (\Gamma^g)$ and $\pi_{0} \K (\bbZ^2)$  are isomorphic to $\bbZ$, because the zeroth spaces of these spectra have the form $\bbZ \cross \Hom(\Gamma, \Un)_{h\Un}$, and we have observed already that $\Hom(\Gamma, \Un)$ is connected for these groups.  The same reasoning shows that $\pi_{0} \K (\{e\}) \isom \bbZ$.
Now consider the maps 
$\{e\} \to \Gamma^g \to \{e\}$.  These induce maps on $\pi_0 \K (-)$ whose composite is the identity.  Since all the groups in question are infinite cyclic, these maps must be isomorphisms.  From the diagram
$$\xymatrix{& \pi_0 \K (\{e\}) \ar[dr]^\isom \ar[dl]^\isom \\
\pi_0 \K  (\bbZ^2) \ar[rr]^{(p_g^*)_*}  &   & \pi_{0} \K (\Gamma^g) }
$$
we now conclude that $p_g^*$ induces an isomorphism on $\pi_0 \K (-)$.

Now consider the map $\pi_{2} \K (\bbZ^2) \xmaps{(p_g^*)_*} \pi_{2} \K ( \Gamma^g)$.  As discussed above, this map is equivalent to the map $\pi_{2} (\Hom(\bbZ^2, \Un)_{h\Un})\xmaps{(p_g^*)_*} \pi_{2} (\Hom(\Gamma^g, \Un)_{h\Un})$.  
For any $G$--space $X$, the projection map $X_{hG} \to BG = EG/G$ is a fibration with fiber $X$.
 If there exists a point $x\in X$ that is fixed by all $g\in G$, then the map $BG = EG/G\to X_{hG}$, $[b] \goesto [b, x]$, defines a continuous section of this fibration, and hence the boundary maps in the long exact sequence on homotopy are all zero.  This applies to the cases in question, since the trivial representation is fixed by conjugation.
A $G$--equivariant map $X\to Y$ induces a map $X_{hG} \to Y_{hG}$ covering the identity on $BG$. 
Thus $p_g$ induces a diagram of short exact sequences in homotopy 
$$\xymatrix{ 0\ar[r]   & \pi_2 (\Hom(\bbZ^2, \Un)) \ar[r] \ar[d]^{(p_g^*)_*} &  \pi_2 (\Hom(\bbZ^2, \Un)_{h\Un}) \ar[r] \ar[d]^{(p_g^*)_*}
						& \pi_2 (B\Un) \ar[d]^= \ar[r] & 0 \\
	0\ar[r]  &  \pi_2 (\Hom(\Gamma^g, \Un)) \ar[r] &  \pi_2 (\Hom(\Gamma^g,\Un)_{h\Un}) \ar[r] 
						& \pi_2 (B\Un)  \ar[r] & 0,}
$$
so it will suffice to show
that $\pi_{2} ( \Hom(\bbZ^2, \Un))\xmaps{(p_g^*)_*} \pi_{2} (\Hom(\Gamma^g, \Un))$ is an isomorphism.

We will compare this map with (reduced) topological $K$--theory and integral cohomology. 
Let $\Map_*^0 (X, Y)$ denote the space of nullhomotopic based maps from $X$ to $Y$.
 For any finitely generated group $\Gamma$, there is a natural map 
\begin{equation}\label{B}B \co \Hom(\Gamma, \Un) \maps \Map^0_* (B\Gamma, B\Un),\end{equation}
where $B\Gamma$ and $B\Un$ are the (functorial) simplicial models for the classifying spaces of these groups.
 When $\Gamma = \Gamma^g$ ($g\geqs 1$), the map (\ref{B}) induces an isomorphism on homotopy groups in positive dimensions \cite[Theorem 3.4]{Ramras-stable-moduli}.  The homomorphism $(p_g)_* \co \Gamma^g \to \bbZ^2$ induces a map 
$B(p_g)_* \co B\Gamma^g \to B\bbZ^2$, 
so it suffices to check that the induced map
$$\pi_{2} ( \Map_* (B\bbZ^2, B\Un)) \maps \pi_{2} (\Map_* (B\Gamma^g, B\Un))$$
is an isomorphism.
There are natural isomorphisms $\pi_* (\Map_* (X, B\Un)) \isom \wt{K}^{-*} (X)$ for each CW complex $X$ and each $*>0$, so 
it will suffice to show that $B(p_g)_*$ induces an isomorphism on $ \wt{K}^{0} (-) \isom \wt{K}^{-2} (-)$.

The classifying space $B\Gamma$ is an Eilenberg--MacLane space of the form $K(\Gamma, 1)$, and there is a natural isomorphism $\Gamma\srm{\isom} \pi_1 (B\Gamma)$ such that for any group homomorphism $f\co \Gamma\to \Gamma'$, the map $(Bf)_* \co \pi_1 (B\Gamma) \to \pi_1 (B\Gamma')$ agrees with $f$ under these isomorphisms.  This gives us a commutative diagram of groups
$$\xymatrix{ \Gamma^g \ar[rr]^{(p_g)_*} \ar[d]^\isom &\, & \bbZ^2 \ar[d]^\isom\\
		\pi_1 (B\Gamma^g) \ar[rr]^{(B(p_g)_*)_*} &\, & \pi_1 B\bbZ^2,}
$$
and since based homotopy classes of based maps between Eilenberg--MacLane spaces are in 1-1 correspondence with homomorphisms between the underlying homotopy groups \cite[Section 4.3, Exercise 4]{Hatcher}, there exists a corresponding diagram
$$\xymatrix{ M^g \ar[rr]^{p_g} \ar[d]^\isom &\, & M^1 \ar[d]^\isom\\
		 B\Gamma^g  \ar[rr]^{ B(p_g)_*} &\, & B\bbZ^2,}
$$
commuting up to homotopy (note that $M^g$ is an Eilenberg--MacLane space for $g\geqs 1$).  Thus it suffices to show that $\wt{K}^{0} (M^1)\xmaps{p_g^*} \wt{K}^{0} (M^g)$ is an isomorphism.  A complex vector bundle $E$ on a surface is determined up to isomorphism by its first Chern class $c_1 (E)$, and since $c_1 (E\oplus F) = c_1 (E) + c_1 (F)$   we obtain a commutative diagram 
$$\xymatrix{ K^0 (M^1) \ar[r]^{p_g^*} \ar[d]^\isom_{c_1} & K^0 (M^g) \ar[d]^\isom_{c_1} \\
		 H^2 (M^1; \bbZ) \ar[r]^{p_g^*}   & H^2 (M^g ; \bbZ).}
$$
Since $p_g$ can be realized as the map $M^g = (S^1\cross S^1) \# M^{g-1} \to (S^1\cross S^1)$ that collapses $M^{g-1}$ to a point, it sends the fundamental class of $M^g$ to that of $S^1\cross S^1$.  So the induced map $p_g^*$ is an isomorphism on $H^2(-; \bbZ)$, and we are done.
$\hfill \Box$

\begin{remark} The same argument can be used to show that if $f\co M^g\to M^h$ is a degree $d$ map, then the induced map
$\fX_{\Gamma^h} (\SUn) \srm{f^*} \fX_{\Gamma^g} (\SUn)$ has degree $\pm d$ when thought of as a map between Eilenberg--MacLane spaces of the form $K(\bbZ, 2)$.
\end{remark}

It would be interesting to have an explicit  homotopy equivalence $\fX_{\Gamma^g} (\SUn) \to \bbC \textrm{P}^\infty$.  Such a map would correspond to a class in $H^2 (\fX_{\Gamma^g} (\SUn); \bbZ)$, or equivalently to a complex line bundle on $\fX_{\Gamma^g} (\SUn)$.   Goldman~\cite{Goldman-symplectic} constructed a symplectic form $\omega$ on the (Zariski) tangent spaces to the varieties $\fX_{\Gamma^g} (\SUn(n))$.  Restricted to the non-singular locus (the subspace of irreducible representations) this form represents a 2-dimensional de Rham cohomology class $[\omega]$.  Ramadas, Singer, and Weitsman~\cite{RSW} constructed a complex line bundle over $\fX_{\Gamma^g} (\SUn)$ whose first Chern class is a non-zero scalar multiple of $[\omega]$.  These bundles are compatible with stabilization, and hence yield a line bundle $L\to \fX_{\Gamma^g} (\SUn)$.

\begin{question}$\label{RSW}$
Is the classifying map $\fX_{\Gamma^g} (\SUn)\to \bbC \textrm{P}^\infty$ for the Ramadas--Singer--Weitsman line bundle a homotopy equivalence?
\end{question}

We note that since $\fX_{\Gamma^g} (\SUn)\heq \bbC \textrm{P}^\infty$,  it would suffice to check whether this classifying map induces an isomorphism on $H^2 (-; \bbZ)$.  In fact, since the map
$$S^2 \isom \fX_{\bbZ^2} (\SUn(2)) \srm{f} \fX_{\Gamma^g} (\SUn)$$
described in Theorem~\ref{stable-moduli-thm} induces an isomorphism on $\pi_2$, it also induces an isomorphism on $H^2 (-; \bbZ)$.  Thus to answer Question~\ref{RSW}, it would suffice to describe the classifying map for the pullback $f^*(L) \to S^2$; in other words, it would suffice to calculate the first Chern class of this line bundle on $S^2$.  Since $L$ is defined using the holonomy fibration $\mathcal{A}_{\mathrm{flat}}(M^g\cross \SUn (n)) \to \fX_{\Gamma^g} (\SUn (n))$
and the Chern--Simons functional, this calculation appears to be an interesting problem (here 
 $\mathcal{A}_{\mathrm{flat}} (M^g\cross \SUn (n))$ denotes the space of flat connections on the trivial $\SUn (n)$--bundle over $M^g$).\footnote{After we posed this question, it was addressed in \cite{JeRaWe}.}

\section{Appendix by Nan-Kuo Ho \& Chiu-Chu Melissa Liu: \texorpdfstring{\\}{}
Connected components of surface  group representations for complex 
reductive Lie groups }\label{reductivecomponents}

Let $G$ be a connected reductive $\bbC$--group. Then $G$ is the complexification of a compact connected Lie group $K$, i.e., $G=K^\bC$.

Let $H=Z(K)_0$ be the identity component of the center $Z(K)$ of $K$, and let $DK=[K,K]$ be the commutator subgroup of $K$.
Then $H\cong U(1)^m$ is a compact torus of dimension $m\in \bZ_{\geq 0}$,  and $DK$ is the maximal connected
semisimple subgroup of $K$. The map
\[
\phi: H\times DK \rahl K = H \cdot DK,\quad (h,k)\mapsto hk
\]
is a finite cover which is also a group homomorphism; the kernel of $\phi$ is isomorphic to $H\cap DK$, which is a finite Abelian group.
Let $\rho_{ss}: \wt{DK} \rahl DK$ be the universal covering map, which is also a group
homomorphism. Then $\wt{DK}$ is a compact, connected, simply connected, semisimple Lie group, and $\Ker(\rho_{ss})\cong \pi_1(K)$ is
a finite Abelian group.   Let $\fh\cong \bR^m$ be the Lie algebra of $H$ and let $\exp_H:\fh\rahl H$ be
the exponential map.  We have a universal covering map
\[
\rho:~ \tK:=\fh\times\wt{DK}\rahl K=H \cdot DK,\quad (X,k)\mapsto \exp_H(X)\rho_{ss}(k).
\]

Let $H^\bC$, $\fh^\bC$, $DG$, and $\wt{DG}$ be the complexification of $H$, $\fh$, $DK$, and $\wt{DK}$, respectively.
Then $H^\bC\cong (\bC^*)^m$, $\fh^\bC\cong \bC^m$, $DG$ is
a semisimple complex Lie group, and $\wt{DG}$ is a simply connected
semisimple complex Lie group. We have a commutative diagram
$$
\begin{CD}
1 @>>> \Ker(\rho_{ss})  @>>> \wt{DK} @>{\rho_{ss}}>> DK @>>> 1 \\
  && @VV{\cong}V  @V{\tilde{j}}VV  @V{j}VV \\
1 @>>> \Ker(\rho_{ss}^\bC) @>>> \wt{DG} @>{\rho_{ss}^\bC}>> DG @>>> 1
\end{CD}
$$
where
\begin{itemize}
\item the rows are short exact sequence of groups;
\item $\rho_{ss}$ and $\rho_{ss}^\bC$ are universal covering maps;
\item $j$ and $\tilde{j}$ are inclusion maps and homotopy
equivalences;
\item $\Ker(\rho_{ss}^\bC) =\Ker(\rho_{ss})\cong \pi_1(DK)\cong\pi_1(DG)$
\end{itemize}
We have a universal covering map
\begin{equation}\label{eqn:rhoC}
\rho^\bC:~ \tG:=\fh^\bC\times\wt{DG}\rahl G=H^\bC \cdot DG,\quad (X,g)\mapsto
\exp_{H^\bC}(X)\rho^\bC_{ss}(g),
\end{equation}
where $\exp_{H^\bC}: \fh^{\bC}\to H^\bC$ is the exponential map.

Let $\Si$ be a Riemann surface of genus $\ell\geq 2$.
Jun Li showed in \cite{Li-surface-groups} that for a complex connected semissimple Lie group $DG$,
\[
\pi_0(\Hom(\pi_1(\Si),DG))\cong \pi_1(DG).
\]
It is known that for a compact connected Lie group $K$ (cf: \cite{A-B},\cite{Ho-Liu-ctd-comp-II}),
\[
\pi_0(\holK)\cong\pi_1(DK).
\]
In this appendix, we prove the following statement.
\begin{proposition}
Let $G$ be a connected reductive $\bbC$--group, and let $DG$ be its maximal connected
semisimple subgroup. Then
\[
\pi_0(\holG)\cong \pi_1(DG).
\]
\end{proposition}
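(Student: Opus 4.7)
The plan is to exploit the multiplication homomorphism
\[
\phi:H^\bC\times DG\rahl G,\qquad (z,d)\mapsto zd,
\]
whose kernel $N=\{(z,z^{-1}):z\in H^\bC\cap DG\}$ is a finite Abelian group; thus $\phi$ is a covering homomorphism of Lie groups. Since $G$ is algebraic, Lemma~\ref{triang} ensures that the representation varieties in sight are triangulable and in particular locally path connected, so Goldman's Lemma~\ref{Goldman} applies to $\phi_*$.

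First I would show $\phi_*$ is surjective. Given $\rho\in\holG$ and standard generators $a_1,b_1,\dots,a_\ell,b_\ell$ of $\pi_1(\Si)$, write $\rho(a_i)=z_i^a d_i^a$ and $\rho(b_i)=z_i^b d_i^b$ with $z_i^{\bullet}\in H^\bC$ and $d_i^{\bullet}\in DG$. Since $H^\bC\subseteq Z(G)$, we have $[\rho(a_i),\rho(b_i)]=[d_i^a,d_i^b]$, so the surface relation $\prod_i[d_i^a,d_i^b]=1$ automatically holds in $DG$. Hence $(d_i^a,d_i^b)_i$ and $(z_i^a,z_i^b)_i$ define homomorphisms $\pi_1(\Si)\rahl DG$ and $\pi_1(\Si)\rahl H^\bC$ respectively (the second automatic since $H^\bC$ is Abelian), and $\phi_*$ carries their product to $\rho$.

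By Lemma~\ref{Goldman}, $\phi_*$ is then a surjective normal covering with deck group $\Hom(\pi_1(\Si),N)\cong (H^\bC\cap DG)^{2\ell}$, since $N$ is finite Abelian and $\pi_1(\Si)^{\mathrm{Ab}}\cong\bZ^{2\ell}$. As $\Hom(\pi_1(\Si),H^\bC)\cong(H^\bC)^{2\ell}$ is path connected,
\[
\pi_0(\Hom(\pi_1(\Si),H^\bC\times DG))\cong\pi_0(\Hom(\pi_1(\Si),DG))\cong\pi_1(DG),
\]
the last isomorphism being Jun Li's theorem. Therefore $\pi_0(\holG)$ is the quotient of $\pi_1(DG)$ by the induced deck action.

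The crux of the argument, and the main obstacle I anticipate, is to verify that this deck action is trivial. A deck transformation $((\zeta_j,\zeta_j^{-1}))_j\in N^{2\ell}$ acts on $\Hom(\pi_1(\Si),DG)$ by left-multiplying each generator's image by the corresponding $\zeta_j^{-1}$. Using the identification $\pi_0(\Hom(\pi_1(\Si),DG))\cong\ker(\rho_{ss}^\bC)$ via the obstruction class $\prod_i[\tilde{a}_i,\tilde{b}_i]\in\wt{DG}$ for any lifts of the generators, note that each $\zeta_j\in H^\bC\cap DG\subseteq Z(DG)$ lifts to a central element $\tilde{\zeta}_j\in Z(\wt{DG})$, because $(\rho_{ss}^\bC)^{-1}(Z(DG))=Z(\wt{DG})$; indeed, $\ker\rho_{ss}^\bC$ is central in $\wt{DG}$ by Lemma~\ref{central}, so for any $\tilde{\zeta}\in(\rho_{ss}^\bC)^{-1}(Z(DG))$ the commutator map $\tilde{h}\mapsto[\tilde{\zeta},\tilde{h}]$ is a continuous function from the connected group $\wt{DG}$ to the discrete group $\ker\rho_{ss}^\bC$, hence constantly $e$. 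By centrality of $\tilde{\zeta}_j$, $[\tilde{\zeta}_{2i-1}^{-1}\tilde{a}_i,\tilde{\zeta}_{2i}^{-1}\tilde{b}_i]=[\tilde{a}_i,\tilde{b}_i]$, so the obstruction class is unchanged. Hence the deck action is trivial, yielding $\pi_0(\holG)\cong\pi_1(DG)$.
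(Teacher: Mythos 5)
Your proof is correct, but takes a genuinely different route from the Appendix. Ho and Liu work directly with the universal cover $\tG=\fh^\bC\times\wt{DG}$ and the obstruction invariant $O_2\co\holG\to\Ker(\rho_{ss}^\bC)$, showing each fiber is nonempty (by Li's theorem for $\Hom(\pi_1(\Si),DG)$) and path-connected; the connectedness comes from an explicit path $\gamma(t)$ that scales the $\fh^\bC$-component of each generator to zero, joining an arbitrary point within its fiber to the subspace $\Hom(\pi_1(\Si),DG)$. You instead pass through the \emph{finite} covering $\phi\co H^\bC\times DG\to G$, apply Goldman's Lemma~\ref{Goldman} to make $\phi_*$ a surjective normal covering, and show the deck action of $N^{2\ell}$ on $\pi_0(\Hom(\pi_1(\Si),DG))\cong\pi_1(DG)$ is trivial because $H^\bC\cap DG\subseteq Z(DG)$ lifts into $Z(\wt{DG})$. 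This keeps the argument inside the covering-space philosophy of the paper's main theorem, while (correctly) relying only on Lemma~\ref{Goldman} rather than on Theorem~\ref{prop-discont}, since $\pi_1(G)$ need not be torsion-free here. What the paper's proof buys is brevity: the explicit path avoids any analysis of a deck action. What yours buys is a structural explanation---the proposition becomes the assertion that $\phi_*$ induces a bijection on $\pi_0$, with the nontrivial content isolated in the centrality fact $(\rho_{ss}^\bC)^{-1}(Z(DG))\subseteq Z(\wt{DG})$. Both proofs reduce to Li's theorem on the semisimple factor.
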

\begin{proof}
Define a map $O_2:\holG\rahl \tG$ by $(\ab)\in G^{2\ell}\mapsto \tabab$
where $\ta_i$, $\tb_i$ are any liftings of  $a_i$, $b_i$ under
the universal covering $\rho^\bC:\tG\to G$.
Notice that $\tabab$ is independent of the choice of the liftings
$\ta_i$, $\tb_i$, so this map is well-defined. Moreover,
$\tabab \in \Ker(\rho^\bC) \cap (\{0\}\times \wt{DG}) = \{0\}\times \Ker(\rho_{ss}^\bC)
\cong \pi_1(DG)$. We obtain a map
$$
O_2: \holG\to  \{0\}\times \Ker(\rho_{ss}^\bC)\cong \pi_1(DG)
$$
which restricts to a map
$$
O'_2:\holGss \to \{0\}\times \Ker(\rho_{ss}^\bC)\cong \pi_1(DG)
$$
By the results in \cite{Li-surface-groups}, for each $(0,k)\in\{0\}\times \Ker(\rho_{ss}^\bC)$,
$(O'_2)^{-1}(0,k)$ is nonempty and connected. Since
$(O'_2)^{-1}(0,k)\subset O_2^{-1}(0,k)$, $O_2^{-1}(0,k)$ is nonempty
for each $k\in \Ker(\rho^\bC_{ss})$. It remains to show that
$$
O_2^{-1}(0,k)=\{(\ab)\in G^{2\ell},\tabab=(0,k)\}
$$
is connected for each $k\in \Ker(\rho_{ss}^\bC)$. Since $(O_2')^{-1}(0,k)$
is connected, it suffices to prove that, for any $(\ab)\in O_2^{-1}(0,k)$, there exists a path $\gamma:[0,1]\to O_2^{-1}(0,k)$ such that
$\gamma(0)\in (O_2')^{-1}(0,k)$ and $\gamma(1)=(\ab)$.

Given $(\ab)\in O_2^{-1}(0,k)$,
write $a_i=\exp(\xi_i) a_i'$, $b_i=\exp(\eta_i) b_i'$, where $\xi_i,~\eta_i\in\fh^\bC$
and $a'_i,~b'_i \in DG$. Define a path $\gamma: [0,1]\to G^{2\ell}$ by
$\gamma(t)=(a_1(t),b_1(t),\ldots, a_\ell(t), b_\ell(t))$, where
$a_i(t)=\exp(t\xi_i)a'_i$ and $b_i(t)=\exp(t\eta_i) b'_i$.
Then $\gamma(t)\in O_2^{-1}(0,k)$ since $\prod_{i=1}^{\ell}[\ta_i(t),\tb_i(t)]=\prod_{i=1}^\ell[\ta_i,\tb_i]=(0,k)$. Moreover,
$a_i(0)=a_i'$, $b_i(0)=b_i'$ and $a_i(1)=a_i$, $b_i(1)=b_i$, so $\gamma(0)\in (O_2')^{-1}(0,k)$ and $\gamma(1)=(\ab)$. 
Thus, any point in $O_2^{-1}(0,k)$ is path connected to the set $(O_2')^{-1}(0,k)$. This completes the proof of the proposition.
\end{proof}

\begin{remark}
In the above proof, the choices of $(\xi_i,a_i'),~(\eta_i,b_i')\in \fh^\bC\times DG$ are not unique, 
but it only means that we have a different path $\gamma(t)$ connecting the point $(\ab)$ to some 
{\rm(}possibly different{\rm)} point $(a_1',b_1',\ldots, a_\ell',b_\ell')$ in the set $(O'_2)^{-1}(0,k)$.
\end{remark}

\end{document}